\documentclass[11pt]{amsart}
\usepackage{amssymb,amsthm,amsmath,amstext}
\usepackage{graphicx}
\usepackage{mathrsfs}
\usepackage{bm}
\usepackage{multirow}
\usepackage[all]{xy}
\usepackage{tikz-cd}
\usepackage{xfrac}
\usepackage{anyfontsize}
\usepackage{longtable}
\usepackage{ulem}

\usepackage[left=1.25in,top=1.35in,right=1.25in,bottom=1.35in]{geometry}

\usepackage{color}

\theoremstyle{plain}
\newtheorem{thm}{Theorem}\newtheorem*{thm*}{Theorem}
\newtheorem*{conj*}{Conjecture}

\newtheorem{pr}[thm]{Proposition}
\newtheorem{lem}[thm]{Lemma}
\newtheorem{cor}[thm]{Corollary}

\theoremstyle{remark}

\theoremstyle{definition}

\newtheorem{exam}[thm]{Example}

\newtheorem{rem}[thm]{Remark}

\newcommand{\CC}{\mathbb{C}}

\newcommand{\PP}{\mathbb{P}}
\newcommand{\GG}{\mathbb{G}}

\newcommand{\reg}{\operatorname{reg}}
\newcommand{\tth}{\operatorname{th}}
\newcommand{\Sing}{\operatorname{Sing}}
\newcommand{\codim}{\operatorname{codim}}
\newcommand{\mult}{\operatorname{mult}}
\newcommand{\Ver}{\operatorname{Vert}}
\newcommand{\Bs}{\operatorname{Bs}}
\newcommand{\red}{\operatorname{red}}

\newcommand{\inv}{{-1}}
\newcommand{\rk}{\operatorname{rk}}
\newcommand{\HO}{\operatorname{H}^0}
\newcommand{\Hom}{\operatorname{Hom}}

\newcommand{\SSx}{\operatorname{II}_{x,X}}
\newcommand{\Sc}{\operatorname{Sc}}
\newcommand{\map}{\dasharrow}
\newcommand{\Pic}{\operatorname{Pic}}
\newcommand{\vvert}{\operatorname{Vert}}
\newcommand{\length}{\operatorname{length}}
\newcommand{\Tan}{\operatorname{Tan}}
\newcommand{\Sec}{\operatorname{Sec}}
\newcommand{\Bl}{\operatorname{Bl}}
\newcommand{\tht}{\operatorname{th}}
\usepackage[backref=page]{hyperref}
\hypersetup{pdftitle={}}
\hypersetup{pdfauthor={}}
\hypersetup{colorlinks=true,linkcolor=blue,anchorcolor=blue,citecolor=blue}

\begin{document}

\title[On the tangent degree and the degree of the tangent  variety]{On the tangent degree and the degree of the tangent variety of a projective variety}
\author[J. Hernandez Gomez]{Jordi Hernandez Gomez}
\author[F. Russo]{Francesco Russo*}
\address{Dipartimento di Matematica e Informatica, Universit\` a degli Studi di Catania, 
Viale A. Doria 5, 95125 Catania, Italy
}
\email{francesco.russo@unict.it,  jordi.emanuel.hernandez.gomez@gmail.com}
\thanks{*Both authors were partially  supported  by the PRIN 2022 {\it Birational geometry of moduli spaces and special varieties}. The second author is a member of the G.N.S.A.G.A. of INDAM}

\subjclass[2020]{Primary 14N05; Secondary 14C20, 14J25, 14N15}
 
\keywords{Tangent varieties, tangent degree, focal loci, Roth varieties}

\maketitle
 \begin{center}{\it Dedicated to Ciro Ciliberto on the 
occasion of his $75^{\tht}$ birthday}
\end{center}

\begin{abstract}
The tangent degree $\tau(X)$ of a projective variety $X^n\subset\PP^N$ is the number of tangent spaces  to $X$ at smooth points 
passing through a general point of the tangent variety $\Tan(X)\subseteq\PP^N$, if positive and finite; it is equal to zero if $\dim(\Tan(X))<2n$. 

In this paper we focus on  general properties of $\tau(X)$ and of $\deg(\Tan(X))$. 
For example $\tau(X)\neq 1$ if $N=2n$ and, as soon as $\Tan(X)$ does not coincide with the secant variety, we prove a linear lower bound for the degree of $\Tan(X)$ in terms of its codimension in the spirit of \cite{Ciliberto.Russo.2006}. Then we consider the cases in which the previous two invariants attain the lower bounds found here, either in small dimension/codimension and/or under the smoothness assumption. Finally for $N\geq 2n+1$ we consider varieties
$X^n\subset\PP^N$ having $\tau(X)>1$ and provide their classification in small dimension.

\end{abstract}
\section*{Introduction}

The starting point of the topics we consider  in this paper is the higher dimensional generalization of the following elementary facts: if $X\subset\PP^2$ is an irreducible curve, then 
$\tau(X)\geq 2$ if $\deg(X)\geq 2$ and $\tau(X)=2$ if and only if $X$ is a conic. 
We first prove that  $\tau(X)\neq 1$ for an irreducible variety $X^n\subset\PP^{2n}$ and that if $\tau(X)=2$, then $X$ is  birational to its general tangent space via an  explicit map (see Theorem \ref{If n>=3 then tau(X)>=2} and Lemma \ref{tau(X)=2 implies X rational}) and hence it is rational. Let us remark that if  the singular locus of $X^n\subset\PP^{2n}$ consists of at most a finite number of {\it nodes}, then $\tau(X)\neq 1$ can be deduced  in various ways. For example, under these hypothesis Severi's Double Point Formula in \eqref{Sev:formula} implies that $\tau(X)$ is even  (see also Remark \ref{Johnsontau}), which is clearly false for arbitrary singular varieties. This is analogous  to the case of plane curves, where Pl\" ucker's  Formulas imply that $\tau(X)$ is even for curves having  only nodes as singularities. Then we concentrate on the classification of  varieties $X^n\subset\PP^{2n}$ with $\tau(X)=2$ and $n\geq 2$. After showing the existence of (singular) rational scrolls in $\PP^4$ of arbitrary large degree having $\tau=2$ (see Example \ref{Verra}), we construct other (singular) examples of arbitrary large dimension and degree, pointing out interesting connections with notable classes of algebraic varieties like $OADP$-varieties, $QEL$-varieties of type $\delta=1$ or twisted cubics over Jordan algebras (see Examples \ref{QELex} and \ref{X333}).

We prove   that smooth varieties $X^n\subset\PP^{2n}$ with $\tau(X)=2$ are $QEL$-varieties of type $\delta=1$ and viceversa (see Proposition \ref{smoothtau2}) and conclude that these varieties are either
smooth rational normal scrolls $S(1,\ldots, 1,2)\subset\PP^{2n} $ or odd--dimensional Fano manifolds $X^{2m+1}\subset\PP^{4m+2}$, $m\geq 1$, with $\Pic(X)\simeq\mathbb Z\langle\mathcal O_X(1)\rangle$
and of index $m+1$ (the latter exist at least for  $m=1,2$). In particular, the unique even--dimensional smooth varieties $X^{2m}\subset\PP^{4m}$ with $\tau(X)=2$ are rational normal scrolls $S(1,\ldots, 1,2)$ (see Corollary \ref{classtau2smooth}).
We then prove that irreducible surfaces $X^2\subset\PP^4$ with $\tau(X)=2$ are rational scrolls, which can have arbitrary large degree  if they are singular (see Theorem \ref{class:omega_2=2}).

 In the last sections we focus on the degree of the tangent variety $\Tan(X)$ of a  variety $X^n\subset\PP^N$, $N\geq 2n+1$, and specifically  on the cases in which $\tau(X)\geq 2$. If $\Tan(X)$ coincides with the secant variety $\Sec(X)$, then  \cite[Theorem 4.2]{Ciliberto.Russo.2006} (see Theorem \ref{Lower bound on deg(SX))}) provides a lower bound for $\deg(\Tan(X))$ which is quadratic in the codimension of $\Tan(X)$ in $\PP^N$. If $\Tan(X)\subsetneq\Sec(X)$, we show that   $\deg(\Tan(X))\geq 2(N-\dim(\Tan(X))+1)$ (see Theorem \ref{Lower bound on deg(TX) when TX is not SX)}) and that this  lower bound is sharp (see Example \ref{ex:minimalTan} and  also Remark \ref{lowTan}).

We proceed  with the construction of non-developable $X^n\subset \PP^N$, $N\geq 2n+1$, contained in a developable scroll in planes $Y^{n+1}\subset\PP^N$, showing that there exist $X^n\subset Y^{n+1}\subset\PP^N$ with $\tau(X)$ arbitrary large. In most cases such varieties are singular with the notable exceptions of the so called {\it Roth varieties}, for which $Y=S(L,Z)$ is a cone with vertex a line $L$ over a smooth rational normal scroll $Z^{n-2}\subset\PP^{N-2}$ (see Theorem \ref{Roth:Ilic}).
Surfaces $X^2\subset \PP^N$, $N\geq 5$, with $\tau(X)\geq 2$ are classified in Theorem \ref{class:surf:taugeq2}, proving that either $X$ is a Veronese surface in $\PP^5$ or $X^2\subset Y^3\subset\PP^N$ 
with $Y^3$ a developable scroll in planes over a curve such that the general plane  intersects $X$ in a curve of degree at least two. In the smooth case the previous result is refined in Corollary \ref{smooth:taugeq2}, showing that there exist only two classes of examples: the Veronese surface in $\PP^2$ ($\tau=2$) and Roth surfaces  $X^2\subset S(L,Z)\subset \PP^N$ of degree $b(N-2)+1$ for some $b\geq 2$  and having $\tau=b^2$. 

The main result of  \cite{Ilic.1998} is a characterization of Roth varieties  as the unique smooth linearly normal varieties
$X^n\subset \PP^N$, $N\geq n+2$, such that the double divisor of a general projection onto $\PP^{n+1}$ is not ample. Mumford proved  that these divisors vary in a base-point free linear system (see \cite[Introduction]{Ilic.1998}). Corollary \ref{smooth:taugeq2} provides a different geometric characterization of Roth surfaces, leading to the natural question if this is also true for  smooth linearly normal non secant--defective varieties $X^n\subset\PP^N$ with  $N\geq 2n+1$ and $n\geq 3$.
As far as we know this problem has not been previously addressed in the literature (neither for $n=2$).

In the first section we present a general introduction to the theory of foci of the family of tangent spaces to an algebraic variety and we collect various facts regarding the second fundamental form of an algebraic variety and developable varieties. Since  these results are usually dispersed in the literature we thought that it would be  useful to present them in an unitary and concise way, 
without any pretension of completeness nor of originality and trying to quote explicitly all the references we are aware of. This  vast area of research, lying at the crossroads of algebraic and differential geometry, has  many long-standing contributions so  we apologize in advance if we forgot some of them.

{\bf Acknowledgements.} The second author is grateful to Anand Patel, who a long time ago asked to him about the existence of smooth surfaces in $\PP^N$ with $\tau\geq 2$, $N\geq 5$  and different from the Veronese surface. This question led to Theorem \ref{class:surf:taugeq2} and  Corollary \ref{smooth:taugeq2}, which have been the starting motivations of this paper. He also wishes to thank Ciro Ciliberto and Luca Chiantini for some discussions about the above question, which appear now as some of the preliminary results in the last section.

\section{Preliminaries}
Let $X\subset \PP^N$ be a quasi projective variety of dimension $n\geq1$, that we assume to be irreducible, unless otherwise stated, and defined over $\mathbb C$. For any $x\in X$ we denote by $T_x X\subseteq \PP^N$ the projective tangent space of $X$ at $x$ and by $t_xX$ the Zariski tangent space to $X$ at $x$. The locus of smooth points of $X$ is the open non-empty subset $X_{\reg}\subseteq X$.

\subsection{Foci of families of linear spaces in $\PP^N$}

Let $V$ be a smooth quasi-projective variety of dimension $n\geq 1$ and let 
\begin{equation} \begin{tikzcd}\label{fam:lin:n:r}
	{\mathcal V} & { \PP^N} \\
	{V}
	\arrow["q", from=1-1, to=1-2]
	\arrow["p"', from=1-1, to=2-1]
\end{tikzcd}
\end{equation}
be an  {\it $n$-dimensional family of linear spaces of dimension $r\geq 1$ in $\PP^N$}. This means that  $p:\mathcal V\to V$
is a smooth projective morphism
such that $p^\inv(v)\simeq\PP^r$ for every $v\in V$ and that $q(p^\inv(v))=\PP^r_v\subset\PP^N$ is a linear subspace
of $\PP^N$ of dimension $r$ for every $v\in V$. The morphism $q$ is called the {\it tautological morphism} of the family $p:\mathcal V\to V$.
It is important to remark that we are not assuming that the morphism $f:V\to \mathbb G(r,N)$ induced by the universal property
of the Hilbert scheme is injective.

A point $s\in \mathcal V$ (or $q(s)\in\PP^N$) is called a {\it focal point} or a {\it focus}, for short, of  $p:\mathcal V\to V$ if the differential of $q$
at $s$ is not injective. If $p(s)=v$, we shall sometimes use the notation $s\in\PP^r_v$.

A point $s\in q(\mathcal V)\subset\PP^N$ is called a {\it fundamental point} of $p:\mathcal V\to V$ if $\dim(q^\inv(s))>0$ (in classical language
one says that through $s$ there pass infinitely many elements of the family). Clearly, a fundamental point is a focal
point but the converse is not true in general.

If $\dim(q(\mathcal V))<\dim(\mathcal V)=n+r$, then every $s\in \mathcal V$ is a focus. In this case the notion is not interesting
and such families will be called {\it degenerated}.
From now on we shall suppose that the family $p:\mathcal V\to V$ is non-degenerate, i.e. that $\dim(q(\mathcal V))=\dim(\mathcal V)=n+r$ (hence necessarily  $N\geq n+r$). 

By definition the quasi-projective variety $\mathcal V$ is smooth and irreducible. By applying generic smoothness to $q:\mathcal V\to\PP^N$, the non-degenerate assumption assures that the ramification locus of $q$ does not contain a general $\PP^r_v=p^\inv(v)$. The set of focal points in $\mathcal V$, which will be also called the focal locus, is a closed Zariski subset and, under our hypothesis, for $v\in V$ general it intersects  $\PP^r_v$ in a proper closed subset.

The characteristic map of $p:\mathcal V\to V$ at $v\in V$  is  the differential of the induced map $f:V\to \mathbb G(r,N)$ at $v$:
$$(df)_v: t_vV\to \HO(N_{\PP^r_v/\PP^N})\simeq t_{[\PP^r_v]}\mathbb G(r,N),$$
where $N_{\PP^r_v/\PP^N}$ is the normal bundle of $\PP^r_v$ in $\PP^N$.
The globalization of this map to $\mathcal V\subset V\times \PP^N$
 induces  a morphism of sheaves on $\PP^r_v$ by restriction to the fiber $\PP^r_v$:
\begin{equation}\label{charmor}
\chi(v):t_vV\otimes \mathcal O_{\PP^r_v}\to N_{\PP^r_v/\PP^N}\simeq \mathcal O_{\PP^r_v}(1)^{N-r},
\end{equation}
which is injective for $v\in V$ general by the non-degenerate assumption (see \cite{Ciliberto.Sernesi.2010} for details). Since $N\geq n+r$, then $N-r\geq n$ and it is not difficult to see that $s\in \PP^r_v$ is a focus of $p:\mathcal V\to V$ if and only if  the induced morphism $\chi(v)_s: k(s)^n\to k(s)^{N-r}$ satisfies $\rk(\chi(v)_s)< n$ (see also \cite{Ciliberto.Sernesi.2010} and  \cite[\S 2.2.4]{Fischer.Piontkowski.Book2001}, revisiting in modern terms C. Segre's approach in \cite{Segre1}). In particular for $N-r=n$, or equivalently for $\overline{q(\mathcal V)}=\PP^N$, the locus of focal points in $\PP_v^r$ is the support of a hypersurface of degree $n$.

 The map  $p:\mathcal V\to V$ is, up to shrinking $V$,  locally trivial  so that we can suppose that for  any  $v\in V$ and  $s\in \PP^r_v$ we have 
 \begin{equation}\label{t-splitting}
 t_s\mathcal V\simeq t_vV\times t_s\PP^r_v.
 \end{equation} 
 Since $q$ restricted to $\PP^r_v$ is an isomorphism,   $s\in\PP^r_v$ is a focus if and only if there exists a non zero vector $(\mathbf w,\mathbf 0)\in t_s\mathcal V$ such that  $(dq)_s(\mathbf w,\mathbf 0)=\mathbf 0$. 
 \medskip
  
In the sequel we shall  explain in which sense, according to the classical terminology used by C. Segre, a focus $s\in\PP^r_v$ as above lies  in an infinitely near $r$-plane  of the family $p$  in the direction $\mathbf w\in t_vV$. We shall start with the easiest example, well-known in elementary differential geometry but  useful to understand some 
non-standard definitions introduced later.

Let $\mathcal L(A)$ denote the linear subspace generated by a set $A$ in a vector space and let us denote by $\langle A\rangle\subseteq\PP^N$  the linear span of a
subset $A\subset\PP^N$. If $A=\{a_1,a_2\}$, then  $\langle a_1,a_2\rangle:=\langle\{a_1,a_2\}\rangle$.

\begin{exam}\label{dev:surf} Let $p:\mathcal S\to C$ be a non-degenerate one-dimensional family of lines in $\PP^N$, $N\geq 3$,  over a smooth curve $C$ and let $q:\mathcal S\to S\subset\PP^N$
be the associate tautological morphism, which we suppose to be  birational onto its  image $S$ (this implies that  the associated map $f:C\to \widetilde C\subset \mathbb G(1,N)$ is also birational onto its image $\widetilde C$).
Let $c_0\in C$ be a general point and let $l_0=\PP^1_{c_0}\subset\PP^N$ be the corresponding line. We shall study  $S$ locally around a (smooth) point $s_0\in l_0\subset S\subset \PP^N$.

To this aim we can suppose that $S$ has, locally around $s_0\in l_0$, a parametrization $\mathbf x:\Delta\to \mathbb C^{N+1}$, $\Delta\subset\mathbb C^2$,  of the the form
$$\mathbf x(u,v)=a(u)+v\cdot  b(u),$$
i.e. we are supposing that $u$ is a coordinate in a neighbourhood of $c_0$ centred at $0\in C$ and that $l_u=\PP(\mathcal L(a(u),b(u)))\subset\PP^N$ are the lines near $l_0$ in $S$ (the value $v=\infty$ corresponds to the point at infinity of $l_u$).

The projective tangent plane to $S\subset\PP^N$ at a  general point $\PP(\mathbf x(0, v))$ is  
$$T_{(0,v )}S=\PP(\mathcal L(a(0)+ v\cdot b(0), b(0), a^\prime(0)+v\cdot b^\prime(0))),$$ which clearly coincides with $\PP(\mathcal L(a(0), b(0), a^\prime(0)+v\cdot b^\prime(0)))$. 

The ruled surface $S$ is said to be {\it developable along $l_0$} if the tangent space at the smooth points of $S$ is constant along the smooth points on $l_0$, i.e. if it does not depend on $v$. This is clearly equivalent to the condition $\dim(\mathcal L(a(0),b(0),a^\prime(0),b^\prime(0)))=3$. The case $\dim(\mathcal L(a(0),b(0),a^\prime(0),b^\prime(0)))=2$ is excluded because otherwise $q$ would not be non-degenerate, contradicting our hypothesis.

If  $\dim(\mathcal L(a(0),b(0),a^\prime(0),b^\prime(0)))=4$, it is not difficult to see that in this case for all $v\in\mathbb C\cup\infty$ the differential $(dq)_{(0,v)}:t_{(0,v)}\mathcal S\to t_{(0,v)}S$ is an isomorphism because $$a^\prime(0)+v\cdot b^\prime(0)\not\in\mathcal L(a(0),b(0))$$ for every $v\in \mathbb C\cup\infty$.

If  $\dim(\mathcal L(a(0),b(0),a^\prime(0),b^\prime(0)))=3,$ then there exists a unique $\overline v\in \mathbb C\cup \infty$ such that $\dim(\mathcal L(a(0),b(0),a^\prime(0)+\overline v\cdot b^\prime(0)))=2$. In this case there exists a unique focus $s=(0,\overline v)\in l_0$.
If the surface  $S$ is developable along a general line $l_c=\PP^1_c\subset S$, $c\in C$, then there exists a unique focus $s_c\in l_c$. If the focus $s_c$ does not move, then  the lines $l_c$  pass through a fixed point $v_0\in S$ and $S$ is a cone. If the focus $s_c$ moves, then the points $s_c$ describe the so-called {\it edge of regression}
of the developable ruled surface $S\subset\PP^N$. In both cases   $s_c$ is a singular point of $S$ for any $c\in C$. 

If the surface $S\subset\PP^N$ is developable along a general line $l_c$, then the surface is called developable and the corresponding  one-dimensional family of lines $p:\mathcal S\to C$ will be called  {\it developable}.

The previous condition of developability has nice geometrical interpretations, one of them  due to Max N\" other. 

Let hypothesis be as above. 
Up to shrinking $C$, we can suppose that $\widetilde{C}=f(C)\subset \mathbb G(1,N)$
is smooth and  that $f(c_0)=[l_0]$ is a non-singular point of $\widetilde C$.  Let us recall that 

$$t_{[l_0]}\mathbb G(1,N)\simeq \Hom(\mathcal L(a(0),b(0)), \frac{\mathbb C^{N+1}}{\mathcal L(a(0),b(0))}).$$

Then it is not difficult to realize that  $t_{[l_0]}\widetilde C$ corresponds to the non-zero  homomorphism
$$\alpha\cdot a(0)+\beta\cdot b(0)\mapsto [\alpha\cdot a^\prime(0)+\beta\cdot b^\prime(0)].$$

Hence $S$ is developable along $l_0$ if and only if $t_{[l_0]}\widetilde C$ is not injective. In this case the focus on $l_0$ corresponds to the unique $\overline v\in\mathbb C\cup \infty$  determined by  $\PP(\ker(t_{[l_0]}\widetilde C)).$ We shall generalize  this example in \S \ref{dev:fam}.
\end{exam}

\subsection{Family of tangent spaces to a variety in $\PP^N$ and tangent variety}\label{subss:famtang}

Let  $${T^\circ_X}:=\{(x,s)\in  X_{\reg}\times \PP^N : s\in T_x X\}\subset X_{\reg}\times \PP^N$$ 
and let $$T_X:=\overline{T_X^\circ}\subseteq X\times \PP^N.$$ We have the following diagrams:
\begin{equation}\label{tanfam}
\begin{tikzcd}
	{T^\circ_X} & { \PP^N} \\
	{X}_{\reg}
	\arrow["q", from=1-1, to=1-2]
	\arrow["p"', from=1-1, to=2-1]
\end{tikzcd}
\end{equation}
where $p$ and $q$  are the restrictions of the projections onto the first and second factor, respectively. Analogously we have  $\overline p:T_X\to X$ and
$\overline q:T_X\to\PP^N$.  The {\it tangent variety of $X$} is  
$$\Tan(X):=\overline q(T_X) =\overline{\bigcup_{x\in X_{\reg}}T_x X}\subseteq \PP^N,$$
while  $p:T_X^\circ\to X_{\reg}$ is called the {\it family of tangent  spaces to $X\subset\PP^N$}.  Clearly $\Tan(X)$ is irreducible and $\dim(\Tan(X))\leq\min\{2n,N\}$.
The {\it secant variety to $X\subset\PP^N$} is defined as
$$\Sec(X):=\overline{\bigcup_{x_1\neq x_2\, ,\, x_i\in X}<x_1,x_2>}\subseteq \PP^N.$$
Then $\Sec(X)\subseteq\PP^N$ is  irreducible  and $\dim(\Sec(X))\leq\min\{2n+1,N\}$. Moreover, since a  line tangent at $x\in X_{\reg}$ is a limit of secant lines $\langle x_1, x_2\rangle$ with $x_i\to x$, we have 
the inclusion $\Tan(X)\subseteq \Sec(X)$.

The  family $p:T_X\to X$ induces the morphism $f:X_{\reg}\to \mathbb G(n,N)$  given by the rule $f(x)=[T_xX]$ and thus it  defines  the so-called {\it Gauss map of $X$}: 
 $$\mathcal{G}_X : X\dashrightarrow \GG(n,N).$$ 
 Let   $\widetilde{X}=\overline{{\mathcal G}_X(X_{\reg})}\subset \mathbb G(n,N)$ be the {\it Gauss image of $X$}.

 Letting  $\widetilde{T}:=\PP({\mathcal{S}}_{|\widetilde{X}})$ with $\mathcal{S}$ the universal locally free sheaf of rank $n+1$ over $\GG(n,N)$, we have
 the  diagram 
\begin{equation}\label{Tuniv}
\begin{tikzcd}
	{\widetilde{T}} & {\PP^N} \\
	{\widetilde{X}}
	\arrow["\widetilde{q}", from=1-1, to=1-2]
	\arrow["\widetilde{p}"', from=1-1, to=2-1]
\end{tikzcd},
\end{equation}
  where $\widetilde{p}: \widetilde{T}\to\widetilde{X}$ is the projective bundle morphism and $\widetilde{q}:\widetilde{T}\to\PP^N$ is the tautological morphism . Since $\Tan(X)=\widetilde{q}(\widetilde{T})\subseteq \PP^N$, we get the finer estimate $\dim(\Tan(X))\leq\dim({\widetilde X})+n$.

A variety $X\subset\PP^N$ of dimension $n\geq 1$  is said to be {\it developable} if $\dim(\widetilde{X})<\dim(X)$.

We  collect a series of well-known results/remarks for further reference.

\begin{rem}
Let  $\varrho=\varrho(X):=\dim(X)-\dim(\widetilde{X})\geq 0$ be the {\it Gauss defect of $X$}. It is well-known that the closure of a general fiber of $\mathcal{G}_X$ is a linear space $\PP^\varrho\subset X\subset\PP^N$ (see for example \cite[Theorem~1.5.10]{Russo.Book2016}). In particular, either $\mathcal{G}_X:X\dashrightarrow \widetilde{X}$ is birational or $X$ is developable and a general tangent space is constant along an open subset of a linear space of dimension $\varrho$. For a non-developable variety $X\subset\PP^N$, the birationality onto the image of $\mathcal G_X$ assures that  the two families $\overline p:T_X\to X$  and $\widetilde{p}:\widetilde{T}\to \widetilde X$ coincide in a neighbourhood  of a general point $x\in X$. 

If $U\subseteq X_{\reg}$ is isomorphic to its image $\widetilde U\subseteq\widetilde X$
via $\mathcal G_X$, then the family of tangent spaces is locally trivial over $U$, being isomorphic over $U$ to  $\PP(\mathcal G_X^*({\mathcal{S}}_{|\widetilde{U}}))\to U$. For smooth varieties $X\subset\PP^N$ the locally free sheaf $\mathcal G_X^*({\mathcal{S}}_{|\widetilde X})$ of rank $n+1$ is isomorphic to  the  {\it sheaf of principal parts of $(X,\mathcal O(1))$}, which can be defined directly on $X$ as the extension of $\Omega_X^1\otimes O(1)$ by $\mathcal O(1)$ determined by $c_1(\mathcal O(1))\in H^1(\Omega_X^1)$:
\begin{equation}\label{PXext}
0\to\Omega_X^1(1)\to \mathcal P_X\to \mathcal O(1)\to 0,
\end{equation}
yielding $\det(\mathcal P_X)\simeq\omega_X(n+1)$ and that \eqref{PXext} is not splitting.
\end{rem}

\begin{rem}

The  above terminology  generalizes the one  introduced in Example \ref{dev:surf} for the case of surfaces. For a developable surface $S\subset\PP^N$ as in Example \ref{dev:surf} we have that $\widetilde S$ is the closure of $\widetilde C$ in $\mathbb G(1,N)$, that $\varrho(S)=1$ and that the closure of a general fiber of $\mathcal G_S$ is $l_c$.

An irreducible  curve $X\subset\PP^N$ is developable if and only if it is a line. Developable varieties of dimension two and three have been classified. A developable surface $S\subset\PP^N$, $N\geq 3$, is either a plane, a cone over a non-degenerate curve or the tangent variety to a non-degenerate curve (see for example \cite[Proposition~2.5.6]{Fischer.Piontkowski.Book2001}). We shall come back to the three dimensional case later.

If $X\subset\PP^N$ is developable, then necessarily $\dim(\Tan(X))\leq\dim(\widetilde X)+n<2n$. The converse is not true since there exist smooth varieties $X\subset\PP^N$
of dimension $n\geq 3$ such that $N\geq 2n$ and  $\dim(\Tan(X))<2n$. For example the tangent variety to the Segre fourfold $X=\PP^2\times\PP^2\subset\PP^8$ is
a cubic hypersurface.
\end{rem}

\subsection{Focal locus of the  family of tangent spaces to a variety}\label{parTanX}
The study of the focal and  the fundamental locus of the family of projective tangent spaces to a variety $X\subset\PP^N$ of dimension $n\geq 2$ deserves a
special treatment for various peculiarities. 

From now on we shall suppose $N\geq 2n$ and that the family of tangent spaces to $X\subset\PP^N$ is non-degenerate, i.e.  $\dim(\Tan(X))=2n$. In particular,  $X\subset\PP^N$ is non-developable. Under these hypothesis, the morphisms $\overline{q}:T_X\to \Tan(X)$ and $q:T^\circ_X\to \PP^N$ are generically \' etale. In analogy with the situation illustrated in Example \ref{dev:surf} we shall analyse  in detail the focal locus of $p:T_X^\circ\to X_{\reg}$.

We  study $X\subset\PP^N$ locally around a general fixed $x\in X_{\reg}$.
Let $\PP:\mathbb C^{N+1}\setminus\mathbf 0\to \PP^N$ be the quotient morphism. We can suppose $x=\PP(\mathbf e_0)$.  
Let 
\begin{equation}\label{parx}
\mathbf \Phi:\Delta^n\to U\subset \mathbb C^N=\PP^N\setminus V(x_0)
\end{equation}
be a local holomorphic parametrization  of $X$   defined on a sufficiently small polydisc  $\Delta^n\subset \mathbb C^{n}$  with coordinates $\mathbf u=(u_1,\ldots ,u_n)$ centered  at $\mathbf 0\in\mathbb C^n$ and such that $x=\mathbf \Phi(\mathbf 0)$.  If $\mathbf \Phi=(\Phi_1,\ldots,\Phi_N)$, then the Jacobian of the  functions $\Phi_i$ is  of maximal rank $n$ at every point $\mathbf u\in\Delta^n.$

Let 
$$\partial_i\mathbf \Phi(\mathbf u)=(\frac{\partial \mathbf \Phi_1}{\partial u_i}(\mathbf u), \ldots, \frac{\partial \mathbf \Phi_N}{\partial u_i}(\mathbf u)).$$ Then 
$t_{\mathbf \Phi(\mathbf u)}X\subset\mathbb C^N$ 
has a parametrization $\phi_{\mathbf u}:\mathbb C^n\to t_{\mathbf \Phi(\mathbf u)}X$ given
by $$\phi_{\mathbf u}(\mathbf{\lambda})=\mathbf \Phi(\mathbf u)+\sum_{i=1}^n\lambda_i\cdot \partial_i\mathbf \Phi(\mathbf u),$$
where $\mathbf{\lambda}=(\lambda_1,\ldots, \lambda_n)\in\mathbb C^n$.
Let us recall the identifications  $$t_{\phi_{\mathbf u}(\lambda)} T_{\Phi(\mathbf u)} X=t_{\phi_{\mathbf u}(\lambda)} t_{\Phi(\mathbf u)} X=t_{\Phi(\mathbf u)}X$$ 
so that  $$t_{(\Phi(\mathbf u),\phi_{\mathbf u}(\lambda))} T^\circ_X =t_{\Phi(\mathbf u)} X \times t_{\phi_{\mathbf u}(\lambda)}T_xX=t_{\Phi(\mathbf u)} X \times t_{\Phi(\mathbf u)}X.$$
For every $j=1,\ldots, n$, let
$$\mathbf w_j(\lambda,\mathbf u)=(\sum_{i=1}^n\lambda_i\cdot \frac{\partial^2\mathbf \Phi_1}{\partial u_i\partial u_j}(\mathbf u), \ldots, \sum_{i=1}^n\lambda_i\cdot \frac{\partial^2\mathbf \Phi_N}{\partial u_i\partial u_j}(\mathbf u)).$$

The image of 
$$dq_{(\Phi(\mathbf u),\phi_{\mathbf u}(\lambda))}: t_{\Phi(\mathbf u)} X \times t_{\Phi(\mathbf u)}X\rightarrow t_{\phi_{\mathbf u}(\lambda)} \Tan(X)\subseteq 
\mathbb C^N$$
is the affine subspace passing through $\mathbf\phi_{\mathbf u}(\lambda)$ and having direction equal to 
\begin{equation}\label{diffG}
\mathcal L(\partial_1\mathbf\Phi(\mathbf u),\ldots, \partial_n\mathbf\Phi(\mathbf u), \mathbf w_1(\lambda,\mathbf u), \ldots, \mathbf w_n(\lambda,\mathbf u)),
\end{equation}
whose dimension is  
$$n+\dim(\mathcal L( \mathbf w_1(\lambda,\mathbf u), \ldots, \mathbf w_n(\lambda,\mathbf u)))\leq 2n.$$

The (second) {\it osculating space $T^{(2)}_xX$ at $x$} is the projective closure of the affine space through $x=\Phi(\mathbf 0)$ with direction as in \eqref{diffG} with  $(\lambda,\mathbf u)=(\mathbf 0, \mathbf 0).$ Using higher order derivations one defines $T^{(k)}_xX$ for every $x\in X$ and for every $k\geq 1$ in an analogous way. With this definition clearly $T^{(1)}_xX=T_xX$ for any $x\in X$.  

We have proved the following result, already known (at least) to Terracini (see \cite{Terracini}).

\begin{pr}\label{focuscar} Let notation be as above and suppose $\lambda\neq\mathbf 0$. Then the following conditions are equivalent:
\begin{enumerate}
\item $(x,\phi_{\mathbf 0}(\lambda))\in x\times t_xX\subset x\times T_xX$ is a focus of the family of projective tangent spaces;
\medskip
\item $$\dim(\mathcal L( \mathbf w_1(\lambda,\mathbf 0), \ldots, \mathbf w_n(\lambda,\mathbf 0)))<n;$$
\medskip
\item There exists $$\mathbf v=\sum_{j=1}^n\mu_j\cdot \partial_j{\mathbf \Phi}(\mathbf 0)\in t_xX\setminus x$$
such that
$$\sum_{j=1}^n\mu_j\cdot \mathbf w_j(\lambda,\mathbf 0)=\mathbf 0;$$
or, equivalently, such that
\begin{equation}\label{sym2}
\sum_{i,j=1}^n\lambda_i\cdot \frac{\partial^2\mathbf \Phi_k}{\partial u_i\partial u_j}(\mathbf 0)\cdot\mu_j=0
\end{equation}
 for every $k=1,\ldots, N.$
\medskip

\end{enumerate}

Moreover, $(x,\phi_{\mathbf 0}(\lambda))\in x\times t_xX$, $\lambda\neq\mathbf 0$,  is a focus of the family of  tangent spaces to $X$ if and only if $(x,\phi_{\mathbf 0}(\alpha\cdot \lambda))\in x\times t_xX$ is a focus of the family of  tangent spaces to $X$ for every $\alpha\in\mathbb C^*$.

In particular, if $(x,\phi_{\mathbf 0}(\lambda))\in x\times t_xX$ is a focus, then the line $\langle x, \phi_{\mathbf 0}(\lambda)\rangle\subset T_xX$ consists of foci and the support of the intersection of the focal locus of the family of tangent spaces to $X$ with $T_xX$ is a cone with vertex $x$.
\end{pr}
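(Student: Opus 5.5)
The plan is to read everything off the local description of the differential of $q$ in \eqref{diffG}, combined with the focus criterion recalled in \S 1.1. By that criterion, a point $s\in\PP^r_v$ is a focus exactly when $(dq)_s$ fails to be injective on $t_s\mathcal V$. Here $r=n$ and $\dim t_s T^\circ_X=2n$. Under the identification $t_{(x,\phi_{\mathbf 0}(\lambda))}T^\circ_X=t_xX\times t_xX$, the image of $dq$ at $(x,\phi_{\mathbf 0}(\lambda))$ is spanned by $\partial_1\mathbf\Phi(\mathbf 0),\ldots,\partial_n\mathbf\Phi(\mathbf 0),\mathbf w_1(\lambda,\mathbf 0),\ldots,\mathbf w_n(\lambda,\mathbf 0)$. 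Hence $dq$ is non-injective if and only if this span has dimension $<2n$.

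\emph{Step 1: (1)$\Leftrightarrow$(2).} I first check that the second factor $t_xX$ (the fibre direction) maps isomorphically onto $\mathcal L(\partial_i\mathbf\Phi(\mathbf 0))$. The horizontal direction $\partial/\partial u_j$ maps to $\partial_j\mathbf\Phi(\mathbf 0)+\mathbf w_j(\lambda,\mathbf 0)$. Modulo the image of the fibre directions, this image is therefore $\mathcal L(\mathbf w_j(\lambda,\mathbf 0))$ taken modulo $t_xX$. So non-injectivity amounts to a nonzero $\mu$ with $\sum\mu_j\mathbf w_j\in\mathcal L(\partial_i\mathbf\Phi(\mathbf 0))$. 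The main subtlety, and the place I expect care to be needed, is passing from ``$\dim$ of the span of the $\mathbf w_j$ modulo $t_xX$'' to the unreduced condition (2). My first attempt is to normalize the parametrization, taking $\mathbf\Phi$ to be a graph over $t_xX$, i.e.\ $\Phi(\mathbf u)=(\mathbf u,g(\mathbf u))$ with $g$ vanishing to order $2$. Then the second derivatives have zero components along $t_xX$, and the two conditions coincide. Since the focal condition is intrinsic (non-injectivity of $dq$), it is independent of the parametrization, so this normalization is harmless.

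\emph{Step 2: (2)$\Leftrightarrow$(3).} This is linear algebra: $n$ vectors $\mathbf w_1,\ldots,\mathbf w_n$ span a space of dimension $<n$ if and only if there is a nontrivial linear relation $\sum\mu_j\mathbf w_j=\mathbf 0$. Expanding the definition of $\mathbf w_j$ coordinatewise gives \eqref{sym2}. The vector $\mathbf v=\sum\mu_j\partial_j\mathbf\Phi(\mathbf 0)$ is nonzero because the $\partial_j\mathbf\Phi(\mathbf 0)$ are independent.

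\emph{Step 3: homogeneity and the cone.} Condition \eqref{sym2} is bilinear in $(\lambda,\mu)$. Replacing $\lambda$ by $\alpha\lambda$ with $\alpha\in\mathbb C^*$ therefore preserves solvability with the same $\mu$, which gives the ``Moreover'' statement. The points $\phi_{\mathbf 0}(\alpha\lambda)$ with $\alpha\in\mathbb C^*$ fill the line $\langle x,\phi_{\mathbf 0}(\lambda)\rangle$, except for $x$ itself and the point at infinity. The focal locus is closed, so it contains the whole line by closure. It follows that the focal locus in $T_xX$ is a union of lines through $x$, i.e.\ a cone with vertex $x$.
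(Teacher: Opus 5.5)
Your route is the paper's own. Its proof is just the computation of the image of $dq$ in \eqref{diffG}, followed by linear algebra, bilinearity in $\lambda$, and closedness of the focal locus. Your Step~1 correctly extracts the intrinsic criterion: $(x,\phi_{\mathbf 0}(\lambda))$ is a focus if and only if $\sum_j\mu_j\mathbf w_j(\lambda,\mathbf 0)\in\mathcal L(\partial_1\mathbf\Phi(\mathbf 0),\ldots,\partial_n\mathbf\Phi(\mathbf 0))$ for some $\mu\neq\mathbf 0$.

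\textbf{The gap: the graph normalization is not harmless.} Intrinsicness only shows that condition (1) is independent of the parametrization; conditions (2) and (3) are not. Take $\mathbf\Phi=\mathbf\Psi\circ h$ with $h(\mathbf 0)=\mathbf 0$ and $dh_{\mathbf 0}=\mathrm{id}$. Then
$\mathbf w^{\mathbf\Phi}_j(\lambda,\mathbf 0)=\mathbf w^{\mathbf\Psi}_j(\lambda,\mathbf 0)+\sum_a\partial_a\mathbf\Psi(\mathbf 0)\sum_i\lambda_i\partial_i\partial_jh_a(\mathbf 0)$,
and this tangential correction can destroy a linear relation among the $\mathbf w_j$. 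For a concrete instance, take $\mathbf\Psi(\mathbf s)=(s_1,s_2,s_1^2,s_2^2)$. Its closure $X\subset\PP^4$ has $\Tan(X)=\PP^4$, and all its affine points are projectively equivalent, so the origin is a general point. Let $\lambda=(1,0)$.
\begin{itemize}
\item For $\mathbf\Psi$, condition (2) holds with $\mu=(0,1)$.
\item For $\mathbf\Phi(\mathbf u)=\mathbf\Psi(u_1,u_2+u_1u_2)$ one finds $\mathbf w_1(\lambda,\mathbf 0)=(0,0,2,0)$ and $\mathbf w_2(\lambda,\mathbf 0)=(0,1,0,0)$, which are independent.
\item Yet $\phi_{\mathbf 0}(\lambda)$ is the same point in both cases, and it is a focus.
\end{itemize}
So for the arbitrary $\mathbf\Phi$ of ``notation as above'' only (2)$\Rightarrow$(1) is true, and no argument can give the converse.

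What you actually prove, and what is correct, is the equivalence with (2) and (3) read in $N_xX$. That is, the condition is $\sum_j\mu_j\mathbf w_j(\lambda,\mathbf 0)\in t_xX$; in coordinates with $t_xX=\mathbb C^n\times\mathbf 0$, this is \eqref{sym2} for $k=n+1,\ldots,N$ only. Alternatively, you can require from the outset that $\mathbf\Phi$ be a graph over $t_xX$. The paper passes over the same point when it asserts that the image of $dq$ has dimension $n+\dim\mathcal L(\mathbf w_1,\ldots,\mathbf w_n)$. It then tacitly switches to the reduced reading in \eqref{defdGX}, where $\varphi(\lambda,\mu)$ is a class in $N_xX$. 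You should make that reading, or the normalization, part of the statement rather than a reduction without loss of generality.

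\textbf{A smaller point in Step~3.} Closedness shows that the line through an affine focus consists of foci. To conclude that the whole focal set in $T_xX$ is a cone with vertex $x$, you must also exclude focal points of $\PP(t_xX)$ that lie on no such line. The same computation in homogeneous coordinates handles this. At the point $\beta(1,\mathbf\Phi(\mathbf 0))+(0,\sum_i\lambda_i\partial_i\mathbf\Phi(\mathbf 0))$, the $u_j$-derivative is $\beta(0,\partial_j\mathbf\Phi(\mathbf 0))+(0,\mathbf w_j(\lambda,\mathbf 0))$. Its first summand lies in the cone over $T_xX$, so the focal condition depends only on $[\lambda]$, also for $\beta=0$. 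With these two corrections your Steps~2 and~3 go through, since the reduced condition is still linear in $\lambda$.
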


\begin{rem}\label{singTX)}
Generic smoothness  and  \eqref{diffG}  yield  the following dimension formula  for an arbitrary $X\subset\PP^N$:
\begin{equation}\label{dimTXsecond}
\dim(\Tan(X))=n+\dim(\mathcal L( \mathbf w_1(\lambda,\mathbf u), \ldots, \mathbf w_n(\lambda,\mathbf u)))
\end{equation}
for $(\lambda,\mathbf u)\in \mathbb C^n\times \Delta$ general and also
\begin{equation}\label{dimT2Xsecond}
\dim(T_x^{(2)}X)=n+\dim(\mathcal L( \mathbf w_1(\mathbf 0,\mathbf 0), \ldots, \mathbf w_n(\mathbf 0,\mathbf 0)))
\end{equation}
for $x\in X_{\reg}.$
Moreover, the previous computations also show  that $\Tan(X)$ is singular along $X$ if $\Tan(X)\subsetneq \PP^N$ (see also  \cite[Proposition~3.3.3]{Fischer.Piontkowski.Book2001}).  In  general, the singular locus of $\Tan(X)$ can be bigger than $X$. The next well-known result is again a direct consequence of the above analysis (see also  \cite[Proposition~3.3.1]{Fischer.Piontkowski.Book2001}).
\end{rem}

\begin{pr}\label{TX is developable} Let $X\subset \PP^N$ be  a non-degenerate variety.
 Then $\Tan(X)\subseteq\PP^N$ is  a developable variety. More precisely, if $y\in \Tan(X)$ is general, i.e. $y\in T_x X$ is general with $x\in X$ general, then the projecitve tangent space to $\Tan(X)$  remains constant along $\langle x,y\rangle \cap \Tan(X)_{\reg}$ and  $T_xX\subset T_z\Tan(X)$ for $z\in\langle x,y\rangle$ general.
 \end{pr}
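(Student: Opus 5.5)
We work with the local parametrization \eqref{parx} of $X$ around a general point $x=\mathbf\Phi(\mathbf 0)$ and with the parametrization of $\Tan(X)$ near $y$ given by
$$\Psi(\mathbf u,\lambda)=\mathbf\Phi(\mathbf u)+\sum_{i=1}^n\lambda_i\,\partial_i\mathbf\Phi(\mathbf u),\qquad (\mathbf u,\lambda)\in\Delta^n\times\mathbb C^n .$$
The computation leading to \eqref{diffG} shows that the image of the differential of $\Psi$ at $(\mathbf u,\lambda)$ has direction
$$W(\mathbf u,\lambda):=\mathcal L\big(\partial_1\mathbf\Phi(\mathbf u),\ldots,\partial_n\mathbf\Phi(\mathbf u),\mathbf w_1(\lambda,\mathbf u),\ldots,\mathbf w_n(\lambda,\mathbf u)\big).$$
Hence the projective tangent space to $\Tan(X)$ at a smooth point $\Psi(\mathbf u,\lambda)$ where $\Psi$ is submersive onto $\Tan(X)$ is the projective closure of $\Psi(\mathbf u,\lambda)+W(\mathbf u,\lambda)$. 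By generic smoothness \eqref{dimTXsecond}, for $(\mathbf u,\lambda)$ general this gives $T_{\Psi(\mathbf u,\lambda)}\Tan(X)$.

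\textbf{Key observation.} Since $\mathbf w_j(\lambda,\mathbf u)$ is linear in $\lambda$, we have $W(\mathbf u,\alpha\lambda)=W(\mathbf u,\lambda)$ for every $\alpha\in\mathbb C^*$. The points $\Psi(\mathbf 0,\alpha\lambda)$, $\alpha\in\mathbb C$, describe the affine part of the line $\langle x,y\rangle$, where $y=\Psi(\mathbf 0,\lambda)$. Moreover $\Psi(\mathbf 0,\alpha\lambda)+W(\mathbf 0,\alpha\lambda)=y+W(\mathbf 0,\lambda)$, because $\Psi(\mathbf 0,\alpha\lambda)-y\in\mathcal L(\partial_i\mathbf\Phi(\mathbf 0))\subseteq W$. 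So the affine spaces coincide along the line. Since $x$ is general and $\lambda$ general, for general $\alpha$ the pair $(\mathbf 0,\alpha\lambda)$ is still general in the sense that $\dim W(\mathbf 0,\alpha\lambda)=\dim\Tan(X)-0$: the dimension of $W$ is constant in $\alpha\neq 0$, hence equal to its generic value. Thus at every point $z\in\langle x,y\rangle\cap\Tan(X)_{\reg}$ at which $d\Psi$ has maximal rank, $T_z\Tan(X)$ equals this fixed linear space.

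\textbf{Remaining issue.} One must handle the points of $\langle x,y\rangle\cap\Tan(X)_{\reg}$ where $\Psi$ may fail to be submersive, for instance $x$ itself, or foci. Here I would argue by continuity: the Gauss map of $\Tan(X)$ is a morphism on $\Tan(X)_{\reg}$, and it is constant on the dense open subset of the irreducible curve $\langle x,y\rangle\cap\Tan(X)_{\reg}$ found above. It is therefore constant on the whole of it. This is the main (though mild) obstacle: ensuring the line meets $\Tan(X)_{\reg}$ in a connected set on which the generic description applies, which follows since the line is irreducible and $y$ is a smooth point.

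\textbf{Conclusion.} Finally, $T_xX$ is the projective closure of $x+\mathcal L(\partial_i\mathbf\Phi(\mathbf 0))$. This lies in $z+W(\mathbf 0,\lambda)$ because $z-x\in\mathcal L(\partial_i\mathbf\Phi(\mathbf 0))$, giving $T_xX\subset T_z\Tan(X)$. Developability follows: a general fiber of $\mathcal G_{\Tan(X)}$ contains the line $\langle x,y\rangle$, so $\dim\widetilde{\Tan(X)}<\dim\Tan(X)$. When $\Tan(X)=\PP^N$ the statement is trivial.
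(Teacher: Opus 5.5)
Your proposal is correct and follows essentially the paper's argument: the paper derives the statement directly from \eqref{diffG}, whose direction space $\mathcal L(\partial_i\mathbf\Phi(\mathbf 0),\mathbf w_j(\lambda,\mathbf 0))$ is invariant under $\lambda\mapsto\alpha\lambda$ and contains $t_xX$. Your extra continuity step, using the Gauss morphism on $\Tan(X)_{\reg}$ to cover $x$ and the point at infinity of $\langle x,y\rangle$, only makes explicit what the paper leaves implicit.
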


\subsection{Second fundamental form of a projective variety}
Let notation and hypothesis be as above and let $\widetilde{\mathbf \Phi}:\mathbb C^*\times \Delta^n\to \PP^\inv(U)=\mathbb C^*\times U\subset\mathbb C^{N+1}\setminus\mathbf 0$ be the lifting of $\mathbf \Phi$ to the quotient map  $\PP:\mathbb C^{N+1}\setminus\mathbf 0\to \PP^N$ given by
$$\widetilde{\mathbf \Phi}(\alpha,\mathbf u)=(\alpha,\alpha\cdot \mathbf \Phi(\mathbf u)).$$

The {\it normal space to $X\subset\PP^N$ at $x$} is 
$$N_xX=\frac{t_x\PP^N}{t_xX}\simeq\frac{t_x\mathbb C^N}{t_xX},$$
 where $\mathbb C^N=\PP^N\setminus V(x_0).$
 
 If   $T_{\mathbf \Phi(\mathbf u)}X=\PP(W_{\mathbf \Phi(\mathbf u)})$ with $W_{\mathbf \Phi(\mathbf u)}\subset\mathbb C^{N+1}$, then $\widetilde{\mathbf \Phi}(\alpha,\mathbf u)\in W_{\mathbf\Phi(\mathbf u)}$ for every $(\alpha,\mathbf u)\in \mathbb C^*\times\Delta^n$ and  
$$t_xX\simeq\frac{W_{\mathbf \Phi(\mathbf u)}}{\langle \widetilde{\mathbf\Phi}(1,\mathbf u)\rangle}\mbox{, respectively } t_x\PP^N\simeq \frac{\mathbb C^{N+1}}{\langle 
\widetilde{\mathbf\Phi}(1,\mathbf u)\rangle}.$$ 
Hence  
$$t_{[T_{\mathbf \Phi(\mathbf u)}X]}\mathbb G(n,N)\simeq \Hom(W_{\mathbf \Phi(\mathbf u)},\frac{\mathbb C^{N+1}}{W_{\mathbf \Phi(\mathbf u)}})\simeq \Hom(t_xX,N_xX).$$ 

 Let $\pi:t_x\mathbb C^N\to N_xX$ be the quotient map and let 
 $\varphi:t_xX\to \Hom(t_xX,N_xX)$ be the linear map defined by 
 $$\varphi(\lambda)=\pi\circ dq_{(x,\phi_{\mathbf 0}(\lambda))_{t_xX\times \mathbf 0}}.$$
 If $$\mathbf v=\sum_{j=1}^n\mu_j\cdot \partial_j{\mathbf \Phi}(\mathbf 0)\in t_xX$$
 and if $\mu=(\mu_1,\ldots, \mu_n),$
then 
\begin{equation}\label{defdGX}
\varphi(\lambda)(\mathbf v)=\varphi(\lambda,\mu):=[\sum_{j=1}^n\mu_j\cdot \mathbf w_j(\lambda,\mathbf 0)]\in N_xX.
\end{equation}

Since $\phi_{\mathbf 0}$ is linear, the linear map $\varphi$ induces a bilinear map 
$$\widetilde{\varphi}_x:t_xX\times t_xX\to N_xX,$$
 which is symmetric by \eqref{sym2}. The image of the dual  linear map $(N_xX)^*\to S^2(t_xX^*)$ is a vector space of homogenous quadratic forms on $t_xX$.

The associated linear system of quadrics $|\SSx|$ on $\PP(t_xX)\simeq \PP^{n-1}$ (or the above symmetric bilinear map $\widetilde{\varphi}_x$) is called the {\it second
fundamental form of $X$ at $x$}. By definition $\dim(|\SSx|)\leq N-n-1$ and, letting $\varphi_x:\PP(t_xX)\dasharrow |\SSx|^*$ be the associated rational map, formula \eqref{dimTXsecond}
translates into
\begin{equation}\label{dimTXssecond}
\dim(\Tan(X))=n+1+\dim(\overline{\varphi_x(\PP(t_xX)})
\end{equation}  
(see also {\cite[$\S$~5.7, p. 408]{Griffiths.Harris.1979} or \cite[Theorem~3.3.1, p. 129]{Fischer.Piontkowski.Book2001}}). 
In particular for $N\leq 2n$, $\dim(\Tan(X))=N$ implies $\dim(|\SSx|)= N-n-1$.

\begin{rem}\label{remdGX} 
The map \eqref{defdGX} is nothing but the differential of  the Gauss map, i.e.   the linear map
$$(d\mathcal G_X)_x:t_xX\to t_{[T_xX]}\mathbb G(n,N)\simeq \Hom(t_xX,N_xX).$$

As above, suppose  that $N\geq 2n$, yielding $\dim(N_xX)=N-n\geq n$, and also that $\dim(\Tan(X))=2n$. Then Proposition \ref{focuscar} can be reinterpreted in the following way.  Let $\mathbf w\in t_xX\setminus x$ and let $(d\mathcal G_X)_x(\mathbf w): t_xX\to N_xX$ be the corresponding linear map. The map $(d\mathcal G_X)_x(\mathbf w)$ is not injective if and only if $(d\mathcal G_X)_x(\alpha\cdot \mathbf w)$ is not injective for every $\alpha\in\mathbb C^*$ if and only if the line $\langle x,\mathbf w\rangle$ consists of foci of the family of tangent linear spaces to $X\subset\PP^N$ (see also \cite[$\S$~5.5, p. 408]{Griffiths.Harris.1979}).

Let $\mathbf w\in t_xX\setminus x$, let  $Q\in |\SSx|$ be a quadric hypersurface and let   $H_{[\mathbf w],Q}\subset\PP(t_xX)$ be the polar hyperplane of $Q$ with respect to $[\mathbf w]$. Then
$$\PP(\ker((d\mathcal G_X)_x(\mathbf w)))=\bigcap_{Q\in |\SSx|}H_{[\mathbf w],Q}.$$
In particular $\mathbf w\in t_xX\setminus x$ is a focus if and only if the intersection of the polar hyperplanes of the quadrics in $|\SSx|$ with respect to $[\mathbf w]$ is not empty. Clearly this intersection may be computed as  the intersection of  the polar hyperplanes of  a basis of the second fundamental form.
\end{rem}

If  $T_xX=\PP(\mathcal L(e_0)\oplus t_xX)$, then  $\PP(t_xX)$ can be identified with the {\it hyperplane at infinity} for the previous splitting and $t_xX\subset T_xX$ with the affine space $T_xX\setminus \PP(t_xX)$.
Hence the points in $\PP(t_xX)$ correspond to the directions through $x$ of the lines in $t_xX$ passing through $x$. For this reason  we shall say that, for $\mathbf v\in t_xX\setminus x$, the  point $[\mathbf v]\in\PP(t_xX)$ is the direction  of the line $\langle x,\mathbf v\rangle \subset t_xX$ (or of the line $\langle x,[\mathbf v]\rangle \subset T_xX$). 

Two directions $[\mathbf v], [\mathbf w]\in \PP(t_xX)$ are said to be {\it conjugate} if $(d\mathcal G_X)_x(\mathbf v)(\mathbf w)=\mathbf 0$, which by symmetry  occurs if and only if 
$(d\mathcal G_X)_x(\mathbf w)(\mathbf v)=\mathbf 0.$  This means that they are {\it conjugate} (or {\it polar}) for every quadric $Q\in |\SSx|$.

A direction $[\mathbf v]\in\PP(t_xX)$ is said to be an {\it asymptotic direction} if $ (dG_X)_x(\mathbf v)(\mathbf v)=\mathbf 0$.
Using the parameterization \eqref{parx} and the definition of the second fundamental form we deduce that a direction $[\mathbf v]\in \PP(t_xX)$ is asymptotic if and only if the multiplicity of intersection  of the 
tangent line $\langle x, [\mathbf v]\rangle$ with $X$ at $x$  is at least 3. Equivalently, $[\mathbf v]\in\PP(t_xX)$ is an asymptotic direction if and only if $[\mathbf v]\in\Bs(|\SSx|)$, the base locus  of the second fundamental form of $X$ at $x$.

\begin{pr}\label{Key observation}
    Let $x\in X\subset\PP^N$ be a general point and suppose that $r\in T_x X\setminus x$ is a fundamental point for the family of tangent spaces to $X$.
    Then $\langle x, r\rangle\cap \PP(t_xX)\in \Bs(|\SSx|)$.
\end{pr}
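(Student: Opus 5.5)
Since $r$ is fundamental, $\overline q^{-1}(r)$ has positive dimension near $(x,r)$. Because $x$ is general and fundamental points form a closed set, we may take a curve $\gamma(t)$ in $X_{\reg}$ with $\gamma(0)=x$ and $r\in T_{\gamma(t)}X$ for all small $t$. The plan is to differentiate this incidence condition, in the spirit of Example~\ref{dev:surf}.

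Concretely, in the parametrization \eqref{parx} write $\gamma(t)=\Phi(\mathbf u(t))$ with $\mathbf u(0)=\mathbf 0$, and let $\mathbf v=\sum_j\mu_j\partial_j\Phi(\mathbf 0)$ with $\mu=\mathbf u'(0)$; this is the tangent direction of $\gamma$ at $x$. After reparametrizing, we may assume $\mu\neq \mathbf 0$. For the generic case, assume also $r\notin\PP(t_xX)$, so that $r=\phi_{\mathbf 0}(\lambda)$ with $\lambda\neq\mathbf 0$. Near $t=0$ we then have $r=\Phi(\mathbf u(t))+\sum_i\lambda_i(t)\partial_i\Phi(\mathbf u(t))$ with $\lambda(0)=\lambda$.

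Differentiating at $t=0$ gives
\[
\mathbf 0=\mathbf v+\sum_i\lambda_i'(0)\partial_i\Phi(\mathbf 0)+\sum_j\mu_j\mathbf w_j(\lambda,\mathbf 0).
\]
The first two terms lie in $t_xX$, so projecting to $N_xX$ yields $(d\mathcal G_X)_x(\lambda)(\mathbf v)=\mathbf 0$. Thus $\mathbf v$ and the direction $\lambda$ of $\langle x,r\rangle$ are conjugate, which only recovers that $r$ is a focus.

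To obtain asymptoticity we must show that the direction of $\gamma$ is proportional to $\lambda$; then $\varphi(\lambda,\lambda)=\mathbf 0$, i.e. $[\lambda]\in\Bs(|\SSx|)$. Geometrically, the tangent spaces along the curve all pass through $r$, so $\gamma$ should be a curve whose tangent lines point toward $r$. I will argue this by letting $x$ vary. For general $x'$ near $x$, the fundamental locus in $T_{x'}X$ deforms, giving points $r(x')$; in a neighbourhood the configuration is an analytic family. Fix a component $\Gamma\subset\overline p(\overline q^{-1}(r))$ through $x$. For every $y\in\Gamma$ we have $r\in T_yX$, hence $T_y\Gamma\subset T_yX\cap\ldots$. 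Now use that the projection of $X$ from $r$ is ramified along $\Gamma$: for $y\in\Gamma$, $r\in T_yX$ means the line $\langle y,r\rangle$ is tangent to $X$ at $y$.

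The step I expect to be hardest is showing that $\Gamma$ consists of lines through $r$, that is, $\langle y,r\rangle\subset\Gamma$, so that the tangent direction of $\Gamma$ at $x$ is $[\lambda]$. I would first try the generality of $x$: every point of a dense set is on such a $\Gamma$ with a fundamental $r$. The map $x\mapsto$ (direction to $r$) defines a field of lines, and the conjugacy relation holds for every direction $\mathbf v$ tangent to $\Gamma$. If $\dim\Gamma\ge1$ and $[\lambda]$ itself lies in $T_x\Gamma$, the conjugacy applied to $\mathbf v=\lambda$ gives the claim immediately. Failing a direct argument, an alternative is to apply the same derivative identity with the roles reversed: fix $y\in\Gamma$ and consider $x$ moving along $\Gamma$ toward $y$. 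Then the secant direction $\langle x,y\rangle\cap T_xX$ tends to the tangent direction of $\Gamma$, and the line $\langle y,r\rangle$ tends to $\langle x,r\rangle$. Comparing these limits should force $[\mathbf v]=[\lambda]$. Finally, the degenerate case $r\in\PP(t_xX)$ will be handled by the same computation in homogeneous coordinates using $\widetilde{\Phi}$.
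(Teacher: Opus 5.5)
Your first-order computation is correct and matches the paper's first step. Differentiating $r=\Phi(\mathbf u(t))+\sum_i\lambda_i(t)\partial_i\Phi(\mathbf u(t))$ shows that the tangent direction $\mathbf v$ of $\gamma$ at $x$ is conjugate to the direction $\lambda$ of $\langle x,r\rangle$. If $\mathbf v\in\mathcal L(\lambda)$, this already gives $[\lambda]\in\Bs(|\SSx|)$, which is exactly the paper's first case.

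The gap is everything after that. You reduce the proposition to showing $[\mathbf v]=[\lambda]$, or even that $\Gamma$ is swept by lines through $r$. You give no argument for this: ``comparing these limits should force $[\mathbf v]=[\lambda]$'' is not a step. Moreover, the claim is false in general. The paper does not try to exclude the transverse case. It treats $\mathbf v\notin\mathcal L(\mathbf w)$ separately, arguing that the lines $L_y=\langle y,r\rangle$, tangent to $X$ at $y\in C_r$, specialize to $L_x$. Since $L_x$ is tangent at $x$ in a direction other than $\mathbf v$, the paper concludes $\mult_x(L_x\cap X)\geq 3$.

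The transverse case really occurs, and there the conclusion can fail. Let $X=V(2x_0x_2-x_1^2,\,2x_0x_4-x_3^2)\subset\PP^4$, the closure of the image of $\mathbf P(t,a)=(1,t,t^2/2,a,a^2/2)$.

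\emph{Invariants at a general point.} We have $\mathbf P_{ta}=\mathbf 0$, and $\mathbf P,\mathbf P_t,\mathbf P_a,\mathbf P_{tt},\mathbf P_{aa}$ span $\mathbb C^5$. Hence at every point with $x_0\neq 0$, $|\SSx|=\langle dt^2,da^2\rangle$, a base-point-free pencil, and $\Tan(X)=\PP^4$ by \eqref{dimTXssecond}.

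\emph{A fundamental point in a non-asymptotic direction.} For $x=[\mathbf P(t_0,a_0)]$, the point $r=[\mathbf P_a(t_0,a_0)]=[0:0:0:1:a_0]$ lies on $T_yX$ for every $y$ on the conic $C_r=\{a=a_0\}$, so $r$ is fundamental. But $C_r$ has direction $[\partial_t]$ at $x$, transverse to $\langle x,r\rangle$. The direction $[\partial_a]$ of $\langle x,r\rangle$ is not asymptotic. Indeed, for every $y\in C_r$, including $y=x$, the line $\langle y,r\rangle$ meets $X$ at $y$ with multiplicity exactly $2$.

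\emph{Consequence.} Neither your missing step nor the paper's limiting argument can be carried out in this generality. The statement needs an extra hypothesis. One candidate is that the whole line $\langle x,r\rangle$ consists of fundamental points, which is what is available when $\tau(X)=1$ in Theorem~\ref{If n>=3 then tau(X)>=2}. For $n=2$, a dimension count on the fundamental locus then forces $\langle x,r\rangle\subset X$, which gives asymptoticity.
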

\begin{proof} Suppose $r\in T_x X\setminus x$ is a fundamental point  and let $\langle x, r\rangle\cap \PP(t_xX)=[\mathbf w]$. 
Let $\widetilde{C_r}\subset X_{\reg}\times r\subset T_X^\circ$ be a smooth curve such that $(x,r)\in \widetilde{C_r}$ and  let $\mathbf v\in t_xX$ be such that 
$$\mathcal L(\mathbf v)=t_{(x,r)}\widetilde{C_r}\subseteq \ker((dq)_{(x,r)})\simeq \ker((d\mathcal G_X)_x(\mathbf w)),$$
yielding $(d\mathcal G_X)_x(\mathbf w)(\mathbf v)=\mathbf 0$.
 Define $C_r:=p(\widetilde{C_r})\subset X$. Then  $x\in C_r$ and $\mathcal L(\mathbf v)=t_x C_r\subset t_x X$. If $\mathbf v\in \mathcal L(\mathbf w)$, then
 $\mathbf 0=(d\mathcal G_X)_x(\mathbf w,\mathbf w)$ and $[\mathbf w]\in\Bs(|\SSx|)$.
 
Suppose $\mathbf v\not\in \mathcal L(\mathbf w)$.  For each $y\in C_r\setminus x$ the line $L_y:=\langle y,r\rangle$ is tangent to $X$ at $y$, i.e. $L_y\subset T_y X$ and so $\mult_y(L_y \cap X)\geq 2$. When $y$ tends to $x$ along $C_r$, or equivalently, in the direction specified by $\mathbf v$, the line $L_y$ tends to the line $L_x=\langle x,r\rangle\subset T_x X$. Since $L_x$ is already tangent to $X$ at $x$ in a direction different from $\mathbf v$, we deduce $\mult_x(L_x \cap X)\geq 3$, yielding $[\mathbf w]\in \Bs(|\SSx|)$.
\end{proof}

We now  recall for further reference  an important  fact on the base locus  of the second fundamental form $|\SSx|$. We shall adopt the classical definition of {\it scroll over a curve of dimension $n$}, that is $X^n\subset\PP^n$ is the image of a 1-dimensional family of $\PP^{n-1}$'s such that through the general point of $X$ there passes  a unique $\PP^{n-1}$ of the family. In other words there exists a family $p:\mathcal X\to C$ of $\PP^{n-1}$'s over a smooth curve $C$ such that the tautological morphism $q:\mathcal X\to X$ is birational. 

\begin{thm}[{\cite[$\S$~3.21, p. 399]{Griffiths.Harris.1979}}]\label{Fixed component in Bs(|II|) implies X is a scroll over a curve}
  Let $X^n\subset \PP^{N}$ with $n\geq 3$. If for a general $x\in X$ the second fundamental form  $|\SSx|$ has a fixed component, then $X\subset\PP^N$ is a scroll in $\PP^{n-1}$'s over a curve and $\PP(t_x \PP^{n-1})=\PP^{n-2}$ is a fixed component of $|\SSx|$.

    Conversely, if $X^n\subset\PP^N$ is a scroll in $\PP^{n-1}$'s over a curve, then $\PP(t_x \PP^{n-1}_x)=\PP^{n-2}\subseteq \Bs(|\SSx|)$.
\end{thm}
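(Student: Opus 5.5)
The plan is to prove the converse first, since it is a direct computation in the local parametrization \eqref{parx}, and then to deduce the main implication from the structure of quadrics with a common linear factor.

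\emph{Converse.} Let $X$ be a scroll in $\PP^{n-1}$'s over a curve. Near a general point $x$, write
$$\mathbf \Phi(u_1,\ldots,u_n)=a_0(u_1)+\sum_{i=2}^n u_i\, a_i(u_1),$$
so that the fibre $\PP^{n-1}_x$ corresponds to $u_1=0$. Then $\partial^2\mathbf\Phi/\partial u_i\partial u_j=0$ for $i,j\geq 2$. Hence every quadric of $|\SSx|$ has no monomials $\mu_i\mu_j$ with $i,j\geq 2$, and is therefore of the form $\mu_1\cdot \ell(\mu)$. It follows that every quadric vanishes on $\PP(t_x\PP^{n-1}_x)=V(\mu_1)\simeq\PP^{n-2}$, as claimed.

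\emph{Main implication.} Let $F\subset\PP(t_xX)$ be the fixed component of $|\SSx|$. Since the members of $|\SSx|$ are quadrics, $F$ is either a hyperplane $H_x$ or a quadric. In the latter case $|\SSx|$ is a single quadric, so $\dim|\SSx|=0$.

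\textbf{Step 1: reduce to a hyperplane.} If $F$ is not a hyperplane, then $|\SSx|$ consists of one quadric, and in particular $\dim(T^{(2)}_xX)=n+1$. I expect to handle this case with the classical fact that $\dim T^{(2)}_xX=n+1$ for general $x$ forces $X$ to be a hypersurface in $\PP^{n+1}$. That case may be excluded by the intended reading of the statement, or the hypersurface case can be treated separately.

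So assume $F=H_x$ is a hyperplane. Every quadric then has the form $H_x\cdot \ell$, and therefore every direction $[\mathbf w]\in H_x$ is asymptotic.

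\textbf{Step 2: $H_x$ lies in the kernel of each $(d\mathcal G_X)_x(\mathbf w)$.} Choose coordinates with $H_x=\{\mu_1=0\}$. Each quadric of $|\SSx|$ is then $\mu_1\ell_k(\mu)$, and the bilinear form satisfies $\widetilde\varphi_x(\mathbf v,\mathbf w)=0$ for all $\mathbf v,\mathbf w\in \hat H_x$. This defines an $(n-1)$-dimensional distribution $x\mapsto \hat H_x\subset t_xX$ on $X_{\reg}$ whose directions are mutually conjugate.

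\textbf{Step 3: integrability.} I would show that this distribution is integrable and that its leaves are linear $\PP^{n-1}$'s. This is the main obstacle. I plan to imitate the Griffiths–Harris argument: use the third fundamental form, that is, the symmetry of third derivatives of $\mathbf\Phi$, together with the structure equations. The aim is to show that for vector fields $\mathbf v,\mathbf w$ tangent to $\hat H$, the second derivative $\partial_{\mathbf v}\partial_{\mathbf w}\mathbf\Phi$ lies in $\mathcal L(\mathbf\Phi,\hat H)$ modulo $t_xX$. Concretely, differentiate the identity $\widetilde\varphi(\mathbf v,\mathbf w)=0$ along $\hat H$ and exploit $n\geq 3$: with at least two independent directions in $\hat H$, cross-differentiation shows that $[\mathbf v,\mathbf w]$ stays in $\hat H$ (Frobenius), and that the span of $\mathbf \Phi$ and $\hat H$ is constant along each leaf. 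Both facts make each leaf an open subset of a fixed $(n-1)$-plane. The hypothesis $n\geq 3$ is exactly what makes $\hat H$ at least $2$-dimensional, which this step needs.

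\textbf{Step 4: conclude.} The leaves form a one-parameter family of $\PP^{n-1}$'s covering $X$, with exactly one leaf through a general point. Hence $X$ is a scroll over a curve, and by construction $\PP(t_x\PP^{n-1})=H_x$ is the fixed component.
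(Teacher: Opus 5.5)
Your proof of the converse is correct: in a parametrization adapted to the ruling, $\partial^2\mathbf\Phi/\partial u_i\partial u_j=0$ for $i,j\geq 2$, so every quadric of $|\SSx|$ is divisible by $\mu_1$. The main implication, however, has a genuine gap. Step~3 is the whole content of the theorem and you only announce it. Moreover, the reason you give for it to work ($n\geq 3$, so $\dim\hat H_x\geq 2$) is necessary but not sufficient. Take an adapted (Darboux) frame with $de_A=\sum_B\omega_{AB}e_B$, $\omega_k=\omega_{0k}$ and $\omega_{i\mu}=\sum_j q_{\mu ij}\omega_j$. Choose it so that $H_x=\{\omega_1=0\}$ and $q_\mu=\omega_1\ell_\mu$, and let $A_j$, $B_\mu$ denote $\omega_{j1}$, $\ell_\mu$ modulo $\omega_1$ ($j\geq 2$). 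Differentiating $\omega_{j\mu}=q_{\mu j1}\omega_1$ gives
\[
A_j\wedge B_\mu\equiv B_\mu(e_j)\,d\omega_1,\qquad d\omega_1\equiv\sum_{k\geq 2}\omega_k\wedge A_k \pmod{\omega_1}.
\]
If two of the $B_\mu$ are linearly independent, this forces $A_j=0$ for every $j\geq 2$. That gives at once Frobenius integrability and the constancy of $\langle e_0,e_2,\dots,e_n\rangle$ along the leaves. If the $B_\mu$ span only a line, it does not, and the conclusion really fails. Let $S\subset\PP^4$ be parametrized by $\mathbf\Phi(U,V)=(1,V,V^2+2U,V^3+6UV,V^4+12UV^2+12U^2)$, so that $\mathbf\Phi_{VV}=\mathbf\Phi_U$. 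For the cone $X^3\subset\PP^5$ over $S$ with a point vertex one finds $|\SSx|=\langle dU^2,dU\,dV\rangle$, with fixed line $\{dU=0\}$. The leaves of the distribution $\{dU=0\}$ are the cones over the rational normal quartics $U=\mathrm{const}$. By the converse, the planes of any scroll structure would have to be these leaves, so $X$ is not a scroll in planes. Hence an extra hypothesis is needed, for instance $\dim|\SSx|\geq 2$, which guarantees two independent $B_\mu$. This is exactly the situation in which the paper applies the theorem, since $\dim|\SSx|=n-1$ in the proof of Theorem~\ref{If n>=3 then tau(X)>=2}.

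Step~1 is also not right. The ``classical fact'' is false as stated. Developable varieties, such as cones over curves or varieties of osculating $(n-1)$-spaces to a curve, have $|\SSx|=\{\omega_1^2\}$ and hence $\dim T^{(2)}_xX=n+1$, without lying in a $\PP^{n+1}$. The correct statement needs the single quadric to have rank at least $2$. In that case $X$ is a hypersurface and the statement is simply false. Examples are a smooth quadric $Q^3\subset\PP^4$, or a cone with vertex $\PP^{n-3}$ over a smooth cubic surface, where the quadric $\omega_1\omega_2$ even has hyperplane components. So this case has to be excluded by hypothesis rather than treated separately. The rank-one case $\{\omega_1^2\}$ does satisfy the conclusion, but by a different argument: the Gauss map has rank one and its general fibres are the linear $\PP^{n-1}$'s of the scroll.
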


\section{The $i^{\tth}$--ceto of an irreducible projective variety $X\subset\PP^N$}
Following Severi (see \cite{Severi}) we define the $i^{\tth}$-ceto of a projective variety $X^n\subset\PP^N$
of dimension $n\geq 1$ in the following way.

The $n^{\tth}$-ceto $\omega_n(X)$ of $X^n\subset\PP^N$ is the number of tangent spaces to $X$ at smooth points
meeting a general $2n$-codimensional linear subspace of $\PP^N$. The $i^{\tth}$-ceto $\omega_i(X)$
of $X^n\subset\PP^N$ is the $i^{\tth}$-ceto of the intersection of $X$ with a general $(n-i)$-codimensional linear subspace of $\PP^N$.

In particular  $\omega_n(X)=0$ if and only if $\dim(\Tan(X))<2n$.  Letting notation be as in \eqref{Tuniv}, we put $\tau(X):=\deg(\widetilde q)$, which we call the {\it tangent degree} of $X\subset\PP^N$. For $N\geq 2n$,  we have $$\omega_n(X)=\tau(X)\cdot\deg(\Tan(X)).$$ 
If $N\geq 2n$ and if we project $X\subset\PP^N$ from a general subspace $L^{N-2n}\subset \PP^N$, 
then $\omega_n(X)$ is equal to the number of {\it pinch points} of the projection $\pi_L(X)\subset\PP^{2n-1}$.

For a smooth projective variety  $X\subset\PP^N$, one can compute $\omega_n(X)$ via Segre classes
of the locally free sheaf  of principal parts $\mathcal P_X=\mathcal G_X^*(\mathcal S_{\widetilde X})$:
\begin{equation}\label{SegreclasseFormula}
\omega_n(X)=\mathcal O_{\PP(\mathcal P_X)}(1)^{2n}.
\end{equation}
The value of  $\tau(X)=\deg(\widetilde q)$ is harder to compute if $N>2n$.

Severi introduced these invariants to prove the {\it Double Point Formula} below. If $N\geq 2n$ and if $X^n\subset\PP^N$ is a smooth  variety of degree $d$ and if $\sigma(X)$ is the number of nodes of a general linear projection of $X$ into $\PP^{2n}$, then
\begin{equation}\label{Sev:formula}
2\cdot \sigma(X)=d\cdot (d-1)-\sum_{i=1}^n\omega_i(X)
\end{equation}
(see \cite{Severi} and \cite{Catanese} for a modern treatment). Let us remark that for $n=1$ \eqref{Sev:formula} is nothing but the usual Pl\" ucker Formula for curves with at most nodes as singularities 
since $\omega_1(X)=\deg(X^*)$ in this case.

If $\mu(X)\geq 1$ is the (finite) number of secant lines to $X$ passing through  a general point of $\Sec(X)\subset\PP^N$, called the {\it secant degree} of $X\subset\PP^N$, then $\sigma(X)=\mu(X)\cdot \deg(\Sec(X))$. One expects $\mu(X)=1$ as soon as $\Sec(X)\subsetneq\PP^N$ and varieties with $\mu(X)>1$ are special and nowadays called {\it weakly defective} according to the terminology introduced by Chiantini and Ciliberto in \cite{Chiantini.Ciliberto.wd}. One also expects $\tau(X)=1$ if $\tau(X)>0$ and $\Tan(X)\subsetneq\PP^N$. In \S~\ref{sec:Roth} we shall see examples of smooth varieties $X^n\subset\PP^N$ with $N>2n $ and $\tau(X)$ arbitrary large.

\subsection{Generalities on $\omega_n$ for the case $N=2n$}\label{N=2n}
For an irreducible curve $X\subset\PP^2$ we have $\tau(X)=0$
if and only if $X$ is a line and  $\tau(X)\geq 2$ for any curve $X\subset\PP^2$ of degree $d\geq 2$. Moreover,  $\tau(X)=2$ if and only if $d=2$, i.e.
if and only if $X\subset\PP^2$ is a smooth conic. 

We now study the case $n\geq 2$ to see to what extent one can generalize the previous facts.

\begin{thm}\label{If n>=3 then tau(X)>=2}
 Let $X^n\subset \PP^{2n}$ be a variety with $\Tan(X)=\PP^{2n}$. Then $\tau(X)\geq 2$.
\end{thm}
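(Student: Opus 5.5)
The plan is to argue by contradiction. Suppose $\tau(X)=1$. Then $\overline q:T_X\to\PP^{2n}$ is a birational morphism onto a smooth, hence normal, variety. By Zariski's Main Theorem, $\overline q$ is a local isomorphism over every point of $\PP^{2n}$ whose fibre is finite. Consequently every focal point of the family of tangent spaces that lies in $T^\circ_X$ is a fundamental point. Indeed, at a focus the differential of $q$ is not injective, which is impossible where $\overline q$ is a local isomorphism, so the fibre through a focus must be positive dimensional.

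Next I will use the description of the focal locus. Since $N-n=n$, \S\ref{parTanX} and Proposition \ref{focuscar} show that on a general $T_xX$ the focal locus is the support of a hypersurface of degree $n$. It is a cone with vertex $x$ over a hypersurface $\Sigma_x\subset\PP(t_xX)\simeq\PP^{n-1}$ of degree $n$; concretely, $\Sigma_x$ is the determinant of the polar-hyperplane matrix of a basis of $|\SSx|$ (Remark \ref{remdGX}), and here $\dim|\SSx|=n-1$. By the previous paragraph all these foci are fundamental. Proposition \ref{Key observation} then gives $\Sigma_x\subseteq\Bs(|\SSx|)$, so the second fundamental form has a fixed component at a general point.

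For $n=2$ this says the two conics of $|\SSx|$ on $\PP^1$ share a common zero at a general point. For $n\geq 3$, Theorem \ref{Fixed component in Bs(|II|) implies X is a scroll over a curve} applies directly. In either case $X$ is a scroll in $\PP^{n-1}$'s over a curve; for $n=2$ one gets that the asymptotic directions are a fixed line direction, i.e.\ $X$ is ruled. So it remains to exclude $\tau=1$ for scrolls $X^n\subset\PP^{2n}$ with $\Tan(X)=\PP^{2n}$.

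Local check in the scroll case. Write $|\SSx|=\langle H\cdot l_1,\dots,H\cdot l_{n-1},H^2\text{ or }H\cdot l_n\rangle$, with $H$ cutting $\PP(t_xF)$, where $F$ is the ruling through $x$. The determinant computation $\det(aI+uv^T)=a^{n-1}(a+v^Tu)$ shows that the focal locus is indeed supported on $H=0$. So no contradiction comes from the local computation alone, and a global argument is needed.

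Global argument for scrolls (the main obstacle). All $T_yX$ with $y\in F$ contain $F$. Projecting from $F$ sends each such $T_yX$ to a point of $\PP^n$, and the map $y\mapsto T_yX$ is the restriction to $F\simeq\PP^{n-1}$ of a linear map. Its image therefore spans the $(2n-1)$-space $\langle F,F'\rangle$, where $F'$ is the "infinitely near" ruling, and the map to the hyperplane of $\PP^{2n}/F$ is birational. Hence a general $p\in\PP^{2n}$ lies on a tangent space along $F$ if and only if $p\in\langle F,F'\rangle$, and then on exactly one. This gives
\[
\tau(X)=\#\{F : p\in\langle F,F'\rangle\},
\]
the degree of the one-parameter family of hyperplanes $\langle F,F'\rangle$, i.e.\ the class of the dual curve of this family of hyperplanes. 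That class equals $1$ only if the hyperplanes form a pencil. I will try to show that a pencil forces all rulings into a fixed $\PP^{2n-2}$, or forces $X$ to be developable, contradicting $\Tan(X)=\PP^{2n}$; this is the step I expect to need the most care.
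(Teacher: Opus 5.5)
Up to the reduction to scrolls you follow the paper: Zariski's Main Theorem makes every focus fundamental, Proposition~\ref{Key observation} puts the focal cone in $\Bs(|\SSx|)$, and for $n\geq 3$ Griffiths--Harris gives a scroll. The case $n=2$, however, has a gap. A base point of the pencil $|\SSx|$ at a general $x$ only gives an asymptotic direction field, and its integral curves need not be lines (every surface in $\PP^3$ has asymptotic directions). So ``i.e.\ $X$ is ruled'' does not follow. The paper supplies the missing global step:
\begin{itemize}
\item The focal locus $\mathcal F\subset T_X^\circ$ is $3$-dimensional and, when $\tau(X)=1$, consists of fundamental points.
\item Every positive-dimensional fibre of $q$ lies in $\mathcal F$, so $\dim q(\mathcal F)\leq 2$.
\item The focal conic at $x$ is a double line $L_x$ whose direction is the base point, and $q(\mathcal F)$ contains both $X$ and $\bigcup_x L_x$. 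Hence this union equals $X$, i.e.\ $L_x\subset X$.
\item A second line of $X$ through $x$ would be a second base point and force $\dim|\SSx|=0$. So $X$ is a scroll.
\end{itemize}

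For scrolls you correctly observe that the local computation gives no contradiction: points of $\PP^{n-1}_x$ lie on $T_yX$ for every $y\in\PP^{n-1}_x$, so they really are fundamental. Your count $\tau(X)=\#\{t : p\in H_t\}$, with $H_t=\langle F_t,F_t'\rangle$, is a more explicit route than the paper's one-line conclusion. But you stop exactly at the decisive step, which closes quickly, and your first guess is the right one. Write $H_t=\PP(\ker h(t))$ and $F_t=\PP(\mathcal L(a_0(t),\ldots,a_{n-1}(t)))$.
\begin{itemize}
\item We have $h(t)(a_i(t))=0$, and also $h(t)(a_i'(t))=0$ because $H_t\supseteq T_yX$ for $y\in F_t$.
\item Differentiating the first relation gives $h'(t)(a_i(t))=0$, so $F_t\subseteq H_t\cap H_t'$.
\item If the $H_t$ form a pencil with axis $\Pi\simeq\PP^{2n-2}$, then $H_t\cap H_t'=\Pi$. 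Every ruling lies in $\Pi$, so $X\subseteq\Pi$, contradicting $\Tan(X)=\PP^{2n}$.
\end{itemize}
You should also record three small facts:
\begin{itemize}
\item $t\mapsto H_t$ is not constant, since otherwise $\Tan(X)$ would lie in a hyperplane.
\item If it is not birational onto its image, then already $\tau(X)\geq 2$.
\item The linear map $y\mapsto\pi_{F_t}(T_yX)$ from $F_t$ to $\PP^n$ is injective. Otherwise the tangent spaces along $F_t$ span at most a $\PP^{2n-2}$ and $\dim\Tan(X)\leq 2n-1$.
\end{itemize}
With these two repairs the argument is complete. Your scroll step then shows directly that every scroll $X^n\subset\PP^{2n}$ with $\Tan(X)=\PP^{2n}$ has $\tau(X)\geq 2$.
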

\begin{proof}
 Since $\Tan(X)=\PP^{2n}$ we have $\tau(X)\geq 1$. For $n=1$, the claim is true. Suppose now $n\geq 2$ and  $\tau(X)=1$. Then every focal point of the family of tangent spaces to $X$ is fundamental by Zariski's Main Theorem applied to $q:T_X\to \Tan(X)=\PP^{2n}$. 
     Let $x\in X$ be general and let $F_x \subset T_x X$ be the focal hypersurface. We know that $(F_x)_{\red}$ is a cone with vertex $x$ over a base $(\overline{F_x})_{\red}\subset\PP(t_xX)\subset T_xX$ with $\dim((\overline{F_x})_{\red})=n-2$ by Proposition \ref{focuscar}. 
     
     Moreover, since every point $r\in (F_x)_{\red}\setminus x$ is fundamental, we deduce $(\overline{F}_x)_{\red}\subseteq \Bs|\SSx|$ by Proposition \ref{Key observation}.
  If $n\geq 3$, then  $X$ is a scroll over a curve by Theorem \ref{Fixed component in Bs(|II|) implies X is a scroll over a curve} and  $(F_x)_{\red}=\PP_x^{n-1}\subseteq (T_x X\cap X)_{\red}\subset X$ is a general linear space in the ruling of $X$. Since through a general point of $\PP^{n-1}_x$ there are no other linear spaces of the ruling, we get a contradiction.

Suppose now $n=2$.    Since $\dim(\Tan(X))=4$, we have $\dim(|\SSx|)=1$ by  \eqref{dimTXssecond}, showing that $(F_x)_{\red}$ consists of a line $L_x$ tangent to $X$ at $x$.
 
    Let $F\subset\PP^4$ be the focal locus of $p:T^\circ_X\rightarrow X_{\reg}$. Then clearly $$X\subseteq \overline{\bigcup_{x\in X_{\reg}} L_x} \subseteq F.$$ Since $\dim(F)\leq 2$ by Zariski's  Main Theorem applied to $q:T_X\to \Tan(X)=\PP^4$, we have equality in the first inclusion. From this one deduces  that $X\subset\PP^4$ is indeed  a scroll over a curve because a line through $x$,  contained in $X$ and different from $L_x$ would produce another base point in $\Bs(|\SSx|)$, which is impossible. Then, as above, we get a contradiction.
\end{proof}

If $\Tan(X)=\PP^{2n}$, then $\Sec(X)=\PP^{2n}$. By Terracini's Lemma two general tangent spaces $T_{x_1}X$ and $T_{x_2}X$ intersect  at a single point. Fixing a general $x\in X$ we
 define the {\it Scorza map at $x$} for any variety $X^n\subset\PP^{2n}$ with $\Sec(X)=\PP^{2n}$:
$$\Sc_x:X\map T_xX,$$
by $\Sc_x(y)=T_yX\cap T_xX$  for $y\in Y$ general (see also \cite[\S 5]{Chiantini.Ciliberto.Russo} where this map is used in a related context). Clearly $\deg(\Sc_x)=\tau(X)-1$ if $\Tan(X)=\PP^{2n}$ because through a general point of $T_xX$
there pass $\tau(X)-1$ tangent spaces to $X$ at smooth points  different from $T_xX$.
We isolate the following consequence  for further reference.

\begin{lem}\label{tau(X)=2 implies X rational}
     Let $X^n\subset \PP^{2n}$ be a non-degenerate variety with $\Tan(X)=\PP^{2n}$. If $\tau(X)= 2$, then $X$ is birational
     to a general tangent space to $X$  via the Scorza map. 
\end{lem}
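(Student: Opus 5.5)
The plan is to show that the Scorza map $\Sc_x:X\map T_xX$, defined for $x\in X$ general, is a dominant rational map of degree one. By the remark preceding the statement, $\deg(\Sc_x)=\tau(X)-1$ whenever $\Tan(X)=\PP^{2n}$. So for $\tau(X)=2$ the map is generically injective, and it remains to prove dominance, generic finiteness and birationality in characteristic zero.

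First, I will check that $\Sc_x$ is well defined. Since $\Tan(X)\subseteq\Sec(X)$, the hypothesis $\Tan(X)=\PP^{2n}$ gives $\Sec(X)=\PP^{2n}$. Terracini's Lemma then says that for general $y\in X$ the spaces $T_xX$ and $T_yX$ span $\PP^{2n}$. Since both are $n$-dimensional, they meet in exactly one point, and $y\mapsto T_yX\cap T_xX$ is a rational map on a dense open subset of $X$. In coordinates it is given by solving a linear system whose coefficients depend regularly on $y$.

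Second, I will prove dominance by exploiting the fact that $\tau(X)\geq 1$ is finite. Take $z\in T_xX$ general. Since $\Tan(X)=\PP^{2n}$ and $\widetilde q$ is generically finite of degree $\tau(X)=2$, through $z$ pass exactly two tangent spaces at smooth points. One of them is $T_xX$; call the other $T_yX$, with $y\neq x$. The point $y$ is general in $X$, because the correspondence $\{(y,z): z\in T_yX\cap T_xX\}$ projects dominantly to $T_xX$. Hence $\Sc_x(y)=z$, and $\Sc_x$ is dominant. The fibre $\Sc_x^{-1}(z)$ consists of the points $y$ with $z\in T_yX$ and $y\neq x$. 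Away from the locus where the Gauss map fails to be injective, this fibre has cardinality $\tau(X)-1=1$. Here I use that $X$ is non-developable, because $\dim\Tan(X)=2n$; by the remark on the Gauss defect, $\mathcal G_X$ is therefore birational onto its image, so distinct general $y$ give distinct tangent spaces.

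The main obstacle is this identification of the generic fibre of $\Sc_x$ with the tangent spaces through $z$ other than $T_xX$. I must make sure that a general point $z$ of $T_xX$ behaves like a general point of $\PP^{2n}$ as far as $\widetilde q$ is concerned. It cannot lie in the focal locus, since that locus meets $T_xX$ in a proper cone by Proposition \ref{focuscar}, and it cannot lie in the branch locus in any worse way. Thus the fibre $\widetilde q^{-1}(z)$ has exactly $\tau(X)$ reduced points, one of which lies over $[T_xX]$. Once this is settled, $\Sc_x$ is a dominant, generically injective rational map between $n$-dimensional varieties over $\mathbb C$, hence birational. In particular $X$ is rational.
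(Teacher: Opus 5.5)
Your route is the paper's. There the lemma is read off from $\deg(\Sc_x)=\tau(X)-1$, which the paper calls clear because through a general point of $T_xX$ pass $\tau(X)-1$ other tangent spaces at smooth points. Your checks of well-definedness (Terracini), of birationality of the Gauss map, and of a general $z\in T_xX$ being a general point of $\PP^{2n}$ are fine. For the last one, the clean reason is that $(x,z)$ is a general point of the irreducible variety $T_X$, which dominates $\PP^{2n}$; the remark about the branch locus is not needed.

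The step whose justification fails is dominance of $\Sc_x$. That $\{(y,z): z\in T_yX\cap T_xX\}$ maps onto $T_xX$ only gives, for general $z\in T_xX$, some $y$ with $z\in T_yX$. It does not show that $y$ lies in the domain of $\Sc_x$, i.e.\ that $T_xX\cap T_yX=\{z\}$ rather than a positive-dimensional linear space through $z$. That is exactly the assertion that $\Sc_x$ is dominant, so the inference is a non sequitur; the paper's ``clearly'' passes over the same point.

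You can close the gap using Proposition~\ref{focuscar}.
\begin{itemize}
\item Work in an affine chart containing $y$ and $z=\Sc_x(y)=y+\mu$. Since $T_xX\cap T_yX=\{z\}$, the direction spaces satisfy $t_xX\oplus t_yX=\mathbb C^{2n}$.
\item Differentiating $z(t)\in T_xX\cap T_{y(t)}X$ shows that $(d\Sc_x)_y(w)$ is the vector of $t_xX$ whose class in $N_yX$ is $\widetilde{\varphi}_y(\mu,w)$. Hence $(d\Sc_x)_y$ is injective if and only if $z$ is not a focus of $T_yX$.
\item Now let $y=\mathbf{\Phi}(\epsilon v)$ with $v$ general, so that $\widetilde{\varphi}_x(v,\cdot)$ is injective; this is where $\dim\Tan(X)=2n$ enters.
\item Expanding to second order, the condition $\mathbf{\Phi}(\epsilon v)+\sum_j\mu_j\partial_j\mathbf{\Phi}(\epsilon v)\in T_xX$ becomes $\epsilon\,\widetilde{\varphi}_x(v,\frac{\epsilon}{2}v+\mu)+O(\epsilon^3+\epsilon^2|\mu|)=0$.
\item This is affine-linear in $\mu$, with linear part $\epsilon\,\widetilde{\varphi}_x(v,\cdot)+O(\epsilon^2)$, which is invertible for $0<|\epsilon|\ll 1$. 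So $T_xX\cap T_yX$ is a single point, given by $\mu=-\frac{\epsilon}{2}v+O(\epsilon^2)$.
\item By continuity $\widetilde{\varphi}_y(\mu,\cdot)$ is injective. Thus $\Sc_x$ has injective differential at such $y$, hence is dominant.
\end{itemize}
Together with $\deg(\Sc_x)=\tau(X)-1=1$, this gives that $\Sc_x$ is birational.
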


\subsection{Examples of $X^n\subset\PP^{2n}$ with $\tau(X)=2$}

We shall  present several series of examples for every $n\geq 2$.

\begin{exam}\label{QELex} 
We define the  {\it secant defect  of $X$} as $\delta(X)=2n+1-\dim(\Sec(X))\geq 0$. For $X^n\subset \PP^N$ and for a general $p\in \Sec(X)$ 
the {\it entry locus of $X$ with respect to $p$} is: 
$$\Sigma_p=\overline{\{x_1\in X\;:\;\exists\; x_2\in X\setminus x_1\mbox{ with }p\in\langle x_1,x_2\rangle\}}\subseteq X.$$
The generality of $p\in \Sec(X)$ assures that $\Sigma_p\subseteq X$ is equidimensional of dimension $\delta=\delta(X)$. 

Since $\deg(\Sigma_p)\geq 2$, the varieties with the {\it simplest} secant behaviour are those for which $\deg(\Sigma_p)=2$. If $\delta(X)=0$, this means that through
a general point of $\Sec(X)$ there passes a unique secant line to $X$ (the expected behavior). 

A projective variety  $X^n\subset\PP^{N}$ is said to be a {\it $QEL$-variety of type $\delta\geq 0$},  i.e. a variety with quadratic entry locus of type $\delta$, if
$\Sigma_p\subset X$ is a smooth quadric hypersurface of dimension $\delta=\delta(X)$ for $p\in \Sec(X)$ general. 

Here the relevant  remark for us is the following: if $X^n\subset\PP^{2n}$ is a $QEL$-variety of type $\delta=1$, then $\tau(X)=2$. Indeed, 
the tangent lines to $X$ at smooth points passing through $p$ are tangent lines to $\Sigma_p$ and
viceversa. Since $\tau(\Sigma_p)=2$ the claim follows.

Let us give some examples of $QEL$--varieties. For $n\geq 1$ a smooth rational normal scroll $$S(a_1,\ldots,a_n)=\PP(\bigoplus_{i=1}^n\mathcal O_{\PP^1}(a_i))\subset \PP^{2n}$$ embedded via $\mathcal O_{\PP(\mathcal E)}(1)$ and with $a_1+\ldots+a_n=n+1$ is a $QEL$-variety with $\delta=1$. The smoothness assumption implies $a_i\geq 1$ for every $i=1,\ldots,n$.

We shall see in Theorem \ref{class:omega_2=2} below  that the unique smooth surface in $\PP^4$ with $\tau=2$ is $S(1,2)$, up to projective equivalence.
 For $n\geq 3$ there exist smooth $QEL$-varieties $X^n\subset\PP^{2n}$ of type $\delta=1$ which are not scrolls over $\PP^1$. For example, consider $\mathbb G(1,4)\subset\PP^9$, which is a $QEL$-variety of type $\delta=4$. Then a general three-dimensional linear section $X^3\subset\PP^6$ of $\mathbb G(1,4)\subset\PP^9$. Since $\mathbb G(1,4)\subset\PP^9$ is a $QEL$-variety of type $\delta=4$, we deduce that  $X^3\subset\PP^6$  is a smooth $QEL$-variety of type $\delta=1$ with $\Pic(X)\simeq \mathbb Z\langle\mathcal O_X(1)\rangle$ and $\omega_X^*\simeq \mathcal O_X(2)$ (i.e. it is a  prime del Pezzo threefold).

A similar example is a  general five-dimensional linear section $X^5\subset\PP^{10}$ of the spinor variety $S^{10}\subset\PP^{15}$, which is a homogeneous $QEL$-variety of type $\delta=6$. In this case $X^5\subset\PP^{10}$  is a smooth  $QEL$-variety of type $\delta=1$ with $\Pic(X)\simeq \mathbb Z\langle\mathcal O_X(1)\rangle$ and  $\omega_X^*\simeq \mathcal O_X(3)$ (i.e. it is a prime  Mukai fivefold).
\end{exam}

\begin{rem}\label{Johnsontau} If $N\geq n+2$, a general secant line is not trisecant
by the well-known Trisecant Lemma. Thus the projection  $\pi_p:\PP^N\map\PP^{N-1}$ from a general $p\in\PP^N$ induces a $2:1$ morphism from $\Sigma_p$ onto its image. The difficulty in computing $\tau(X)$ for an arbitrary $X^n\subset\PP^{2n}$ is due to the fact that the curve $\Sigma_p$ may be very singular and reducible, preventing the application of  Riemann-Hurwitz formula to 
the covering $\pi_p:\Sigma_p\to \pi_p(\Sigma_p)$ in order to determine the number of smooth ramification  points, which is equal to $\tau(X)$. The problem is similar to the 
computation of the degree of the dual curve of an irreducible  curve $C\subset\PP^2$, which is deeply affected by the complexity of its  singularities (the Pl\" ucker formulas work effectively only if $C$ has mild singularities).

When $X^n\subset\PP^N$ is smooth or at has  a finite number of nodal singularities  at worst, one can  prove that $\tau(X)$ is even by applying Riemann-Hurwitz formula on the 
desingularizations of $\Sigma_p$ and of  $\pi_p(\Sigma_p)$.
\end{rem}

For smooth varieties we have the following result.

\begin{pr}\label{smoothtau2} Let $X^n\subset\PP^{2n}$ be a smooth non-degenerate  variety of dimension $n\geq 1$. Then $\tau(X)=2$ if and only if $X^n\subset\PP^{2n}$ is a $QEL$-manifold of type $\delta=1$.
\end{pr}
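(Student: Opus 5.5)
The implication ``$QEL$ of type $\delta=1$ $\Rightarrow$ $\tau(X)=2$'' is the argument already given in Example \ref{QELex}: if $\Sigma_p$ is a smooth conic, then the tangent lines to $X$ at smooth points through $p$ are exactly the two tangent lines to the conic $\Sigma_p$ through $p$. So only the converse needs proof. Assume $X^n\subset\PP^{2n}$ is smooth with $\tau(X)=2$. Then $\tau(X)>0$, so $\Tan(X)=\Sec(X)=\PP^{2n}$ and $\delta(X)=1$, and for general $p\in\PP^{2n}$ the entry locus $\Sigma_p$ is a curve. By Remark \ref{Johnsontau}, the projection $\pi_p$ restricts to a $2:1$ map $\Sigma_p\to\Gamma_p:=\pi_p(\Sigma_p)$. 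The task is to show that $\Sigma_p$ is a smooth conic.

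\textbf{Step 1: ramification equals the tangent lines through $p$.} Consider the induced degree-$2$ map $\nu:\widetilde{\Sigma}_p\to\widetilde{\Gamma}_p$ between normalizations. I first check that its ramification is exactly accounted for by tangent lines through $p$.
\begin{itemize}
\item If $\nu$ ramifies at a branch centred at $x\in\Sigma_p$, then the line $\langle x,p\rangle$ is tangent to that branch, hence lies in $T_xX$. Since $X$ is smooth, $x$ is a smooth point of $X$.
\item Conversely, each tangent line to $X$ through $p$ is a limit of secant lines through $p$. Its point of tangency therefore lies in $\Sigma_p$ and gives a ramification point of $\nu$.
\item By the generality of $p$ (in particular, $q:T_X\to\PP^{2n}$ is \'etale over $p$), each such point is a simple ramification point.
\end{itemize}
Hence the ramification divisor $R$ of $\nu$ has degree exactly $\tau(X)=2$.

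\textbf{Step 2: $\Sigma_p$ is irreducible.} If $\Sigma_p$ were reducible, its components would be interchanged in pairs by the covering involution, so $\nu$ would be unramified. This contradicts $\deg R=2$. Thus $\Sigma_p$ is irreducible, and Riemann--Hurwitz gives $g(\widetilde\Sigma_p)=2g(\widetilde\Gamma_p)$.

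\textbf{Step 3: $\Sigma_p$ is a conic (main obstacle).} It remains to show that $\Sigma_p$ is a plane conic, i.e. that $\Gamma_p$ is a line. I would argue via the two tangency points $x_1,x_2\in\Sigma_p$ and the plane $\Pi=\langle x_1,x_2,p\rangle$.
\begin{enumerate}
\item For a general secant line $\langle y_1,y_2\rangle$ through $p$ with $y_i\in\Sigma_p$, Terracini's Lemma gives that $T_{y_1}X\cap T_{y_2}X$ is a single point.
\item The Scorza map $\Sc_{y_1}$ is birational by Lemma \ref{tau(X)=2 implies X rational}. Restricting it to $\Sigma_p$, I want to show that $T_{y}X\cap\langle\Sigma_p\rangle$ is a tangent line to $\Sigma_p$ for $y\in\Sigma_p$ general.
\item The tangent lines of $\Sigma_p$ at two general points then meet, which forces $\Sigma_p$ to be a plane curve.
\item A plane curve whose general secant lines through the fixed point $p$ are only bisecant (trisecants being excluded by the Trisecant Lemma) is a conic. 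Smoothness follows from $\deg R=2$: a singular conic would be a pair of lines, which Step 2 excludes.
\end{enumerate}
The delicate point is items 2–3, namely proving that $\langle\Sigma_p\rangle$ is a plane. If the Scorza-map argument does not go through, my fallback is to invoke the known structure theory of entry loci of smooth varieties with $\delta=1$ (the entry locus is a connected curve whose normalization maps with the above properties). Rationality of $\widetilde\Gamma_p$ then forces $g(\widetilde\Sigma_p)=0$ by Step 2, and degree considerations give $\deg\Sigma_p=2$.
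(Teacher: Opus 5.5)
\textbf{There is a genuine gap in Step 3.} Step 3 is where the whole converse lives, and it is not carried out.

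Item 2, that $T_yX\cap\langle\Sigma_p\rangle$ is the tangent line of $\Sigma_p$, is essentially equivalent to the planarity you want, and you give no argument for it. Item 3 needs the tangent lines at two \emph{general} points of $\Sigma_p$ to meet. The only pairs your Terracini/Scorza set-up controls are $y,\iota(y)$ on a secant line through $p$. For \emph{any} curve mapped $2:1$ by $\pi_p$, the tangent lines at $y$ and $\iota(y)$ both project onto the tangent line of $\Gamma_p$ at $\pi_p(y)$. So they always lie in a plane through $p$ and meet, and no planarity follows.

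The fallback is not an argument. Nothing shows $\widetilde\Gamma_p$ is rational, and even a rational $\Sigma_p$ could have any degree. The unspecified ``degree considerations'' are precisely the missing input.

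Steps 1--2 also have holes:
\begin{itemize}
\item Components of $\Sigma_p$ need not be exchanged in pairs by the involution. For the isomorphic projection of the Veronese surface to $\PP^4$, $\Sigma_p$ is three conics, each mapped $2:1$ onto a line. So $\deg R=2$ does not give irreducibility unless you rule out unramified invariant components.
\item A tangency point could a priori be a node of $\Sigma_p$ whose two branches are swapped by the involution, rather than a ramification point.
\end{itemize}

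\textbf{The missing idea.} The paper uses (via Johnson, or Severi's formula) that $\deg\Sigma_p=\tau(X)$ for smooth $X^n\subset\PP^{2n}$ with $n\geq 2$. This follows from \eqref{Sev:formula} in three steps.
\begin{itemize}
\item Since $X$ is smooth in $\PP^{2n}$, $\sigma(X)=0$, so $d(d-1)=\sum_{i=1}^n\omega_i(X)$.
\item Take a general hyperplane $H\ni p$. Then $p$ is general in $H$, and the points of $\Sigma_p\cap H$ pair off along the secant lines of $X\cap H$ through $p$.
\item Hence $\deg\Sigma_p=2\sigma(X\cap H)=d(d-1)-\sum_{i=1}^{n-1}\omega_i(X)=\omega_n(X)=\tau(X)$, the last equality because $\Tan(X)=\PP^{2n}$.
\end{itemize}

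With $\tau(X)=2$, $\Sigma_p$ is a reduced curve of degree two, and the reducible cases are excluded directly:
\begin{itemize}
\item Two skew lines admit only one secant through $p$, contradicting $\delta=1$.
\item Two concurrent lines give only one line through $p$ (the one through the node) along which secant endpoints coalesce. But the two tangency points are distinct, and each arises as such a limit.
\end{itemize}
So $\Sigma_p$ is a smooth conic, with no Riemann--Hurwitz analysis needed. For $n=1$ the statement is immediate, since $\tau(X)=d(d-1)$ for a smooth plane curve; there your $2:1$ claim also fails.
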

\begin{proof}
We have seen above that  $QEL$-manifolds $X^n\subset\PP^{2n}$ of type $\delta=1$  have $\tau(X)=2$. Suppose now $\tau(X)=2$. Let $p\in\PP^{2n}$ be general and let notation be as in Example \ref{QELex}.
The main results in \cite[\S 5.2]{Johnson} yield $\deg(\Sigma_p)=\tau(X)$ since $X\subset\PP^{2n}$ is smooth, a fact which can also be deduced from Severi's Double Point Formula \eqref{Sev:formula}. Then $\Sigma_p\subset X$ is an equidimensional curve of degree two
such that through $p$ there pass two lines which are limits of secant lines to $\Sigma_p$ (and tangent lines to $X$) and thus it is necessarily  a smooth conic.
\end{proof}
\begin{cor}\label{classtau2smooth} Let $X^n\subset\PP^{2n}$ be a smooth  variety with $\tau(X)=2$. Then either $X^n\subset\PP^{2n}$
is projectively equivalent to $S(\underbrace{1,\ldots,1}_{n-1},2)$ or $X^n\subset\PP^{2n}$ is a linearly normal rational Fano manifold with $\Pic(X)\simeq\mathbb Z\langle\mathcal O_X(1)\rangle$
and index $i(X)=(n+1)/2$.

In particular, if $n=2m\geq 2$, then  $X^{2m}\subset\PP^{4m}$ is projectively equivalent to $S(1,\ldots,1,2)$.
If $n=3$, respectively  $5$, then either $X^n\subset\PP^{2n}$ is a rational normal scroll as above or it is one of the two Fano manifolds of dimension 3, respectively 5, described at the end of Example
\ref{QELex}.
\end{cor}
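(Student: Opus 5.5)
By Proposition \ref{smoothtau2}, $X^n\subset\PP^{2n}$ is a $QEL$-manifold of type $\delta=1$. The plan is to feed this into the structure theory of $QEL$-manifolds developed by Russo and Ionescu--Russo, as collected in \cite{Russo.Book2016}.

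The first step is to recall the divisibility property. If $X^n\subset\PP^N$ is a $QEL$-manifold of type $\delta\geq 1$, then through a general point $x\in X$ the variety of lines $\mathcal L_x\subset\PP^{n-1}$ is smooth, of dimension $(n+\delta)/2-1$, and non-empty. Moreover either $X$ is a rational normal scroll, or $\Pic(X)\simeq\mathbb Z\langle\mathcal O_X(1)\rangle$, $X$ is a Fano manifold of index $i(X)=(n+\delta)/2$, and it is rational (rationality also follows from Lemma \ref{tau(X)=2 implies X rational}). Setting $\delta=1$, in the non-scroll case we get $n$ odd and $i(X)=(n+1)/2$.

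The second step treats the scroll case. Here $X=S(a_1,\dots,a_n)\subset\PP^{2n}$ is smooth, so $a_i\geq 1$ and $\sum a_i=\deg X=2n-n+1=n+1$. This forces exactly one $a_i=2$ and the rest equal to $1$, giving $S(1,\ldots,1,2)$.

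Linear normality in the Fano case comes from Zak's linear normality theorem. If $X$ were an isomorphic projection of some $X'\subset\PP^{2n+1}$, then $\Sec(X')$ would be a proper subvariety of dimension $2n$, which contradicts the $QEL$ structure with $\delta=1$; alternatively it is part of the $QEL$ classification package. For $n=2m$ even, the non-scroll case is excluded by parity, so only $S(1,\ldots,1,2)$ remains.

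For $n=3$ and $n=5$ I will invoke the known classifications. When $n=3$ we have index $2$ and Picard number one, so $X$ is a del Pezzo threefold; the condition that it lies in $\PP^6$ gives degree $5$, and the only such threefold is the linear section of $\mathbb G(1,4)$ (Fujita). When $n=5$ we have index $3$, so $X$ is a Mukai fivefold in $\PP^{10}$ of coindex $3$. The genus is computed from $h^0(\mathcal O(1))=11$ via $h^0=g+n-1$, giving $g=7$, hence degree $12$, and Mukai's classification identifies $X$ as the linear section of $S^{10}$.

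I expect the main obstacle to be justifying the dichotomy in the first step, namely the claim that a $QEL$-manifold of type $\delta=1$ is either a scroll or has Picard number one. This rests on the divisibility and conic-connectedness theorems of Ionescu--Russo, where $X$ is covered by conics through two general points and Picard number one holds unless $X$ is a scroll over a curve. The work here is to cite those results precisely rather than reprove them.
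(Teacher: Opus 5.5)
\textbf{Gap: the dichotomy in your first step.} The step that fails is the one you single out as the main obstacle. As stated, the dichotomy is false, and the Ionescu--Russo theorem does not say what you attribute to it.

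Two counterexamples:
\begin{enumerate}
\item $\nu_2(\PP^n)$, $n\geq 2$, is a $QEL$-manifold of type $\delta=1$ (its entry loci are the conics $\nu_2(\ell)$). It is not a scroll, and $\Pic(X)\not\simeq\mathbb Z\langle\mathcal O_X(1)\rangle$, so the Parity Theorem does not apply to it. Note that ``Picard number one'' is not enough for the index computation.
\item A general hyperplane section $X^3\subset\PP^7$ of $\PP^2\times\PP^2\subset\PP^8$ is a $QEL$-manifold of type $\delta=1$ (hyperplane sections lower the type by one, as in Example \ref{QELex}). It has Picard number two and is not a scroll over a curve.
\end{enumerate}

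What Ionescu--Russo actually prove is this: a conic-connected manifold has $\Pic(X)\simeq\mathbb Z\langle\mathcal O_X(1)\rangle$ unless it belongs to an explicit list. The list consists of projections of $\nu_2(\PP^n)$ (including $\Bl_{\PP^s}\PP^n$ embedded by the quadrics through $\PP^s$), projections of Segre varieties $\PP^a\times\PP^{n-a}$, and hyperplane sections of $\PP^a\times\PP^{n+1-a}$. The dichotomy in the corollary holds only because $X$ is a linearly normal non-degenerate $n$-fold in $\PP^{2n}$ with $\Sec(X)=\PP^{2n}$. Your first step never uses this.

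\textbf{Repair (the paper's route).} First prove linear normality for every $X$, not only in the Fano case, then run through that list.

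Your justification of linear normality also needs fixing. Zak's linear normality theorem gives nothing in $\PP^{2n}$ for $n\geq 2$; indeed isomorphic projections of $\nu_2(\PP^n)$ into $\PP^{2n}$ exist. The contradiction in your projection argument is also left unexplained. It actually comes from $\tau(X)=2$. Suppose $X$ were the isomorphic projection of $X'\subset\PP^{2n+1}$ from $p\notin\Sec(X')$. Then $\Tan(X')=\Sec(X')$ would be a hypersurface not through $p$. It is singular along $X'$, hence of degree at least three by Proposition \ref{deg(TX)=3 implies TX=SX is a hypersurface}. This gives $\tau(X)=\deg(\Tan(X'))\cdot\tau(X')\geq 3$, a contradiction.

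With linear normality in hand, a dimension count shows which members of the Ionescu--Russo list are linearly normal non-degenerate $n$-folds in $\PP^{2n}$:
\begin{enumerate}
\item $\Bl_{\PP^{n-2}}\PP^n$ and smooth hyperplane sections of $\PP^1\times\PP^n$, both equal to $S(1,\ldots,1,2)$;
\item $\PP^2\times\PP^2\subset\PP^8$, which is excluded because its tangent variety is a cubic hypersurface.
\end{enumerate}
In the remaining case $\Pic(X)\simeq\mathbb Z\langle\mathcal O_X(1)\rangle$, and the Parity Theorem gives $i(X)=(n+1)/2$.

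\textbf{Minor points.} In the case $\Pic(X)\simeq\mathbb Z\langle\mathcal O_X(1)\rangle$ one has $\dim\mathcal L_x=i(X)-2=(n+\delta)/2-2$, not $(n+\delta)/2-1$; for the quintic del Pezzo threefold there are finitely many lines through a general point. This step is not needed anyway. Your treatment of $n=3,5$ (degree $5$ del Pezzo threefold, genus $7$ Mukai fivefold) is correct and simply spells out what the paper leaves to the classifications.
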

\begin{proof} Since $\tau(X)=2$ and  $X^n\subset\PP^{2n}$, then $X\subset\PP^{2n}$ is linearly normal. The first part follows from the classification of conic-connected
manifolds in \cite{IR} and  the {\it Parity Theorem} for $QEL$-manifolds (see for example \cite[Proposition 4.3.1, part 3)]{Russo.Book2016}), yielding $i(X)=(n+1)/2$. The second part is a consequence of the classification of del Pezzo and  Mukai manifolds.
\end{proof}

We are not aware of the existence of  $QEL$-manifolds $X^{2m+1}\subset\PP^{4m+2}$ of type $\delta=1$ with $m\geq 3$, $\Pic(X)\simeq \langle\mathcal O_X(1)\rangle$ and index $i(X)=m+1$.
So one may ask if  the last part of Corollary \ref{classtau2smooth} describe all smooth varieties $X^n\subset\PP^{2n}$ with $\tau(X)=2$. 

The application of the next result  will provide other interesting examples of (possibly singular) varieties $X^n\subset\PP^{2n}$ with $\tau(X)=2$. First we recall a definition. For an arbitrary variety $X\subseteq\PP^N$, the {\it vertex} of $X$, denoted by $\Ver(X)$, is the set of points $x\in X$ such that for every $x'\in X\setminus x$ we have
$\langle x,x'\rangle\subseteq X$. A variety $X\subseteq\PP^N$ such that $\Ver(X)\neq\emptyset$ is called a cone. Let us recall that $\Ver(X)$ is a linear subspace,
if non-empty.

\begin{lem}\label{omegadegTX} Let $Y^n\subset\PP^{2n+1}$ be a
variety such that $\Tan(Y)\subset\PP^{2n+1}$ is a hypersurface of degree four and such that $\tau(Y)=1$.
If $q\in \Sing(\Tan(Y))\setminus\vvert(\Tan(Y))$ and  if $Y_q\subset \PP^{2n}$ is
the projection of $Y$ from $q$, then $\tau(Y_q)=2$ and $\mult_q(\Tan(Y))=2$.
\end{lem}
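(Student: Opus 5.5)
Let $\pi_q:\PP^{2n+1}\map\PP^{2n}$ be the projection from $q$ and $Y_q=\pi_q(Y)$. The plan is to compare, for a general point $p\in\PP^{2n}$, the tangent spaces to $Y_q$ through $p$ with the tangent spaces to $Y$ meeting the line $\ell=\pi_q^{-1}(p)=\langle q,p'\rangle$ ($p'$ any point with $\pi_q(p')=p$). The count will then come from intersecting $\ell$ with the quartic $\Tan(Y)$.

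First, since $q\notin\Tan(Y)\setminus\Sing$ is not a point of $Y$ in general position for tangent spaces and $\dim(\Tan(Y))=2n<2n+1$, a general tangent space $T_yY$ does not contain $q$. Hence $\pi_q$ restricted to $Y$ is generically finite, $\dim Y_q=n$, and for $y\in Y$ general $T_{\pi_q(y)}Y_q=\pi_q(T_yY)$ is $n$-dimensional. Moreover $\pi_q$ is birational onto $Y_q$ for $q$ off $\Sec(Y)$; if $q\in\Sec(Y)$ I would note that a general secant through $q$ would force $q$ to be a smooth point of $\Tan(Y)$ or a vertex point, and argue birationality using $q\notin\vvert(\Tan(Y))$ (this is a point to check, possibly the birationality is not needed since we count tangent spaces at general points only).

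Next, $p\in T_{y'}Y_q$ with $y'=\pi_q(y)$ if and only if $\ell\cap T_yY\neq\emptyset$, since $\pi_q^{-1}(\pi_q(T_yY))=\langle q,T_yY\rangle$. So the tangent spaces to $Y_q$ through $p$ correspond to points of $\ell\cap\Tan(Y)$ other than $q$, each such point $z$ lying on exactly $\tau(Y)=1$ tangent space because $\ell$ is a general line through $q$ and hence $z$ is a general point of $\Tan(Y)$. Now $\ell$ is a general line through $q$, so $\ell\cap\Tan(Y)$ consists of $q$ with multiplicity $m=\mult_q(\Tan(Y))$ plus $4-m$ further distinct general points. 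Therefore $\tau(Y_q)=4-m$ (and in particular $\Tan(Y_q)=\PP^{2n}$ whenever $m<4$).

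Finally, $m\ge 2$ because $q$ is singular, and $m\le 3$: if $m=4$, the quartic hypersurface $\Tan(Y)$ would be a cone with vertex $q$, i.e.\ $q\in\vvert(\Tan(Y))$, contrary to hypothesis; $m=3$ gives $\tau(Y_q)=1$, contradicting Theorem~\ref{If n>=3 then tau(X)>=2} applied to $Y_q\subset\PP^{2n}$ with $\Tan(Y_q)=\PP^{2n}$. Hence $m=2$ and $\tau(Y_q)=2$. The main obstacle is the careful justification that the residual points of a general line through $q$ are general points of $\Tan(Y)$ (so each contributes exactly one tangent space at a smooth point whose image is a smooth point of $Y_q$); I would handle this by a dimension count on the family of lines through $q$, which sweep out $\Tan(Y)$ so that residual intersections form a dense set.
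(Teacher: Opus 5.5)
Your proposal is correct and is essentially the paper's own argument. A general line through $q$ meets the quartic $\Tan(Y)$ in $4-\mult_q(\Tan(Y))$ further general points, and each lies on a single tangent space since $\tau(Y)=1$, so $\tau(Y_q)=4-\mult_q(\Tan(Y))$; multiplicity $4$ is excluded by $q\notin\vvert(\Tan(Y))$ and multiplicity $3$ by Theorem~\ref{If n>=3 then tau(X)>=2}.

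The two points you left open are harmless, and the paper passes over them too. First, a general $T_yY$ avoids $q$ because $q\notin\vvert(\Tan(Y))$. Second, you do not need birationality of $\pi_{q|Y}$ in advance: your count gives $\deg(\pi_{q|Y})\cdot\tau(Y_q)=4-\mult_q(\Tan(Y))\le 2$, so $\tau(Y_q)\ge 2$ forces $\deg(\pi_{q|Y})=1$.
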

\begin{proof} Let $q\in \Sing(\Tan(Y))\setminus\vvert(\Tan(Y))$. The projection  $\pi_q:\PP^{2n+1}\map \PP^{2n}$ restricted to $\Tan(Y)$ induces a generically finite rational map $\Tan(Y)\map \PP^{2n}$ of degree
$4-\mult_q(\Tan(Y))\geq 1$. If $y\in Y$ is general, then $\pi_q(T_yY)=T_{\pi_q(y)}X$, yielding $\Tan(X)=\PP^{2n}$. 
Thus from $\tau(Y)=1$ we deduce $\tau(X)=4-\mult_q(\Tan(Y))$. Since $\mult_q(\Tan(Y))\geq 2$ and since $\tau(X)\geq 2$ by Theorem \ref{If n>=3 then tau(X)>=2},
the claims follow.
\end{proof}

\begin{exam}\label{X333} A projective variety $Y^n\subset\PP^{2n+1}$ is called an $OADP$-variety, i.e. a {\it variety with one apparent double point},
if through a general $p\in\PP^{2n+1}$ there passes a unique secant line to $Y$. The name originates from the fact that $Y_p$ acquires  a unique  {\it double point},
whose tangent cone consists of the union of the projections of the tangent spaces at the two points  in $Y$ spanning the unique secant passing through $p$. More generally, the number of secant lines to $Y$ passing through a general $p\in \PP^{2n+1}$ is the {\it number of apparent double points of $Y$}. Equivalently, a variety $Y^n\subset\PP^{2n+1}$ is an $OAPD$-variety if and only if it is a $QEL$-variety of type $\delta=0$.

For an $OADP$-variety $Y^n\subset\PP^{2n+1}$ we necessarily have $\tau(Y)=1$ by Zariski's Main Theorem. Moreover, we also have $\dim(\Tan(Y))=2n$. Indeed, the projection from a general $T_yY$ onto $\PP^n$ is birational so  that the strict transform of $y$ in $\PP^n$ is a divisor. Thus $\dim(\overline{\varphi_y(\PP(t_yY)}))=n-1$ and we can apply formula \eqref{dimTXssecond}. Therefore $OAPD$-varieties $Y^n\subset\PP^{2n+1}$ with $\deg(\Tan(Y))=4$  produce examples of  varieties $X^n\subset\PP^{2n}$ with $\tau(X)=2$ via Lemma \ref{omegadegTX}.

The only  example of an $OADP$-curve is the twisted cubic $Y\subset\PP^3$. As already known to Cayley, we have $\deg(\Tan(Y))=4$, $\Sing(\Tan(Y))_{\red}=Y$ and $X=Y_q\subset\PP^2$ is a conic for any $q\in Y$.
This elementary fact has been generalized to the class of  projective varieties $Y^n\subset\PP^{2n+1}$ which are 3-covered by twisted cubics, i.e. such that through three general points $y_1,y_2,y_3\in Y$ there passes a twisted cubic contained in $Y$. Such varieties  are $OADP$'s by \cite[Corollary 5.4]{PR1} and have  $\deg(\Tan(Y))=4$ by \cite[Lemma 5.4]{PR3} so that a general internal projection 
gives examples of $X^n\subset\PP^{2n}$ with $\tau(X)=2$ by Lemma \ref{omegadegTX}.

The latter class of varieties includes: smooth rational normal scrolls $Y^n\subset\PP^{2n+1}$; the Segre embedding of $Y^n=\PP^1\times Q^{n-1}\subset\PP^1\times \PP^n$, $n\geq 2$, where $Q^{n-1}\subset\PP^n$ is an irreducible quadric hypersurface; four homogeneous varieties $Y^n$ with $n=3, 5, 9, 27$ and many other singular examples produced
as {\it twisted cubics over a cubic Jordan algebra} (see \cite{PR1, PR2, PR3, Russo.Book2016}).

Let us remark that the projection of a variety as above from a point $q\in Y$ is a variety $X^n\subset\PP^{2n}$
such that through two general points there passes a smooth conic and, in fact, it is a  $QEL$-variety of type $\delta=1$ (although $X$ is singular in most cases).
\end{exam}

We shall now outline a construction due to Verra that  produces infinitely many examples of $OADP$-varieties $Y^n\subset\PP^{2n+1}$ such that $\deg(\Tan(Y))=4$ starting from
an $OADP$-variety $Z^m\subset\PP^{2m+1}$, $m<n$, with $\deg(\Tan(Z))=4$. In this way, for fixed $n\geq 2$, one can construct  examples of singular $OADP$-varieties of arbitrary large degree.

\begin{exam}\label{Verra}
Let  $Z\subset {\PP}^{2n+1}$ be a degenerate $OADP$-variety of dimension $m<n$,
which spans a linear space $V$ of dimension $2m+1$. 
Take a linear space $W\subset \PP^ {2n+1}$ 
of dimension $2(n-m)-1$ such that $V\cap W=\emptyset$.
Let 
$$S(W,Z)=\overline{\bigcup_{w\in W, z\in Z,\\ w\neq z}\langle w, z\rangle}$$ indicate the {\it join of $W$ and $Z,$} which is therefore  a cone
over $Z$ with vertex $W$. Let $Y\subset S(W,Z)$ be a non-degenerate, not secant-defective variety of dimension $n$ which intersects the general ruling $$\Pi_z:=\langle W,z\rangle\cong \PP^ {2(n-m)}$$ of $S(W,Z)$ 
along a linear subspace $P_z$ of dimension $n-m$.
This implies that the projection with center $W$, $\pi_W:\PP^{2n+1}\map V$, restricts to a dominant rational map
$\pi:Y\map Z$. If $P_{z_i}=\langle W,z_i\rangle$, $1\le i\le 2$, are the closures of two general fibers of $\pi$, then
$P_{z_1}\cap P_{z_2}=\emptyset$.

Indeed, the first claim is  clear, while the second  follows by Terracini's Lemma because $Y$  is not secant-defective and so
two general tangent spaces do not intersect. 
These varieties are called  {\it Verra varieties}.

There is an alternative definition of these varieties.  Let $Z$ be as above. 
Consider a rational map $f: Z\map \mathbb G(n-m-1, W)$ such that,
if $z_1, z_2\in Z$ are general points, then $f(z_1), f(z_2)$ are skew 
linear subspaces of $W$. Set
\[
Y:=\overline { \bigcup_{x\in Z_0}\langle x,f(x)\rangle }
\]
where $Z_0\subset Z$ is the open subset where $f$ is defined and the above property holds. Then $Y\subset\PP^{2n+1}$ is a Verra
variety.

 Verra varieties are $OADP$'s. Indeed,   let $p\in \PP^ {2n+1}$ be a general point so that $q=\pi_W(p)$
is a general point of $V$. A secant line to $Y$ through $p$ is a general secant line to
$Y$ and it projects to a general secant line to $Z$ passing through $q$. Since $Z\subset V$ is an $OADP$,
there is only one such secant line $L$ intersecting $Z$ at two points $z_1,z_2$. 
Hence all secant lines to $Y\subset\PP^{2n+1}$ 
through $p$ lie in the linear space $T=\langle L, W\rangle=S(L,W)$ of dimension $2(n-m)+1$.
Also $T$ intersects $Y$ along the two linear spaces $P_i\subset \langle z_i, W\rangle$, 
$1\le i\le 2$, of dimension $n-m$,  whose union spans
$T$. The assertion follows because there is only one secant line to $P_1\cup P_2$ passing through
$p\in T$. 

Similarly, we get  $\Tan(Y)=S(W,\Tan(Z))\subset\PP^{2n+1}.$ Therefore, if $Z^m\subset\PP^{2m+1}$ is an $OADP$-variety with
$\deg(\Tan(Z))=4$, then we can produce an infinite series of arbitrary large degree $OADP$-varieties $Y^n\subset \PP^{2n+1}$
with $\deg(\Tan(Y))=4$. Via Lemma \ref{omegadegTX} we thus obtain examples  of varieties $X^n\subset\PP^{2n}$
with $\tau(X)=2$ and having arbitrary large degree.

Starting with $Z\subset\PP^5$ a twisted cubic, we consider the cone $C_Z=S(W,Z)\subset\PP^5$ with vertex a line $W$ skew to
$V=\langle Z\rangle$
and construct Verra surfaces $Y_d\subset C_Z\subset\PP^5$ of degree $d\geq 4$ with $\Tan(Y_d)=\Tan(C_Z)=S(W,\Tan(Z))\subset\PP^5$
a degree four hypersurface. 

The first example is $Y_4=S(1,3)\subset\PP^5$, which is smooth and  is obtained by fixing an isomorphism $f:Z\to W=\PP^1$. If we project it from a point $q\in Y_4\setminus W$,
we get a smooth cubic rational scroll $S(1,2)\subset\PP^4$. If we project it from a point  $q\in C_Z\setminus (W\cup Y_d)$ we get a singular
rational scroll of degree four. 

The rational scrolls $Y_d\subset\PP^5$ obtained via  morphisms $f:Z\to W=\PP^1$ of degree $d-3\geq 2$ are singular along the line $W\subset Y_d$  and
produce singular rational scrolls of degree $d$ if $q\in C_Z\setminus (W\cup Y_d)$, respectively of degree $d-1$ if
$q\in Y_d\setminus W$.
\end{exam}
\subsection{One dimensional developable families of linear spaces}\label{dev:fam}
We shall now generalize the contents of Example \ref{dev:surf} and provide various applications.

Let $C\subset \mathbb G(r,N)$ be a smooth curve with $r\geq 1$ and
let  $\mathcal Y=\PP({\mathcal{S}}_{|C})$ with $\mathcal{S}$ the universal locally free sheaf of rank $r+1$ over $\GG(r,N)$. We get a  diagram 
\begin{equation}\label{Tkuniv}
\begin{tikzcd}
	{\mathcal Y} & {\PP^N} \\
	{C}
	\arrow["q", from=1-1, to=1-2]
	\arrow["p"', from=1-1, to=2-1]
\end{tikzcd},
\end{equation}
where $q$ the tautological morphism.  We call \eqref{Tkuniv} a non-degenerate one dimensional family of linear spaces in $\PP^N$ if  $Y=\overline{q(\mathcal Y)}\subseteq\PP^N$
has dimension $r+1$ . For $t\in C$, let $$\PP^r_t=q(p^\inv(t))\subset Y.$$ 

A non-degenerate one-dimensional family \eqref{Tkuniv} is called {\it developable} if, for $t\in C$ general and for $y\in \PP^r_t\cap Y_{\reg}$ general, the tangent space $T_yY$ does not depend on $y$ as above. The image of the Gauss map of a non-degenerate developable $Y$  is biholomorphic to $C$ because  $\PP^r_t$ is  the closure of the fiber of the Gauss map of $Y$ passing through a general point of $\PP^r_t$ for $t\in C$ general. In other words, we can say that the non-degenerate one-dimensional family $Y$ is developable if and only if $Y$ is a developable scroll over a curve, the scroll structure being given by the Gauss map. 

Reasoning as in Example \ref{dev:surf} it is immediate to see that if 

\begin{equation}\label{parY}
\mathbf x(t)=\lambda_0a_0(t)+\ldots+\lambda_ra_r(t)
\end{equation}
is a local parameterization  of $Y$, where $\lambda_i\in \mathbb C$ and
$a_i:\Delta\to \mathbb C^{N+1}$ with $\Delta\subset \mathbb C$ a small disk,
then the non-degenerate one-dimensional family is developable if and only if 
$$\dim(\mathcal L(a_0(t),\ldots, a_r(t), a_0^{\prime}(t),\ldots, a_r^{\prime}(t)))= r+2.$$

Let us recall that, letting $\PP^r_t=\mathbb P(\mathcal L(a_0(t),\ldots, a_r(t)))$,  the intersection of the {\it infinitely near space of $\PP^r_t$} defined by $C$, let us say $(\PP^r_t)^\prime$, with $\PP^r_t$ is by definition the projectivization of the kernel of the homomorphism:
$$\frac{\partial}{\partial t}:\mathcal L(a_0(t),\ldots, a_r(t))\to \frac{\CC^{N+1}}{\mathcal L(a_0(t),\ldots, a_r(t))},$$
defined by
$$\frac{\partial}{\partial t}(\lambda_0a_0(t)+\ldots+\lambda_ra_r(t))=[\frac{\partial}{\partial t}(\lambda_0a_0(t)+\ldots+\lambda_ra_r(t))]=[\lambda_0a_0^\prime(t)+\ldots+\lambda_ra_r^\prime(t)].$$
In conclusion,   the one-dimensional family \eqref{parY} is developable if and only if 
 a linear space in the family and the infinitely near space intersect in a $\PP^{r-1}_t,$
 consisting of foci of the family $\mathcal Y\to C$ of $\PP^r$'s  and also contained in $\Sing(Y)$ (see also \cite[$\S$ 2]{Griffiths.Harris.1979} and 
  \cite[$\S$ 2.2.4]{Fischer.Piontkowski.Book2001}). 
  
   More precisely, if the infinitely near $\PP^{r-1}_t$ does not depend on $t$, then  $Y\subset\PP^N$ is
  a cone over a curve with vertex $P=\PP^{r-1}_t$. If $Y\subset \PP^N$, then the infinitely near $\PP^{r-1}_t$'s describe an irreducible component
  of maximal dimension $r$ of the singular locus of $Y$.

If $Y\subset\PP^N$ is a developable scroll over a curve $C$ and if $L=\Ver(Y)=\PP^s$ with $0\leq s\leq r-2$, then $Y=S(L,Z)$ where $Z\subset \PP^{N-s-1}$ is a developable scroll over the same curve $C$ of dimension at least two and not a cone. Thus we shall concentrate on  developable scrolls over a curve $Y\subset\PP^N$ of dimension $r+1\geq 2$ that are not  cones. By \cite[Lemma (2.2)]{Griffiths.Harris.1979}, there exists a parametrization \eqref{parY} with $a_i(t)=a_0^{(i)}(t)$
for every $i=1,\ldots, r$, showing  that $Y\subset \PP^N$ is the variety of osculating $r$-spaces $\PP(\mathcal L(a_0(t), a_0^{(1)}(t),\ldots,a_0^{(r)}(t)))$ to the  curve $\PP(a_0(t))\subset\PP^N$ and that $\PP^{r-1}_t=\PP^r_t\cap(\PP^{r}_t)^\prime$ is the hyperplane of equation $\lambda_r=0$.

Under our hypothesis, the  one-dimensional family   $\PP^{r-1}_t=\PP^r_t\cap(\PP^r_t)^\prime$, $t\in U\subseteq C$, which is contained in the singular locus of $Y$ and which  is the variety of osculating $(r-1)$-spaces to the curve $\PP(a_0(t)))$, is developable with parametrization 
$$\mathbf x(t)=\lambda_0a_0(t)+\ldots+\lambda_{r-1}a_0^{(r-1)}(t).$$
Indeed, also in this case $\PP^{r-1}_t$  and  $(\PP^{r-1}_t)^\prime$ intersect in a hyperplane 
$\PP^{r-2}_t\subset \PP^r_t$ of equation $\lambda_{r-1}=0$ because 
$$[\frac{\partial}{\partial t}(\sum_{i=0}^{r-1}\lambda_ia_0^{(i)}(t))]=[\lambda_{r-1}a_0^{(r)}(t)].$$

Let 
$$Y^s=\bigcup_{t\in U} \PP^{r-s}_t.$$ Then $Y^0=Y$ and for $s\geq 1$ we have 
 $Y^{s}\subseteq \Sing(Y^{s-1})$, i.e. the singular locus of $Y^{s-1}$ contains  the variety of $(r-s)$-osculating spaces to the curve $\overline{\PP(a_0(t))}$. 
 \medskip

Considering a family of linear spaces as in \eqref{fam:lin:n:r} with $V\subseteq \GG(r,N)$ and a smooth curve $C\subset V$, we get
a one--dimensional family $p^\inv(C)\to C$. In general, this family is not developable and the existence of developable families passing through a general point of $V$ is a very
strong condition.

To wit, we can combine the above definitions and the general facts about foci to deduce the following important remark by Italiani: if $p:\mathcal V\to V$ is  a non-degenerate 
two--dimensional family of planes in $\PP^4$ (a non-degenerate {\it congruence of planes in $\PP^4$}), then  through a general point of $V$ there passes a one-dimensional developable family of planes if and only if 
the general focal conic is reducible
(see \cite[\S 2]{Italiani} and also the more recent treatment in \cite{Pedreira}).
\subsection{Classification of irreducible surfaces in $\PP^4$ with $\tau=2$}\label{omega_2=2}

The aim of this subsection is to prove the following classification result.

\begin{thm}\label{class:omega_2=2} Let $X^2\subset\PP^4$ be an irreducible surface such that $\tau(X)=2$.
Then $X$ is a scroll over a rational curve.

Moreover, if  $X$ is also smooth, then it is projectively equivalent to $S(1,2)\subset\PP^4$.
\end{thm}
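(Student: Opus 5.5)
The first step is to reduce everything to showing that $X$ is covered by lines. Since $\tau(X)=2>0$ and $N=2n=4$, we have $\Tan(X)=\PP^4$. Lemma \ref{tau(X)=2 implies X rational} then says the Scorza map $\Sc_x:X\map T_xX\simeq\PP^2$ is birational for $x\in X$ general, so $X$ is rational. If we can show that through a general point of $X$ there passes a line contained in $X$, then $X$ is ruled by lines. Since $X\neq\PP^2$, it is a scroll over a curve. The base curve is rational because it is dominated by the rational surface $X$ (Lüroth), so the first assertion follows. The smooth case then comes for free: Proposition \ref{smoothtau2} makes $X$ a $QEL$-manifold of type $\delta=1$, and Corollary \ref{classtau2smooth} with $n=2m=2$ gives $X\simeq S(1,2)$.

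To produce the lines, I will use the focal geometry of $q:T_X\to\PP^4$, which is generically étale of degree $2$. By \eqref{dimTXssecond}, $|\SSx|$ is a pencil of binary quadrics on $\PP(t_xX)\simeq\PP^1$. By Proposition \ref{focuscar} the focal locus in $T_xX$ is a conic which is a cone with vertex $x$, i.e. a pair of lines $L_1,L_2$ through $x$ (or a double line). Their directions are the Jacobian points of the pencil, that is, the directions $[\mathbf w]$ whose polars with respect to the two quadrics coincide. The ramification locus of $q$ is the threefold swept by these focal lines. Because $q$ has degree $2$, over a general point $r$ of $L_i$ the fibre of $q$ consists only of the ramified point $(x,r)$, unless $r$ is fundamental.

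I now compare this with $\Sc_x$. For general $r\in T_xX$, $\Sc_x^{-1}(r)$ is the unique second tangent plane through $r$. For $r$ general on $L_i$ there is no second tangent plane at a point of $X_{\reg}$ near a general one. Hence $L_i$ is not in the image of the open set where $\Sc_x$ is a local isomorphism, so $L_i$ must be the image of an exceptional curve of $\Sc_x^{-1}$ or be swept by the limits $T_yX\cap T_xX$ as $y\to x$. Those limits lie on the line through $x$ in the direction conjugate to the direction of approach. The plan is to analyse the limit $y\to x$ along a curve with tangent direction $\mathbf v$. This shows that points of $L_i$ arise as such limits only when the conjugate of $\mathbf v$ is the focal direction itself, which forces an asymptotic direction. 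Alternatively, one shows that the points of $L_i$ are fundamental for the family of tangent planes, and then Proposition \ref{Key observation} gives $[\mathbf w_i]\in\Bs(|\SSx|)$. Either way, the conclusion is that the pencil $|\SSx|$ has a base point.

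Once $|\SSx|$ has a base point $[\mathbf w]$ for general $x$, I will argue as at the end of the proof of Theorem \ref{If n>=3 then tau(X)>=2}. The focal locus $F\subset\PP^4$ has dimension at most $3$ and contains the surface $\overline{\bigcup_x L_x}\supseteq X$. I then want to show that $L_x\subset X$. Pick $r\in L_x\setminus x$ fundamental, together with a curve $C_r\ni x$ of points whose tangent planes contain $r$. As in the proof of Proposition \ref{Key observation}, the tangent lines $\langle y,r\rangle$ for $y\in C_r$ acquire contact of increasing order, so $L_x$ meets $X$ with infinite multiplicity at $x$ and is contained in $X$.

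The main obstacle is the third step: excluding the possibility that the general focal line $L_i$ consists of non-fundamental ramification points while $\Sc_x$ remains birational. I would first attempt a degree count. Let $B\subset\PP^4$ be the branch hypersurface; then $T_xX\cap B$ contains $2L_1+2L_2$, and comparing this with the birationality of $\Sc_x$ should show that $B$ is swept by lines lying in $X$.
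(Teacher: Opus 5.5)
\textbf{There is a genuine gap: the central step, producing a line of $X$ through a general point, is not proved, and the mechanisms you sketch for it would not work.} The reduction itself is fine. Rationality via Lemma \ref{tau(X)=2 implies X rational} and of the base curve via L\"uroth are correct. So is the smooth case via Proposition \ref{smoothtau2} and Corollary \ref{classtau2smooth}, which is a legitimate alternative to the paper's ``a smooth rational scroll in $\PP^4$ is linearly normal, hence $S(1,2)$''. To pass from one line through a general point to a scroll you should invoke non-degeneracy to exclude quadrics, not just $X\neq\PP^2$.

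\textbf{Why the third step fails.} If the direction of approach $\mathbf v$ is not focal, it has no conjugate direction, and $T_yX\cap T_xX$ simply tends to $x$. If $C_x=L_1\cup L_2$, the focal directions $[\mathbf w_1],[\mathbf w_2]$ are conjugate. So by Remark \ref{remdGX} the tangent plane infinitely near to $T_xX$ in the direction $\mathbf w_2$ meets $T_xX$ exactly along $L_1$. The points of $L_1$ are thus accounted for by the ordinary ramification of $q$ along the focal locus, and no asymptotic direction is involved. A base point of $|\SSx|$ occurs precisely when $C_x$ is a double line, and nothing in the Scorza-map comparison forces this.

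\textbf{Why the fourth step fails.} A base point alone does not give lines. The surface $(u,v)\mapsto(u,v,3v^2-2u^4,3uv-2u^3)$ has $\Tan(X)=\PP^4$ and a base point of $|\SSx|$ in the direction $(1,2u)$ at every point, yet it contains no line through its general point. The argument of Proposition \ref{Key observation} gives contact order $\geq 3$, not ``increasing'' order. The bound you need is $\dim\leq 2$ for the \emph{fundamental} locus, not $\dim\leq 3$ for the focal locus. That bound requires the points of $L_x$ to be fundamental, which is exactly the alternative you never establish.

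\textbf{The missing idea is to use $\deg q=\tau(X)=2$ at the points of $X$ themselves.} In the paper this is Segre's theory of second order foci. By Lemma \ref{lem:FF2}, a general $x\in X$ is a second order focus. Since $\PP^4$ is smooth and $\deg q=2$, a fibre of length $\geq 3$ forces $x$ to be fundamental.

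You can also see this directly. Write $X$ near $x=\mathbf 0$ as a graph $\mathbf u\mapsto(\mathbf u,f(\mathbf u),g(\mathbf u))$ over $T_xX$. Then $q(\mathbf u,\lambda)=(\mathbf u+\lambda,\,f+\lambda\cdot\nabla f,\,g+\lambda\cdot\nabla g)$. Near $(x,x)$ the fibre $q^{-1}(x)$ is therefore given by $\lambda=-\mathbf u$ and $f-\mathbf u\cdot\nabla f=g-\mathbf u\cdot\nabla g=0$. The leading terms of these two equations are $-\tfrac12$ times a basis of $|\SSx|$. If $(x,x)$ were isolated in its fibre, $q$ would be a finite, hence flat, germ of local degree $\geq 4$ between smooth $4$-dimensional germs. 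That would give at least four tangent planes through a general point near $x$, contradicting $\tau(X)=2$.

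Hence $q^{-1}(X)$ has a $3$-dimensional component $G$ containing the diagonal $\{(x,x)\}$, on which $q$ has $1$-dimensional fibres. $G$ consists of focal points and dominates $X$ via $p$, so $G\cap p^{-1}(x)$ contains a component $L_x^i$ of the focal conic. Since $q(G)\subseteq X$, this gives $L_x^i\subset X$; compare the end of the paper's proof and Lemma \ref{lem:F2}.
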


To this aim we shall briefly recall the theory of second order foci in our particular setting,
as outlined by C. Segre in \cite{Segre2}. Segre  applied this theory  to study congruences of planes in $\PP^4$ of order two, i.e.
two-dimensional families of planes in $\PP^4$ as  in \eqref{fam:lin:n:r}  such that $\deg(q)=2$ (see the final pages of \cite{Segre2}). 
The first remark by Segre is completely general. Suppose we have a diagram
as in \eqref{fam:lin:n:r} and  there exists a smooth point  $q(r)\in q(\mathcal V)$ such that $\length(q^\inv(q(r)))>\deg(q)$. Then $r$ (or $q(r)$)
is a fundamental point for the family of linear spaces (see  \cite[Lemma 1.7]{Chiantini.Ciliberto.korder} for a proof  similar
to Segre's original argument or use the general fact that a morphism
with finite fibers  onto a non-singular variety is flat). 

Segre's idea for studying families of linear spaces as in \eqref{fam:lin:n:r}, with $N=n+r$ and $\deg(q)=2$,  or subvarieties $X^n\subset\PP^{2n}$
with $\tau(X)=2$,  is to use foci $r\in \mathcal V$  to produce
a subscheme of $q^\inv(q(r))$ supported at $r$ and of length at least  three by means of the differential of the map $q$ at $r$
(for a focus we have  $\length_r q^\inv(q(r))\geq 2$ by definition). 

So, following Segre, one looks for a {\it second-order infinitely near space at r}  as a {\it focus of the family of
foci}, i.e.  an infinitely near variety of foci to the variety of foci. If this occurs, we have finally constructed {\it  a second-order focus} (see \cite{Segre2}).

\begin{exam}\label{2foci:dev}Let us see some examples that illustrate 
how the  definition of higher-order foci should be introduced under suitable hypothesis like smoothness of
the focal loci or of its support, etc. If $F_v\subset \PP^r_v$ is the focal locus of the family \eqref{fam:lin:n:r}, we shall indicate by $F^2_v\subseteq F_v$
the locus of second-order foci of the family \eqref{fam:lin:n:r}, i.e. the locus where $q$ ramifies with order at least three in some direction.

 Let $Y^n=T^{(n-1)}C\subset \PP^N$, $n\geq 2$ and $N\geq n+1$, be the $(n-1)$-osculating scroll to an irreducible non-degenerate
curve $C\subset\PP^N$. The foci of the family of $(n-1)$-osculating spaces on a general osculating space $T^{(n-1)}_pC$ is the 
hyperplane $T^{(n-2)}_pC.$ If $n\geq 3$, the foci of the family of $(n-2)$-osculating spaces on a general osculating space $T^{(n-2)}_pC$ is the 
hyperplane $T^{(n-3)}_pC$ and so on.
Thus, if $n\geq s+1$,  $T_p^{(n-s)}C$, $s\geq 2$, consists of foci of the $(s-1)$-order of the family of linear spaces $T_p^{(n-1)}C$'s. In particular, for $p\in C$ general and for $q(r)\in T^{(n-s)}_pC\setminus T^{(n-s-1)}_pC$ general, we have $\length_rq^\inv(q(r))\geq s$.

Let $Y^3=S(p,TC)\subset \PP^N$ with $C\subset \PP^{N-1}$ an irreducible non-degenerate curve and $p\not\in\PP^{N-1}$. Then the  family of planes $\langle p, T_rC\rangle$ is developable. The focal
line in the plane $\langle p, T_rC\rangle$, $r\in C$,  is $\langle p, r\rangle$. Then  $r$ is a second-order focus of the family of planes and $p$ is a fundamental point. In particular, $C$ is again contained in the locus of second-order foci.

Let $Y^3=S(L,C)\subset\PP^N$ with $L\subset\PP^N$ a line and $C\subset\PP^{N-2}$ an irreducible curve with $\PP^{N-2}\cap L=\emptyset$. The family of planes $\langle L,r\rangle$, $r\in C$,  is developable and $L$ is a locus of fundamental points. 
\end{exam}

We now concentrate on the case of the family of tangent planes to  a surface in $\PP^4$.
Let us recall that in this case, for $x\in X$ general,  the  focal conic  $C_x\subset T_xX$ is either the union of two distinct lines $L_x^i\subset T_xX$ passing through $x$ or it is supported
on a line $L_x\subset T_xX$ through $x$.

\begin{lem}\label{lem:F2} Let $X\subset\PP^4$ be a non-degenerate irreducible surface such that $\Tan(X)=\PP^4$. If $L_x^i\subseteq F^2_x$
for some $i$ and for $x\in X$ general, then $X\subset \PP^4$ is a scroll over a curve.
 \end{lem}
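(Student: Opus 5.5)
Fix a general $x\in X$ and let $L_x=L_x^i$ be the focal line in $T_xX$ that consists of second-order foci. The plan is to show first that $L_x$ is an asymptotic direction, and then that $L_x$ is actually contained in $X$. The second step gives a one-dimensional family of lines covering $X$. Once we know $L_x\subset X$ for general $x$, we conclude as in the proof of Theorem \ref{If n>=3 then tau(X)>=2}. Through a general point there then passes a line $L_x\subset X$ whose direction lies in $\Bs(|\SSx|)$. Since $\dim(|\SSx|)=1$ by \eqref{dimTXssecond}, the base locus $\Bs(|\SSx|)$ is finite, so only finitely many lines lie on $X$ through $x$. The union of these lines is all of $X$, and the family of lines has dimension one; hence $X$ is a scroll over a curve.

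\emph{Step 1: $[L_x]\in\Bs(|\SSx|)$.} Write $L_x=\langle x,[\mathbf w]\rangle$ with $[\mathbf w]\in\PP(t_xX)$. Since $L_x$ is focal, by Remark \ref{remdGX} there is a kernel direction $\mathbf v\neq 0$ with $(d\mathcal G_X)_x(\mathbf w)(\mathbf v)=\mathbf 0$, so $[\mathbf v]$ is conjugate to $[\mathbf w]$.

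The second-order focal condition says that $q$ ramifies to order at least three at the points of $L_x$ in the direction $(\mathbf v,\mathbf 0)$. I would express this in the local parametrization \eqref{parx}. Move $x$ along a curve $\gamma(t)$ with $\gamma'(0)=\mathbf v$, and follow the focal lines $L_{\gamma(t)}$. The condition that the second-order Taylor term of $q$ along $L_x$ also vanishes modulo the image of $dq$ means that the focal lines of nearby points meet $L_x$ to second order: the infinitely near focal line is coplanar with $L_x$ in the direction $\mathbf v$. This is the analogue of the computation in Example \ref{dev:surf}.

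Because this holds at every point of $L_x$, and not just at a single point, I expect it to force a developability condition. Either the ruled surface swept out by the lines $L_{\gamma(t)}$ has all its focal points behaving like fundamental points, or $\mathbf v$ is proportional to $\mathbf w$. In the first case Proposition \ref{Key observation} applies and gives $[\mathbf w]\in\Bs(|\SSx|)$ directly. In the second case $\mathbf w$ is conjugate to itself, so $[\mathbf w]$ is an asymptotic direction and again $[\mathbf w]\in\Bs(|\SSx|)$.

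\emph{Step 2: $L_x\subset X$.} The base locus is finite, so $x\mapsto [L_x]$ is a well-defined asymptotic line field on $X$. Its integral curves are the asymptotic curves. Consider the ruled surface formed by the tangent lines $L_y$ along such an integral curve $\Gamma$ through $x$. Requiring the whole line $L_x$ to consist of second-order foci should force $L_y$ to be tangent to $\Gamma$ to all orders. Concretely, I would differentiate once more and show that the osculating data of $\Gamma$ lie in $L_x$, so that $\Gamma$ is a line and hence $\Gamma=L_x\subset X$.

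I expect Step 2, together with the passage from the pointwise length-three condition to a genuine geometric statement, to be the main obstacle. The approach I would try first is to write $X$ locally as $(u,v,f_1(u,v),f_2(u,v))$ with $\mathbf w=\partial_u$, so that $\SSx$ is spanned by $f_{1,uu}$, $f_{1,uv},\dots$ evaluated at the origin. I would then impose the vanishing of the third-order expansion of $q$ along $\lambda\mathbf w$ for all $\lambda$, and check that this yields $\partial_u^k f_j=0$ along the curve $v=0$. That vanishing says exactly that $X$ contains the line $L_x$.
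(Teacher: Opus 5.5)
\textbf{There is a genuine gap: both steps that carry the content are unproved.} Your outline ends where it should (lines through general points, and uniqueness from $\dim|\SSx|=1$).

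\emph{Step 1.} The dichotomy ``either the focal points of $\bigcup_t L_{\gamma(t)}$ behave like fundamental points, or $\mathbf v\in\mathcal L(\mathbf w)$'' is asserted, not derived. Proposition \ref{Key observation} needs points of $L_x$ that lie on infinitely many tangent planes $T_yX$. You only have that if the lines $L_{\gamma(t)}$ are actually constant, not merely infinitesimally close to $L_x$.

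\emph{Step 2.} By your own account this is a hope (``should force''). The concrete computation you propose cannot work as stated. A third-order condition on $q$ at the single point $x$ constrains only finitely many derivatives of the $f_j$ at the origin, so it can never give $\partial_u^k f_j=0$ for all $k$ along $v=0$. You would have to use the condition at every point of an open set and integrate, and that is exactly the missing global input.

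Step 1 is also unnecessary: once $L_x\subset X$, its direction is automatically asymptotic.

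\textbf{The missing idea is a dimension count, which is the paper's argument.} Let $\mathcal F_i\subset T_X$ be the threefold swept out by the lines $L^i_x$. That every point of $L^i_x$ is a second-order focus for general $x$ means $q_{|\mathcal F_i}$ ramifies everywhere. By generic smoothness, $F_i=q(\mathcal F_i)$ then has dimension at most two. Since $x\in L^i_x$, we have $X\subseteq F_i$, and $F_i$ is irreducible, so $F_i=X$ and $L^i_x\subset X$ for general $x$. Your last paragraph then finishes the proof.

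\textbf{You can also close the gap in your own language.} The conjugate direction $\mathbf v$ spans $\ker((d\mathcal G_X)_x(\mathbf w))$, so it depends only on $[\mathbf w]$ and is the same at every $r\in L_x\setminus x$. The second-order condition says the kernel of $dq_r$ is tangent to $\mathcal F_i$. Hence the characteristic map of the two-dimensional family of lines $\{L^i_y\}_{y\in X}$ kills $\mathbf v$ at every point of $L_x$. The section of $N_{L_x/\PP^4}$ defined by $\mathbf v$ therefore vanishes identically, and $x\mapsto[L^i_x]\in\GG(1,4)$ has rank at most one.

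So the infinitely near line in direction $\mathbf v$ is $L_x$ itself, not just coplanar with it, and the lines form a one-parameter family. Their union is at most two-dimensional and contains $X$, hence equals $X$. No coordinate computation is needed.
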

\begin{proof} Let $F_i\subset\PP^4$ be the image via $q$ of the locus $\mathcal F_i\subset T_X$ given by the closure of the lines $L_x^i\subseteq F^2_x$, $x\in X$ general. Under our hypothesis, the morphism  $q_{|\mathcal F_i}:\mathcal F_i\to F_i$
ramifies everywhere, yielding $\dim(F_i)\leq 2$. From $X\subseteq F_i$ we deduce that $X$ is an irreducible component of $F_i$. Then
$$X\subseteq \overline{\bigcup_{x\in U\subseteq X}L_x^i}\subseteq F_i$$ 
so that equality holds in the first inclusion. Since $\dim(\Tan(X))=4$ implies $\dim(|\SSx|)=1$, we deduce that through a general $x\in X$ there passes a unique line $L_x^i$, concluding the proof.
\end{proof}

\begin{lem}\label{lem:FF2} Let $X\subset\PP^4$ be a non-degenerate irreducible surface such that $\Tan(X)=\PP^4$. Then $X\subset\PP^4$ is contained in the locus of second order foci of the family of tangent spaces.
\end{lem}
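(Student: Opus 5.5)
The plan is to work directly with the local parametrization of \S\ref{parTanX} and show that, for a general $x\in X$, the point $(x,x)\in T_X$, which lies on the vertex of the focal cone, is a point where $q$ has corank two. This will force the fiber of $q$ through $x$ to have length at least three at $(x,x)$, so $(x,x)$ is a second-order focus in the sense introduced before Example \ref{2foci:dev}.

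First I write $q$ locally at $(x,x)$ as the map
$$(\mathbf u,\lambda)\mapsto \mathbf\Phi(\mathbf u)+\sum_{i=1}^2\lambda_i\,\partial_i\mathbf\Phi(\mathbf u)$$
from $\Delta^2\times\mathbb C^2$ to $\mathbb C^4$. The point $(x,x)$ corresponds to $(\mathbf u,\lambda)=(\mathbf 0,\mathbf 0)$. By \eqref{diffG} the image of $dq_{(\mathbf 0,\mathbf 0)}$ is spanned by $\partial_1\mathbf\Phi(\mathbf 0),\partial_2\mathbf\Phi(\mathbf 0)$ together with the vectors $\mathbf w_j(\mathbf 0,\mathbf 0)$. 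These last vectors vanish because $\lambda=\mathbf 0$.

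Explicitly, $dq_{(\mathbf 0,\mathbf 0)}(\mu,\nu)=\sum_i(\mu_i+\nu_i)\,\partial_i\mathbf\Phi(\mathbf 0)$. Hence $dq$ has rank $2$ there, with two-dimensional kernel
$$K=\{(\mu,-\mu)\}\subset t_{(x,x)}T_X.$$
This already shows, consistently with Proposition \ref{focuscar}, that $x$ is a focus on $T_xX$. The point is that it is a focus of corank two, not merely corank one.

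Next I deduce the length estimate. If $x$ is a fundamental point (that is, $q^\inv(x)$ is positive-dimensional at $(x,x)$), then $q$ ramifies to every order along the curves in the fiber and there is nothing to prove. Otherwise $q$ is finite near $(x,x)$, and the germ of $q^\inv(x)$ at $(x,x)$ is a zero-dimensional scheme. Its Zariski tangent space equals $\ker(dq_{(x,x)})=K$, which has dimension $2$. A zero-dimensional local scheme with embedding dimension $2$ has length at least $1+2=3$. Hence $\length_{(x,x)}q^\inv(x)\geq 3$.

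Concretely, along any direction $(\mu,-\mu)\in K$ one has
$$q(t\mu,-t\mu)=\mathbf\Phi(\mathbf 0)-\tfrac{t^2}{2}\sum_{i,j}\mu_i\mu_j\,\partial_i\partial_j\mathbf\Phi(\mathbf 0)+O(t^3).$$
So $q$ ramifies to order at least two along $K$. Moreover, the kernel contains a whole plane of directions, so the fiber's local algebra contains both kernel coordinates besides the constants. Thus $q$ ramifies with order at least three at $(x,x)$, i.e. $(x,x)\in F_x^2$. Since $x$ is general, the closure of these points gives $X\subseteq F^2$.

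The only delicate step is matching this algebraic length count with the paper's working definition of $F^2_v$ ("ramifies with order at least three in some direction"). I would phrase the conclusion as $\length_{(x,x)}q^\inv(q(x,x))\geq 3$, which is exactly the property used in Segre's argument recalled above. If one insists on a single curvilinear direction, I would check it by taking a curve tangent to $K$ whose second-order term is adjusted, via a correction $t^2\nu$ in $\lambda$, to kill the tangential component, and verifying that this arrangement is possible.
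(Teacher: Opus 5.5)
Your corank-two argument is correct. It proves the lemma by a genuinely different and much shorter route than the paper's, provided the conclusion is read as a length statement.

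\textbf{Comparison with the paper.} The paper works along the focal directions. It integrates the focal direction field to a curve $D_x$ through $x$ and observes that the tangent planes along $D_x$ form a developable one-parameter family $Y_{D_x}$. It then uses the classification of developable threefolds ruled by planes ($S(L,\widetilde C_x)$, $S(p,T\widetilde C_x)$, $T^{(2)}\widetilde C_x$) to place $x$ either on a line of fundamental points or on the edge of regression of $Y_{D_x}$.

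You instead stay at the vertex $(x,x)$ of the focal cone. Since $\lambda=\mathbf 0$ kills every $\mathbf w_j$, $dq$ has corank two there. So the fibre has embedding dimension two at $(x,x)$ and contains the length-three scheme cut out by $\mathfrak m_{(x,x)}^2+q^*\mathfrak m_x$. This is exactly Segre's subscheme of $q^{-1}(q(r))$ of length at least three produced by the differential of $q$. It is also all that the proof of Theorem \ref{class:omega_2=2} needs: length $\geq 3>\tau(X)=2$ forces $x$ to be fundamental by flatness. Your version needs no case split between two focal lines and a double line, no analytic flows and no classification of developables, and it works at every $x\in X_{\reg}$.

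\textbf{What to drop.} Do not claim that this means $q$ ramifies to order three \emph{along some direction}. The curvilinear repair you sketch cannot succeed. For $(\mathbf u,\lambda)=(t\mu+t^2\alpha,\,-t\mu+t^2\nu)$ one finds
\[
q(\mathbf u,\lambda)-\mathbf\Phi(\mathbf 0)=t^2\Bigl(\sum_i(\alpha_i+\nu_i)\,\partial_i\mathbf\Phi(\mathbf 0)-\tfrac12\sum_{i,j}\mu_i\mu_j\,\tfrac{\partial^2\mathbf\Phi}{\partial u_i\partial u_j}(\mathbf 0)\Bigr)+O(t^3).
\]
The corrections $\alpha,\nu$ only move the tangential part. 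The class in $N_xX$ of the $t^2$-coefficient is $-\tfrac12\widetilde\varphi_x(\mathbf v,\mathbf v)$, with $\mathbf v=\sum_j\mu_j\partial_j\mathbf\Phi(\mathbf 0)$. It vanishes only if $[\mathbf v]\in\Bs(|\SSx|)$, which happens exactly when the focal conic is a double line.

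For a concrete case, take the surface $x_0x_3=x_1^2$, $x_0x_4=x_2^2$ in $\PP^4$. It is non-degenerate, has $\Tan(X)=\PP^4$, and has two distinct focal lines at every smooth point; locally $\mathbf\Phi=(u_1,u_2,u_1^2,u_2^2)$. The fibre of $q$ at $(x,x)$ is $\CC[u_1,u_2]/(u_1^2,u_2^2)$. It has length $4$, but its only length-three quotient is $\CC[u_1,u_2]/(u_1,u_2)^2$, so no direction of order three exists. Moreover $(x,x)$ is the only point of $T^\circ_X$ over $x$, so the directional reading genuinely fails there.

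So state your conclusion as $\length_{(x,x)}q^{-1}(x)\geq 3$ and leave it there.
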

\begin{proof} Let us fix a general $x\in X$ and let $C_x=L_x^1\cup L_x^2$ or $(C_x)_{\red}=L_x$ be the focal conic in $T_xX$. Let $v_i=\PP(t_xL_x^i)\in \PP(t_xX)$, respectively $v=\PP(t_xL_x)$, be the directions of the corresponding lines. In the first case the points $v_i$'s are conjugate with respect to $\SSx$, while in the second case $v\in \Bs(\SSx)$.

In the first case, we can separate analytically the two directions in a neighbourhood of $x$ and define via the family of tangent lines $t_yL_y^1$ a flow in an analytic  neighbourhood $V$ of $x$. Let $D_x\subset  V$ be the analytic curve passing through $x$ and  having the previous prescribed tangent directions at every point $y\in U$. In the second case we simply take the family of tangent lines $t_yL_y$ to define the flow. The curve $D_x\subset V$ defines the one-dimensional family of tangent planes
$$Y_{D_x}=\bigcup_{p\in D_x}T_pX\subset \Tan(X)=\PP^4.$$
By construction $Y_{D_x}\subset \PP^4$ is developable because at each  $p\in D_x$ the infinitesimal near plane $(T_pX)^\prime$ cuts $T_pX$ along $L^1_p$, respectively $L_p$.

If $Y_{D_x}=S(L, \widetilde C_x)$ with $L$ a line and $\widetilde C_x$ an irreducible curve, then $L=L_x^i$, respectively $L=L_x$, is a locus of fundamental points. 
If $Y_{D_x}=S(p,T\widetilde C_x)$ or if $Y_{D_x}=T^{(2)}\widetilde C_x$, then clearly $\widetilde C_x=D_x$ so that  $D_x$ is a locus of second-order foci but $L^1_x$
(or $L_x$) is not. Anyway, a general $x\in X$  is a second order focus, as claimed.
\end{proof}

We are now ready to prove Theorem \ref{class:omega_2=2}  (see also  \cite[pg. 71]{Segre2} for a similar count of parameters for congruences of planes in $\PP^4$ of order two).
\medskip 
\begin{proof}(of Theorem \ref{class:omega_2=2}). By Lemma \ref{lem:FF2} there exists an irreducible component $\widetilde F^2$ of the locus of second order foci $F^2\subset\PP^4$ of the family of tangent planes to $X$ such that $X\subseteq \widetilde F^2$. If $\widetilde F^2$ is an irreducible component of the focal locus $F$, then $\dim(\widetilde F^2)=2$ because every focus in this irreducible component is a second order focus. If not, we have $\dim(\widetilde F^2)=2$. Anyway we have  $\widetilde F^2=X$. 

Since $\tau(X)= 2$, the locus of second-order foci  coincides with  the locus of fundamental points of the family of tangent spaces. Let $G\subseteq \mathcal F^2$ be an irreducible component dominating $X=\widetilde F^2$ via $q:T_X\to\PP^4$. Since every $x\in X$ is fundamental, we have that $\dim(G)=3$ and that $G$ dominates $X$ via $p:T_X\to X$. 
The general fiber of $q_{|G}:G\rightarrow X$, denoted by $G_x$, has pure dimension one and is contained in the focal locus. Hence, for
$x\in X$ general, there exists at least one irreducible component of the focal conic $C_x$ contained in $G_x$. So  we can apply Lemma \ref{lem:F2}  to deduce that $X\subset\PP^4$ is a scroll over a curve, which is rational by Lemma \ref{tau(X)=2 implies X rational}.

Finally observe that a smooth scroll  $X\subset\PP^4$ is linearly normal  and, being rational, it is necessarily  isomorphic to $S(1,2)$.
\end{proof}

\begin{rem}\label{rem:n3:tau=2} If $n\geq 3$, the previous count of parameters does not work and  we constructed examples of smooth irreducible varieties $X^n\subset\PP^{2n}$ with $\tau(X)=2$ that are not scrolls over a rational curve (see Examples \ref{QELex}, \ref{X333} and \ref{Verra}).
\end{rem}

\section{A lower bound for the degree of the  tangent variety when $\Tan(X)\subsetneq\Sec(X)$}\label{Sec:tandegree}

Suppose that $X^n\subset\PP^N$ is a  non-degenerate variety such that $\Tan(X)\subsetneq \PP^N$. Let us recall that if $X$ is also smooth, then  either $\Tan(X)=\Sec(X)$ or $\dim(\Tan(X))=2n$ and $\dim(\Sec(X))=2n+1$ by Zak's Theorem on tangencies  (see for example \cite{Russo.Book2016}).  We are looking for an effective  lower bound on the degree of $\Tan(X)\subsetneq\PP^N$ and then  try to classify varieties attaining the bound in the same spirit of the  results  in \cite{Ciliberto.Russo.2006} for $\Sec(X)$. If  $\Tan(X)=\Sec(X)\subsetneq\PP^N$, then a lower bound on the degree follows from the results in  {\it loc.~cit.}

\begin{thm}[{\cite[Theorem 4.2]{Ciliberto.Russo.2006}}]\label{Lower bound on deg(SX))} Let $X^n\subset \PP^N$ be a non-degenerate variety such that  $\Sec(X)\subsetneq \PP^N$. Then $$\deg(\Sec(X))\geq \binom{\codim_{\PP^N} (\Sec(X)) +2}{2}.$$
\end{thm}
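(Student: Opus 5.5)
The plan is to argue by induction on the codimension $c=\codim_{\PP^N}(\Sec(X))$, reducing to the case $c=0$ by projecting from general points of $\Sec(X)$.

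\emph{Base cases.} When $c=0$ there is nothing to prove, since $\binom{2}{2}=1$. When $c=1$, $\Sec(X)$ is a hypersurface, and the claim $\deg(\Sec(X))\geq 3$ is equivalent to ruling out a quadric. Suppose $\Sec(X)$ were a quadric hypersurface. By Terracini's Lemma its tangent hyperplane at a general point of a secant line $\langle x_1,x_2\rangle$ contains $T_{x_1}X$ and $T_{x_2}X$. For a quadric, the tangent hyperplane at $p$ is the polar hyperplane of $p$, and it varies with $p$ along the line $\langle x_1,x_2\rangle$ unless that line lies in the vertex. Since the hyperplane would have to contain $X$ in every case, $X$ would be degenerate, which is a contradiction. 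This is the classical argument showing that $\Sec(X)$ is not a quadric unless it is a linear space.

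\emph{Inductive step.} Let $c\geq 1$ and take $p\in\Sec(X)$ general. Consider the projection $\pi_p:\PP^N\dasharrow\PP^{N-1}$ and set $X'=\overline{\pi_p(X)}$, which is non-degenerate. Then $\Sec(X')=\overline{\pi_p(\Sec(X))}$, and since $\Sec(X)$ is a cone with vertex containing $p$ (because $p\in\Ver(\Sec(X))$ fails in general; more precisely, the secant variety is swept by the cones $S(x,X)$), one checks that $\dim\Sec(X')=\dim\Sec(X)-1$. Hence $\codim\Sec(X')=c$ again, and this does not decrease the codimension.

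Because of this, a better route is to project from a general point $q\notin\Sec(X)$. Then $\Sec(X_q)=\pi_q(\Sec(X))$ has the same dimension, and its codimension drops to $c-1$. The degree satisfies $\deg\Sec(X)=\deg(\pi_q|_{\Sec X})\cdot\deg\Sec(X_q)$, with $\deg\Sec(X_q)\geq\binom{c+1}{2}$ by induction. This alone does not give the bound, so one needs an additional $c+1$. The idea is to intersect with a general linear space and estimate $\deg\Sec(X)\geq\deg\Sec(X_q)+(c+1)$.

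\emph{Main obstacle.} Proving this increment is the hardest step. I would take a general point $p\in\Sec(X)$ and its tangent space $T=T_p\Sec(X)$, which contains $T_{x_1}X$ and $T_{x_2}X$ for $p\in\langle x_1,x_2\rangle$. I would then project $\Sec(X)$ from $T_p\Sec(X)$, or from the span of $p$ with $X$, and compare degrees: $\deg\Sec(X)$ equals $\mult_p$ plus the degree of the projection from $p$. Next I would show that the cone $S(x_1,X)\subset\Sec(X)$ through $p$ forces $\deg\Sec(X)-\deg\Sec(\pi_{x_1}X)\geq c+1$. This uses the fact that $\Sec(X)$ is non-degenerate in a space of dimension at least $c+1$ beyond its own dimension, together with the bound $\deg Y\geq\codim Y+1$ for non-degenerate varieties, applied to a suitable linear section. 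I would expect that the recursion
\[
d(c)\geq d(c-1)+c+1,\qquad d(0)=1,
\]
summing to $\binom{c+2}{2}$, is exactly what is needed. Controlling how degree drops under inner projection from a point of $X$ (rather than from a point of $\Sec(X)$) is where the technical work lies.
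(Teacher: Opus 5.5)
There is a genuine gap. The inductive step is never established, and the mechanisms you actually propose for it fail.

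Projecting from a general $q\notin\Sec(X)$ cannot gain anything. For $c\geq 2$ you have $\dim\Sec(X)\leq N-2$, so $\pi_q|_{\Sec(X)}$ is birational onto its image and $\deg\Sec(X_q)=\deg\Sec(X)$. The increment $\deg\Sec(X)\geq\deg\Sec(X_q)+(c+1)$ is then false, not merely hard. In \eqref{degproj} the additive gain comes only from the multiplicity of the centre on $\Sec(X)$. That multiplicity is $0$ for $q\notin\Sec(X)$ and $1$ for a general point of $\Sec(X)$. Two smaller points:
\begin{itemize}
\item Your claim that projecting from a general $p\in\Sec(X)$ keeps the codimension equal to $c$ is wrong. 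If a general point lay in $\Ver(\Sec(X))$, then $\Sec(X)$ would be linear, hence equal to $\PP^N$. So $\dim\Sec(X_p)=\dim\Sec(X)$ and the codimension drops to $c-1$.
\item Your quadric argument for $c=1$ is imprecise: polar hyperplanes are constant along a line of the quadric if and only if the line \emph{meets} the vertex. In any case, that case needs no separate treatment.
\end{itemize}

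The route that works is the inner projection you only allude to at the end. This is the argument of \cite{Ciliberto.Russo.2006}, and the paper copies the same scheme for $\Tan(X)$ in Theorem \ref{Lower bound on deg(TX) when TX is not SX)}. Induct on $c$, starting from the trivial case $c=0$. For $x\in X$ general, $x\notin\Ver(\Sec(X))$, so $\Sec(X_x)=\Sec(X)_x$ has codimension $c-1$ in $\PP^{N-1}$ and $X_x$ is non-degenerate. Then \eqref{degproj} gives $\deg\Sec(X)\geq\binom{c+1}{2}+\mult_x\Sec(X)$.

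Everything rests on the key lemma $\mult_x\Sec(X)\geq c+1$, which your proposal does not supply. Your candidate, the cone $S(x,X)$, cannot give it. It has dimension $n+1$, in general less than $\dim\Sec(X)$, so it is not a component of the tangent cone $TC_x\Sec(X)$, whose degree is $\mult_x\Sec(X)$. Non-degeneracy of a reducible cone also does not force degree at least codimension plus one.

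What you need is a top-dimensional non-degenerate component of $TC_x\Sec(X)$:
\begin{enumerate}
\item Let $x'\to x$ on $X$ and take points of $\langle x',y\rangle$ tending to $x$. This shows $S(T_xX,X)\subseteq TC_x\Sec(X)$.
\item By Terracini's Lemma a general tangent space of $S(T_xX,X)$ is $\langle T_xX,T_yX\rangle$, so $\dim S(T_xX,X)=\dim\Sec(X)$. Hence $S(T_xX,X)$ is an irreducible component of the tangent cone.
\item It contains $X$, so it is non-degenerate of codimension $c$ and has degree $\geq c+1$. Equivalently, it is the cone with vertex $T_xX$ over the tangential projection of $X$, a non-degenerate variety of codimension $c$ in $\PP^{N-n-1}$.
\end{enumerate}
This yields $\mult_x\Sec(X)\geq c+1$, and hence $\deg\Sec(X)\geq\binom{c+1}{2}+c+1=\binom{c+2}{2}$.
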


The following result is elementary but it will be the starting point for our induction argument leading to an optimal  lower bound for $\deg(\Tan(X))$ in terms of its codimension.

\begin{pr}\label{deg(TX)=3 implies TX=SX is a hypersurface} Let $X^n\subset\PP^N$ be a non-degenerate variety with  $\Tan(X)\subsetneq \PP^N$.
Then $deg(\Tan(X))\geq 3$. Moreover, if $\deg(\Tan(X))=3$, then  $\Tan(X)=\Sec(X)$ is a hypersurface in $\PP^N$. 

In particular,   if $\Tan(X)\subsetneq \Sec(X)$, then $\deg(\Tan(X))\geq 4$.
\end{pr}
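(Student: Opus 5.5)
The plan is to use only two facts about $\Tan(X)$ and an elementary degree count. First, $\Tan(X)$ is an irreducible variety containing $X$, so it is non-degenerate in $\PP^N$. Second, since $\Tan(X)\subsetneq\PP^N$, Remark \ref{singTX)} says that $\Tan(X)$ is singular along $X$; as $\Sing(\Tan(X))$ is closed, $\mult_x(\Tan(X))\geq 2$ for every $x\in X$.

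\textbf{Degrees one and two.} If $\deg(\Tan(X))=1$, then $\Tan(X)$ is a linear space containing $X$, hence equal to $\PP^N$ by non-degeneracy, a contradiction. If $\deg(\Tan(X))=2$, then, being irreducible and non-degenerate, $\Tan(X)$ is a quadric hypersurface in $\PP^N$. The singular locus of a quadric is its vertex, which is a linear space. It contains $X$, so by non-degeneracy it is all of $\PP^N$, which is absurd. Hence $\deg(\Tan(X))\geq 3$.

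\textbf{Degree three.} Suppose $\deg(\Tan(X))=3$, and let $c$ be the codimension of $\Tan(X)$ in $\PP^N$. By the classical bound $\deg\geq \codim+1$ for non-degenerate irreducible varieties, $c\leq 2$. If $c=2$, then $\Tan(X)$ is a variety of minimal degree. By the del Pezzo--Bertini classification it is a cone over a smooth variety of minimal degree, so its singular locus is contained in its vertex, a linear space. As before, this linear space would contain $X$ and hence equal $\PP^N$, which is absurd. So $\Tan(X)$ is a cubic hypersurface.

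\textbf{$\Tan(X)=\Sec(X)$.} Let $x_1\neq x_2\in X$ be general. The line $\langle x_1,x_2\rangle$ meets the cubic $\Tan(X)$ with multiplicity at least $2$ at each $x_i$, because $\Tan(X)$ is singular along $X$. That gives total intersection multiplicity at least $4>3$, so the line lies in $\Tan(X)$. Taking closures gives $\Sec(X)\subseteq\Tan(X)$, and together with the general inclusion $\Tan(X)\subseteq\Sec(X)$ we get $\Tan(X)=\Sec(X)$.

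\textbf{Last assertion.} If $\Tan(X)\subsetneq\Sec(X)$, then $\Tan(X)\subsetneq\PP^N$, so $\deg(\Tan(X))\geq 3$. The degree-three case is excluded because it forces $\Tan(X)=\Sec(X)$. Hence $\deg(\Tan(X))\geq 4$.

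The only step needing an external input is the codimension-two case of degree three, where I rely on the classification of varieties of minimal degree. If one prefers to avoid it, one can instead argue via the secant-line count: every secant line of $X$ lies in $\Tan(X)$ whenever $\deg(\Tan(X))\leq 3$. A general linear section then reduces the question to a degree-three curve section, and a counting argument on that curve section should give the same contradiction.
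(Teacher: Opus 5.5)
Your proof is correct and is essentially the paper's argument: $X\subseteq\Sing(\Tan(X))$ rules out quadrics, and in codimension two it rules out cubic scrolls or cones, since their singular loci are linear. In the cubic hypersurface case, a general secant line meets $\Tan(X)$ with multiplicity at least $4$, so B\'ezout forces $\Sec(X)\subseteq\Tan(X)$. The degree-one remark and the alternative route sketched in your last paragraph are not needed.
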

\begin{proof}
    Since $X$ is non-degenerate and since $X\subseteq \Sing(\Tan(X))\subsetneq \Tan(X)\subsetneq \PP^N$,  we have $\deg(\Tan(X))\geq 3$ because the singular locus of a quadric hypersurface is a linear subspace. Suppose $\deg(\Tan(X))= 3$. Since $3=\deg(\Tan(X))\geq \codim_{\PP^N}(\Tan(X))+1$, we have that either $\Tan(X)$ is a cubic hypersurface in $\PP^N$ or $\Tan(X)$ is a variety of minimal degree of codimension $2$ in $\PP^N,$ that is  a  cubic rational normal scroll. Since the singular locus of a rational normal scroll is a linear space, the second case cannot  occur. In the former case, we claim that $\Sec(X)\subseteq \Tan(X)$, yielding $\Tan(X)=\Sec(X)$. Indeed. If $L$ is a general secant line to $X$ then $L\cap X=\{p_1,p_2\}$ with $p_1, p_2\in X$. Since $p_i\in\Sing(\Tan(X))$, we have $\mult_{p_i}(L\cap \Tan(X))\geq 2$ so  Bézout's Theorem yields $L\subset \Tan(X)$, proving the claim.
\end{proof}

From now on we shall concentrate exclusively on the case $\Tan(X)\subsetneq \Sec(X)\subseteq\PP^N$.

\begin{rem}\label{Lemma on projections of tangent varieties}
Given a variety $Y\subset \PP^N$ and a point $p\in \PP^N$, we denote by $Y_p \subseteq\PP^{N-1}$ the projection of $Y$ from $p$. More generally, if $L\subset \PP^N$ is a linear subspace such that $X\not\subseteq L$, we denote the projection of $X$ from $L$ by $X_L$.  The following are well-known general facts. 

If $p\not\in\Ver(Y)$, then $\dim(Y_p)=\dim(Y)$. If 
$p\in \PP^N\setminus\Ver(Y)$ and if  $$\pi_{p|Y}:Y\map Y_p\subset\PP^{N-1}$$  is the projection from $p$ restricted to $Y$, then
\begin{equation}\label{degproj}
\deg(Y)=\deg(\pi_{p|Y})\cdot \deg(Y_p)+\mult_pY.
\end{equation} 
For an arbitrary $p\in \PP^N$ we have $\Tan(Y)_p=\Tan(Y_p)$ and $\Sec(Y)_p=\Sec(Y_p)$.

In particular, if $X\subset\PP^N$ is a variety such that $\Tan(X)\subsetneq\Sec(X)\subsetneq \PP^N$ and if $p\not\in\Ver(\Tan(X))\cup\Ver(\Sec(X))$, then $\Tan(X_p)\subsetneq \Sec(X_p)$. If $\Tan(X)\subsetneq\Sec(X)=\PP^N$, if $\codim_{\PP^N}(\Tan(X))\geq 2$ and if $p\not\in\Ver(\Tan(X))$, then $\Tan(X_p)\subsetneq \PP^{N-1}=\Sec(X_p)$. So if $\codim_{\PP^N}(\Tan(X))\geq 2$ and if $\Tan(X)\subsetneq \Sec(X)$, then $\Tan(X_p)\subsetneq \Sec(X_p)$ for $p\in X$ general. 

Finally, let us remark that if $X^n\subset\PP^{2n+1}$ is a non-degenerate variety such that $\Tan(X)\subset\PP^{2n+1}$ is a hypersurface of degree four (the minimal possible degree  if $\Tan(X)\subsetneq \Sec(X)$), then 
$\tau(X_p)=(4-\mult_p\Tan(X))\cdot \tau(X)$ as we already remarked in the proof of Lemma \ref{omegadegTX}. Under the previous  hypothesis  $\tau(X)=1$ implies $\mult_p\Tan(X)=2$ for $p\in X$ general, see {\it loc. cit.}
\end{rem}

We are finally ready to provide an effective lower bound on $\deg(\Tan(X))$ when $\Tan(X)\subsetneq \Sec(X)$ and to generalise some of the previous remarks. 

\begin{thm}\label{Lower bound on deg(TX) when TX is not SX)}
 Let $X^n\subset\PP^N$ be a non-degenerate variety such that $\Tan(X)\subsetneq \Sec(X)$ and let $p\in X$ be a general point. 
 Then $$\deg(\Tan(X))\geq 2(N-\dim(\Tan(X))+1).$$ 
 
 Moreover, if equality holds and if $N-\dim(\Tan(X))\geq 2$,  then $\mult_p\Tan(X)=2$, $\deg(\pi_{p|\Tan(X)})=1$
 and $\tau(X)=\tau(X_p)$.
 \end{thm}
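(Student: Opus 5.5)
We argue by induction on the codimension $c:=N-\dim(\Tan(X))\geq 1$, using projection from a general point $p\in X$ as the inductive step, in the spirit of \cite{Ciliberto.Russo.2006}.

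\emph{Base case $c=1$.} Here $\Tan(X)$ is a hypersurface with $\Tan(X)\subsetneq\Sec(X)$. Proposition \ref{deg(TX)=3 implies TX=SX is a hypersurface} then gives $\deg(\Tan(X))\geq 4=2(c+1)$.

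\emph{Inductive step, $c\geq 2$.} Let $p\in X$ be general. By Remark \ref{Lemma on projections of tangent varieties} we have $\Tan(X)_p=\Tan(X_p)$ and $\Tan(X_p)\subsetneq\Sec(X_p)$. The projection $X_p\subset\PP^{N-1}$ is non-degenerate. Moreover $p\notin\Ver(\Tan(X))$: otherwise $\Tan(X)$ would be a cone with vertex containing a general point of $X$, hence containing $X$, and then $\Tan(X)\supseteq\Sec(X)$. So $\dim(\Tan(X_p))=\dim(\Tan(X))$, and the codimension of $\Tan(X_p)$ in $\PP^{N-1}$ is $c-1$. By induction,
$$\deg(\Tan(X_p))\geq 2c.$$
Formula \eqref{degproj} then gives
$$\deg(\Tan(X))=\deg(\pi_{p|\Tan(X)})\cdot\deg(\Tan(X_p))+\mult_p\Tan(X)\geq 2c+\mult_p\Tan(X).$$
Since $X\subseteq\Sing(\Tan(X))$ (Remark \ref{singTX)}, as $\Tan(X)\subsetneq\PP^N$), we have $\mult_p\Tan(X)\geq 2$. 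Hence $\deg(\Tan(X))\geq 2c+2=2(c+1)$, completing the induction.

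\emph{Equality case.} If $c\geq 2$ and $\deg(\Tan(X))=2(c+1)$, every inequality in the chain above must be an equality. This forces $\mult_p\Tan(X)=2$, $\deg(\pi_{p|\Tan(X)})=1$, and $\deg(\Tan(X_p))=2c$.

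It remains to show $\tau(X)=\tau(X_p)$. For $y\in X$ general we have $\pi_p(T_yX)=T_{\pi_p(y)}X_p$, and $\pi_p$ restricted to $T_yX$ is an isomorphism because $p\notin T_yX$ for general $y$. Take a general point $z\in\Tan(X_p)$. Since $\pi_{p|\Tan(X)}$ is birational, its fiber over $z$ is a single general point $w\in\Tan(X)$. The tangent spaces $T_{\pi_p(y)}X_p$ through $z$ then correspond bijectively to the tangent spaces $T_yX$ through $w$.

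The main obstacle is making this correspondence rigorous. One needs that every tangent space to $X_p$ at a general point is the projection of a tangent space of $X$, which holds because $\pi_{p|X}$ is birational for general $p\in X$ when $c\geq 2$. One also needs that distinct $y$ give distinct tangent spaces of $X_p$. Equivalently, the point is that the tautological maps $\widetilde q$ for $X$ and $X_p$ have degrees related by the degree of $\pi_{p|\Tan(X)}$, namely
$$\tau(X_p)=\deg(\pi_{p|\Tan(X)})\cdot\tau(X)=\tau(X),$$
just as in the proof of Lemma \ref{omegadegTX}.
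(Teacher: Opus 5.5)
Your proof is correct and follows the paper's argument. The base case $c=1$ comes from the earlier proposition giving $\deg(\Tan(X))\geq 4$ when $\Tan(X)\subsetneq\Sec(X)$; the induction on $c$ uses projection from a general $p\in X$ together with \eqref{degproj} and $\mult_p\Tan(X)\geq 2$; and equality forces $\mult_p\Tan(X)=2$ and $\deg(\pi_{p|\Tan(X)})=1$, hence $\tau(X_p)=\tau(X)$. Your extra justifications are sound and fill in details the paper leaves implicit: that $p\notin\Ver(\Tan(X))$, and the birationality of $\pi_{p|X}$ yielding $\tau(X_p)=\deg(\pi_{p|\Tan(X)})\cdot\tau(X)$.
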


\begin{proof}
If $c=N-\dim(\Tan(X))=1$, then $2(N-\dim(\Tan(X))+1)=4$. Proposition \ref{deg(TX)=3 implies TX=SX is a hypersurface}
 assures $\deg(\Tan(X))\geq 4$ under our hypothesis, proving the assertion in this case. From now on we  suppose $c\geq 2$.
By Remark  \ref{Lemma on projections of tangent varieties} we can proceed by induction on $c\geq 2$ and apply  the  induction hypothesis 
to $X_p\subset\PP^{N-1}$ with $p\in X$ general, yielding
 $$\deg(\Tan(X))=\deg(\pi_{p|\Tan(X)})\cdot \deg(\Tan(X_p))+\mult_p\Tan(X)\geq 2(c-1+1)+2=2(c+1)$$
 and concluding the proof of the first part.
 
 If $c\geq 2$ and if $\deg(\Tan(X))=2(c+1)$, then $\mult_p\Tan(X)=2$ and $\deg(\pi_{p|\Tan(X)})=1$ as claimed.
 Since $\deg(\pi_{p|\Tan(X)})=1,$ we have  $\tau(X)=\tau(X_p)$.
\end{proof}

\begin{rem}\label{lowTan} The result is sharp. Indeed, the quadratic Veronese surface $\nu_2(\PP^2)\subset \PP^5$ and the Segre fourfold $\PP^2\times\PP^2\subset \PP^8$  are examples of smooth  varieties for which $\Tan(X)=\Sec(X)\subset\PP^N$ is a cubic hypersurfaces.
\end{rem}

We now show that there exist examples of (smooth) irreducible varieties $X^n\subset\PP^N$  such that $\Tan(X)\subsetneq \Sec(X)$ and with $\deg(\Tan(X))=2(N-\dim(\Tan(X))+1)$. If the varieties are smooth, then necessarily $\dim(\Tan(X))=2n$ and $N\geq 2n+1$. 

\begin{exam}\label{ex:minimalTan} Let $$S(a_1,\ldots, a_n)\subset\PP^{N},$$ $N\geq 2n+1$, be a smooth rational normal scroll. Recall that $a_i>0$ for every $i$,   $\sum_{i=1}^na_i=N-n+1$
and  $\deg(S(a_1,\ldots, a_n))=N-n+1$. 
Moreover, we have $$\dim(\Sec(S(a_1,\ldots,a_n)))=2n+1$$ and 
\begin{equation}\label{degSa1an}
\deg(\Sec(S(a_1,\ldots, a_n)))= \binom{\codim_{\PP^N} (\Sec(S(a_1,\ldots, a_n))) +2}{2}=\binom{N-2n+1}{2}
\end{equation}
(see for example \cite[\S 5]{Ciliberto.Russo.2006}).

Then $\dim(\Tan(S(a_1,\ldots,a_n)))=2n$ and we claim that 
$$\deg(\Tan(S(a_1,\ldots, a_n)))=2(N-2n+1).$$

We shall use the following elementary remark. A linear system of hypersurfaces of degree $d$ on $\PP^N$ of dimension $M$ such that the Koszul syzygies of a base  are generated by linear ones defines a rational map $\phi:\PP^N\map\PP^s$ with linear fibers, i.e. the closure of each  fiber of $\phi$ is a linear subspace of $\PP^N$ (see \cite{Vermeire}). 

This implies that  a variety $X^n\subset\PP^N$ defined by quadratic equations whose first syzygies are generated by  linear ones is a $QEL$-variety (the associated map $\phi$ contracts the secant and tangent lines passing through a point of $\Sec(X)$). In particular, such a variety with $\dim(\Sec(X))=2n+1$ has the property that through a general point of $\Sec(X)$, respectively of $\Tan(X)$, there passes a unique secant line to $X$ (i.e. $\mu(X)=1$), respectively a unique tangent line to $X$ (i.e. $\tau(X)=1$). Therefore $\sigma(X)=\deg(\Sec(X))$ and $\omega_n=\deg(\Tan(X))$ for this class of varieties.

 We apply Severi's Double Point Formula \eqref{Sev:formula} and \eqref{degSa1an} to
deduce:
\begin{equation}\label{firstsum}
(N-2n+1)\cdot (N-2n)=2\cdot \deg(\Sec(S(a_1,\ldots, a_n))=(N-n+1)(N-n)-\sum_{i=1}^n\omega_i.
\end{equation}
If $n=1$ and if $S(a_1)\subset\PP^{a_1}$ is a rational normal curve of degree $a_1$, then 
$$\deg(\Tan(S(a_1)))=\omega_1(S(a_1))=a_1\cdot (a_1-1)-2\cdot\deg(S(a_1))=2\cdot a_1-2=2(a_1-2+1)$$
by the previous formula (or directly using \eqref{PXext}: $\deg(\Tan(S(a_1)))=\deg(c_1(\mathcal P_{S(a_1)}))=2d-2$).  Since a general linear section of a smooth rational normal scroll is a rational normal scroll of the same degree 
and since rational normal scrolls have homogeneous ideal generated by quadratic polynomials with a linear resolution,
we can proceed by induction and, letting $X^{n-1}$ be a linear section of $X= S(a_1,\ldots, a_n)$, show that for every $i=1,\ldots, n-1$:
$$\omega_i(S(a_1,\ldots, a_n))=\omega_{n-i}(X^{n-i})=2(N-i-2(n-i)+1)=2(N-2n+1)+2\cdot i,$$

and so
$$\sum_{i=1}^{n-1}\omega_i=2\cdot (n-1)\cdot (N-2n+1)+n(n-1).$$
Since $$(N-n+1)(N-n)-(N-2n+1)(N-2n)=2n(N-2n)+n(n+1)=2n(N-2n+1)+n(n-1),$$
we get
$$\deg(\Tan(S(a_1,\ldots, a_n)))=2n(N-2n+1)+n(n-1)-2(n-1)(N-2n+1)-n(n-1)$$
$$=2(N-2n+1),$$
as claimed. 
\medskip

On the contrary, since $\tau(S(a_1,\ldots, a_n))=1=\mu(S(a_1,\ldots, a_n))$ for any smooth rational normal scroll as above, one can
compute $\omega_i(S(a_1,\ldots, a_n))$ via Chern/Segre classes of $\mathcal P_{S(a_1,\ldots, a_n)}$, which do not depend
on the $a_i$'s but only on $a_1+\ldots+a_n=N-n+1$. Let $S_{n-i}=S(b_1,\ldots, b_{n-i})\subset \PP^{N-i}$, $i=0,\ldots, n-1$ with  $b_j>0$ and 
with $b_1+\ldots+b_{n-i}=N-n+1$.
Then, for every $i=0,\ldots, n-1$,
we have 
$$\omega_{n-i}(S_{n-i})=\deg(\mathcal O_{\mathcal P_{S_{n-i}}}(1)^{2(n-i)})=2(N-n+1-n+i).$$
Using Severi's Double Point Formula \eqref{Sev:formula} we have
$$2\deg(\Sec(S(a_1,\ldots, a_n)))=(N-n+1)(N-n)-\sum_{i=1}^n\omega_i(S(a_1,\ldots, a_n)),$$
getting the formula in \eqref{degSa1an} for $a_i>0$ and $a_1+\ldots+a_n=N-n$ by going backwards
in the previous computations.

Suppose now $m\geq 1$ and $a_i>0$ as above and consider

$$X_{a_1,\ldots, a_n,m}:=S(a_1,\ldots, a_n,\underbrace{0,\ldots,0}_{m})\subset\PP^{N+m}.$$
This is a singular rational normal scroll of dimension $n+m$ with $$\Ver(X^{n+m})=\Sing(X^{n+m})=L=\PP^{m-1}\subset \PP^{n+m}$$ and having degree
$\deg(X_{a_1,\ldots,a_n,m})=N-n+1=N+m-(n+m)+1$. Then: $$X_{a_1,\ldots, a_n,m}=S(L,S(a_1,\ldots, a_n));$$ $$\Sec(X_{a_1,\ldots, a_n,m})=S(L,\Sec(S(a_1,\ldots, a_n)));$$
$$\Tan(X_{a_1,\ldots, a_n,m})=S(L,\Tan(S(a_1,\ldots, a_n))).$$ In particular, we get $\dim(S(X_{a_1,\ldots, a_n,m}))=2n+m+1$, $\dim(\Tan(X_{a_1,\ldots, a_n,m})=2n+m$ and
$$\deg(\Tan(X_{a_1,\ldots, a_n,m}))=2(N-2n+1)=2(N+m-\dim(\Tan(X_{a_1,\ldots, a_n,m}))+1).$$

In conclusion, all the rational normal scrolls as above  are varieties of {\it minimal tangential degree}  with $\Tan(X)\subsetneq\Sec(X)$, showing that the lower bound is sharp.
\end{exam}

\begin{cor}\label{curvesmintangdeg} Let $X\subset\PP^{N}$, $N\geq 3$, be an irreducible non-degenerate curve
such that $\deg(\Tan(X))=2(N-\dim(\Tan(X))+1)$. Then $X\subset\PP^N$ is a rational normal curve of degree $N$.
\end{cor}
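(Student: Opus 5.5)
For a non-degenerate curve $X\subset\PP^N$ with $N\geq 3$, the tangent variety is a surface. Indeed, $X$ is not a line, and $\Sec(X)$ has dimension $3$, so $\Tan(X)\subsetneq\Sec(X)$ holds automatically. The hypothesis therefore reads $\deg(\Tan(X))=2(N-1)$. I will argue by induction on $N$, using the equality case of Theorem \ref{Lower bound on deg(TX) when TX is not SX)} together with repeated projection from a general point of $X$.

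\textbf{Base case $N=3$.} Here $\Tan(X)\subset\PP^3$ is a quartic surface, and I must show that $X$ is a twisted cubic. I would use the classical count: a general plane $H$ meets $\Tan(X)$ in a plane curve of degree $\deg\Tan(X)$. Alternatively, the degree of $\Tan(X)$ equals the number of tangent lines meeting a general line, i.e. the degree of the ramification of the projection $\pi_\ell\colon X\to\PP^1$ from a general line $\ell$. Let $\widetilde X$ be the normalization, of genus $g$, and put $d=\deg X$. Riemann--Hurwitz gives
\[
2g-2=-2d+R,
\]
where the ramification $R$ is at least $\deg(\Tan(X))$. More precisely $R$ equals $\deg\Tan(X)$ plus contributions from cusps. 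Hence $4\geq R=2d+2g-2\geq 2d-2$, so $d\leq 3$, and non-degeneracy gives $d=3$ and $g=0$, i.e. the twisted cubic. The same inequality $\deg\Tan(X)\geq 2d+2g-2-(\text{cusp corrections})$ needs the correct sign: ramification at singular points of $X$ does not produce tangent lines to $X_{\reg}$, so the correct inequality is $\deg\Tan(X)\leq R$, which goes the wrong way. I therefore prefer a uniform argument. The ramification of $\pi_\ell$ at smooth unibranch points of $\widetilde X$ with non-zero differential image is counted with multiplicity by $\deg\Tan(X)$, and the remaining contributions come from cusps. Instead, I would observe that $\deg\Tan(X)\geq 2(N-1)$ is already known, and compare with the degree.

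\textbf{Induction step.} The cleaner route is to induct down to $\PP^2$ rather than to $\PP^3$. For $N\geq 3$, the equality case of Theorem \ref{Lower bound on deg(TX) when TX is not SX)} (here $c=N-2\geq 1$; when $c=1$ one uses $\mult_p\Tan(X)\geq2$ directly) gives, for $p\in X$ general,
\[
\deg(\Tan(X))=\deg(\pi_{p|\Tan(X)})\deg(\Tan(X_p))+\mult_p\Tan(X),
\]
with $\mult_p\geq 2$ and $\deg(\Tan(X_p))\geq 2(N-2)$, so all of these are equalities. Iterating $N-2$ times, I reach a plane curve $X'\subset\PP^2$ with $\Tan(X')=\PP^2$. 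At each step $\deg(\pi_p)=1$ on $\Tan$, and by the formula \eqref{degproj} applied to $X$ itself, $\deg X=\deg(X_p)+1$, since $p\in X$ is a general smooth point and the projection $X\to X_p$ is birational for $N\geq 3$ (a general point is not a point through which all secants are multisecant). After $N-2$ steps, $\deg X=\deg X'+N-2$.

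\textbf{Main obstacle.} The key step is showing $\deg X'=2$. At the last step $N=3$ one has $\deg\Tan(X)=4=\tau(X)\cdot1\cdot\deg(\pi_p)+\mult_p$. Since the tangent lines of $X$ project to tangent lines of $X'$, one gets $\tau(X')=\deg(\pi_{p|\Tan X})\,\tau(X)=4-\mult_p\Tan(X)\leq 2$. By the plane-curve facts recalled in \S\ref{N=2n}, $\tau(X')\geq2$ with equality only for conics. Hence $X'$ is a conic, and therefore $\deg X=N$. A non-degenerate curve of degree $N$ in $\PP^N$ is a rational normal curve. The delicate points are justifying $\tau(X')=(4-\mult_p)\tau(X)$ (Remark \ref{Lemma on projections of tangent varieties}) and the birationality of $X\to X_p$, which ensures the degree drops by exactly one at each step.
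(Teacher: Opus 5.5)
Your route is the paper's: project from $N-2$ general points of $X$, use the equality case of Theorem \ref{Lower bound on deg(TX) when TX is not SX)} so that $\Tan$ maps birationally and $\deg X$ drops by one at each step, and conclude because a plane curve with $\tau=2$ is a conic. The last step, however, has a genuine gap. You never prove $\tau(X)=1$, and the argument needs it. Your identity $4=\tau(X)\cdot 1\cdot\deg(\pi_p)+\mult_p$ is not \eqref{degproj}, which has no $\tau$ factor. For the curve in $\PP^3$ the correct relation is $4=\deg(\pi_{p|\Tan(X)})\cdot 1+\mult_p\Tan(X)$, hence $\tau(X')=(4-\mult_p\Tan(X))\,\tau(X)$. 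From $\mult_p\Tan(X)\geq 2$ you only get $\tau(X')\leq 2\tau(X)$. If, say, $\tau(X)=2$ and $\mult_p\Tan(X)=2$, then $\tau(X')=4$ and you learn nothing about $X'$. The equality case of the theorem only propagates $\tau$ ($\tau(X)=\tau(X_p)$); it never computes it. Also, at $N=3$ your bound $\deg\Tan(X_p)\geq 2(N-2)$ is false, since $\Tan(X_p)=\PP^2$. So only the first $N-3$ projections are governed by the theorem, and the last one must be treated separately, as you eventually do. The abandoned Riemann--Hurwitz paragraph should simply be dropped.

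The missing ingredient is the heart of the paper's proof: $\tau=1$ for every non-degenerate curve $X\subset\PP^M$ with $M\geq 3$. Apply this to the intermediate curve in $\PP^3$, which is non-degenerate. By Proposition \ref{TX is developable}, the tangent plane to the surface $\Tan(X)$ is constant along a general tangent line. So the general tangent lines are the closures of the general fibres of the Gauss map of $\Tan(X)$. If $\tau(X)\geq 2$, Proposition \ref{Contact locus of TX in X is 1-dimensional} would give a Gauss fibre of dimension $\varrho\geq 2$ in the surface $\Tan(X)$. Then $\Tan(X)$ would be a plane, contradicting non-degeneracy. (Classically: two general tangent lines of a non-degenerate space curve are skew.) With $\tau(X)=1$ your computation gives $\tau(X')=4-\mult_p\Tan(X)\leq 2$. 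Hence $\tau(X')=2$, $X'$ is a conic, and birationality of each projection (trisecant lemma) gives $\deg X=N$.
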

\begin{proof} Since $N\geq 3$ and since $X$ is non-degenerate, then $\dim(\Tan(X))=2$ and  $\dim(\Sec(X))=3$. Moreover, since $\Tan(X)$ is ruled
by the tangent lines (they are  general fibers of the Gauss map of the surface $\Tan(X)$) we deduce $\tau(X)=1$. Then the projection of $X$
from $N-2$ general points in it is an irreducible curve $\widetilde X\subset\PP^2$ with $\tau(\widetilde X)=2$,
i.e. it is  a smooth conic. Then $X\subset\PP^N$ is an irreducible rational curve of degree $N$ and
hence a smooth rational curve of degree $N$.
\end{proof}

In the next section we shall construct examples of smooth irreducible varieties $X^n\subset\PP^N$ with $n\geq 2$
of minimal tangent degree having  $\dim(\Sec(X))=2n+1$ and $\tau(X)$ arbitrary large, which are not rational
normal scrolls.

The relation between $X$ and $\Tan(X)$ is not so strict in the sense that there might exist varieties $Y\supset X$ such
that $\Tan(Y)=\Tan(X)$. For example taking other subvarieties of $Y$, we may produce infinitely many varieties
with the same tangent variety. If this occurs, then $\Sing(\Tan(X))\supseteq Y\supsetneq X$. Let us generalize  
 the constructions that already appeared in Examples \ref{Verra}.

\begin{exam}\label{cones:omega>1} Let  $n\geq 2$, $N\geq 2n+1$ and  $Y^{n+1}\subset\PP^N$ be a non-degenerate
variety such that $\widetilde Y=\overline{\mathcal G_Y(Y)}\subset \GG(n+1,N)$ has dimension $n-1$, i.e. $Y$ is a developable scroll
in planes. For $\widetilde y\in\widetilde Y$ general, let $\PP^2_{\widetilde y}:=\overline{\mathcal G_Y^\inv(\widetilde y)}$ be a general fiber.
Let $X^n\subset Y\subset\PP^N$ be a non-degenerate variety of dimension $n$, not contained in $\Sing(Y)$, not developable 
and such that it intersects the general plane $\PP^2_{\widetilde y}$ 
along a curve $C_{\widetilde y}\subset\PP^2_{\widetilde y}$ of degree $b\geq 1$. Observe that the last condition is equivalent to  $\overline{\mathcal G_Y(X)}=\widetilde Y$.

Let $T_qX$ be a general tangent space to $X$.  If $\widetilde q=\mathcal G_Y(q)$, then  $q\in C_{\widetilde q}$ and
$$\PP^{n+1}=T_qY\supset T_qX.$$
Since $T_{y_1}Y=T_{y_2}Y$ for $y_i\in \PP^2_{\widetilde q}$,  we deduce that
\begin{equation}\label{inclTx}
\overline{\bigcup_{q\in U\subseteq C_{\widetilde q}}T_qX}\subseteq T_yY=\PP^{n+1}
\end{equation}
with $y\in \PP^2_{\widetilde q}$ general and fixed and $U\subseteq C_{\widetilde q}$ an open subset.

Since $X$ is not developable and since $\widetilde y=\mathcal G_Y(y)=\widetilde q$ is general, the tangent spaces
to $X$ along $C_{\widetilde y}$ are not constant. Hence equality holds in \eqref{inclTx} and 

$$C_{\widetilde y}^{\#}:=\overline{\bigcup_{u\in U\subseteq C_{\widetilde y}}[T_uX]}\subset(T_yY)^*$$ is a curve of degree $\tau\geq1$ such that
 through a general point of $T_yY$ there pass $\tau\geq 1$ tangent spaces to $X$ at smooth points of $X$ in $C_{\widetilde y}$.   
 We have thus proved  that $\Tan(X)=\Tan(Y)$ and that $\tau(X)\geq \tau\geq 1$. Moreover, $\tau(X)=1$ if and only if $C_{\widetilde y}\subset\PP^2_{\widetilde y}$ is a line. In particular, $$2n=\dim(\Tan(X))=\dim(\Tan(Y))$$ and  $W:=\Tan(Y)$  has
 a developable ruling by linear spaces of dimension $m\geq 3$ birationally parametrized by $\widetilde W^{2n-m}\subset \mathbb G(m,N)$. Indeed, a general  $\PP^m_{\widetilde w}$ of the ruling of $\Tan(Y)$ cuts $Y$ along 
 the plane $\PP^2_{\widetilde y}$  and $X$ along the plane curve $C_{\widetilde y}$.  Hence, for $s\in T_yY$ general, the tangent space  $T_s\Tan(Y)$
 is constant along $\langle s, \PP^2_{\widetilde y}\rangle$ because $T_s\Tan(Y)\supseteq  T_yY$ and $T_yY$ is constant along $\PP^2_{\widetilde y}$.
 If $n=2$, then necessarily $m=3$ and $\Tan(X)\subset\PP^N$ is a developable scroll in $\PP^3$'s over $\widetilde Y$.

Let us focus on the following special case. Let  $Z^{n-1}\subset {\PP}^{N}$ be a degenerate  variety 
spanning a linear space $V$ of dimension $N-2$ such that $\dim(\Tan(Z))=2n-2$ and let
 $L\subset \PP^ {N}$ be a line such that $V\cap L=\emptyset$.
Then the cone $Y^{n+1}:=S(L,Z)\subset\PP^N$ is a developable scroll in planes with $\widetilde Y$ birational to $Z$.
In other words, the projection from $L$ onto $Z$ can be identified with $\mathcal G_Y$ and $Z$ is birational to $\widetilde Y$.
In this case  $\Tan(Y)=S(L,\Tan(Z))$ also holds and $m=2n+1-\dim(\widetilde{\Tan(Z)})\geq 3$.

We claim that $\tau(Z)=1$ implies
\begin{equation}\label{tau(X)tau}
\tau(X)=\tau.
\end{equation}

Indeed, if $q\in \Tan(X)=\Tan(Y)=S(L,\Tan(Z))$ is a general point, $q':=\pi_L(q)$ and the tangent space to $X$ at $x\in X_{\reg}$ passes  through
$q$, then $\pi_L(x)$ is a smooth point of $Z$ and $\pi_L(T_xX)=T_{\pi_L(x)}Z$  passes through $q'\in \Tan(Z)$. Since $\tau(Z)=1$ and since $q'$ is general,
we have $\pi_L(x)=z$ for every such $x$. Hence  $x\in C_z$ and  $T_xX\subset T_xY$ for $x\in C_z$ general, yielding  $\tau(X)=\tau$.

 For $n=2$ the variety $Y^3$ is either  $S(L,Z)\subset\PP^N$
  or $S(p,\Tan(Z))\subset\PP^N$ 
 or $T^{(2)}Z\subset\PP^{N}$ where  $Z$ is an irreducible curve in $\PP^{N-2}$, $\PP^{N-1}$ or $\PP^N$, respectively (see Example \ref{dev:fam}). 
 As shown above we have $m=3$ and the Gauss image
 of $\Tan(X)=\Tan(Y)$ is birational to the Gauss image of $Z$.
 
 To construct a non-degenerate irreducible surface $X^2\subset Y^3\subset\PP^N$ with $N\geq 5$ and $Y$ as above and with $X$ not developable one can remark
 that  developable non-degenerate irreducible surfaces $X^2\subset\PP^M$ with $M\geq 4$ satisfy  $\dim(\Tan(X))=3$.  So if one fixes such a $Y^3\subset\PP^N$
 and takes a general hypersurface $Q\subset\PP^N$ of degree $d\geq 2$, then $X\subset Y\subset \PP^N$ will be an irreducible surface with a one--dimensional
 family of planes curves of degree $d\geq 2$ along which the tangent plane of $X$ at smooth points is not constant. Then, reasoning as above, $\Tan(X)=\Tan(Y)$
 has dimension four and $X$ is not developable. Such $X$'s are necessarily singular.
  \end{exam}

\section{Varieties $X^n\subset \PP^N$ with $N\geq 2n+1$ and $\tau(X)>1$}\label{sec:Roth}

In this section we consider the existence of  smooth varieties $X^n\subset\PP^N$, $N\geq 2n+1$, with  $\tau(X)\geq 2$
and  the classification of non-degenerate irreducible surfaces $X^2\subset\PP^N$, $N\geq 5$, with  $\tau(X)\geq 2$.

If $\tau(X)\geq 1$, then $\dim(\Tan(X))=2n$. If $X$ is smooth, then either $\Sec(X)=\Tan(X)$ or $\Tan(X)\subsetneq \Sec(X)\subsetneq\PP^N$.
In the first case we know  examples with $\tau(X)=2$ such as $QEL$-manifolds of type $\delta=1$. There are smooth examples for
sporadic values of $n$, coming from particular secant-defective homogenous varieties (see the last part of Example \ref{QELex} for the
description of some cases)  and some series of linearly normal varieties
for arbitrary $n\geq 2$, like projections of $\nu_2(\PP^n)\subset \PP^{\frac{n^2+3n}{2}}$
from the linear span of $\nu_2(\PP^s)$ with $\PP^s\subset\PP^n$ a linear subspace of dimension $s$ with $-1\leq s\leq n-2$.

The case $\Tan(X)\subsetneq \Sec(X)$ is also interesting and deserves a particular attention for several reasons which will become clear in the sequel.
To construct smooth varieties $X^n\subset \PP^N$ with $\dim(\Tan(X))=2n$, $\dim(\Sec(X))=2n+1$ and $\tau(X)\geq 2$, we look for smooth
varieties  containing a $(n-1)$-dimensional
family of plane curves of degree at least two such that the union of the planes is a $Y^{n+1}\subset\PP^N$ containing $X$ and developable along
those planes. Indeed, one proceeds in the opposite direction and analyzes  what are the restrictions imposed by the smoothness of $X$ to be contained
in a developable scroll in planes. At least for $n=2$ it is not difficult to
realize that the only possible candidates are the cones  $Y^{n+1}=S(L,Z)\subset\PP^N$ with $Z^{n-1}\subset\PP^{N-2}$ a non-degenerate variety (see Lemma \ref{nonsing} below).
Such varieties  effectively exist
as firstly remarked by Roth in \cite[\S 3.5]{Roth} for $n=2$ and then generalized by Ilic in \cite{Ilic.1998} to arbitrary $n\geq 2$. 

We start with the following key remark (see also large parts of \cite{Ilic.1998}), pointing out important connections with the topics previously discussed.

\begin{lem}\label{lem:LinX} Let $X^n\subset Y^{n+1}=S(L,Z)\subset\PP^N$ be a smooth non-degenerate variety with $Z^{n-1}\subset\PP^{N-2}=\langle Z\rangle\subset\PP^N$ a variety which is not a cone  and with $L\subset \PP^N$ a line such that $L\cap \langle Z\rangle=\emptyset$.
Then $L\subset X$ and $Z\subset\PP^{N-2}$ is a rational scroll, which is the external projection of a smooth rational normal scroll in $\PP^M$ with $M\geq N-2$.

If $X\subset\PP^N$ is also linearly normal, then $Z=S(a_1,\ldots, a_{n-1})\subset\PP^{N-2}$ is a smooth rational normal scroll, $\Tan(X)=S(L,\Tan(S(a_1,\ldots, a_{n-1})))$ and $\deg(\Tan(X))=2(N-2n+1)$.
\end{lem}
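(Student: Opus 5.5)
The plan has three steps. First show $L\subset X$ using smoothness. Then identify $Z$ via the fibration of $X$ over $Z$ given by projection from $L$. Finally, in the linearly normal case, compute $\Tan(X)$ and its degree with Example \ref{ex:minimalTan} and Example \ref{cones:omega>1}.

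\emph{Step 1: $L\subset X$.} For a general $z\in Z$, the plane $\Pi_z=\langle L,z\rangle$ meets $X$ in a plane curve $C_z$ of degree $b\geq 1$. This holds because $X$ is a divisor in $Y$ that is not contained in $\Sing(Y)\supseteq L$. If $L\not\subset X$, then $L\cap X$ would be finite. Two general planes $\Pi_{z_1},\Pi_{z_2}$ meet exactly along $L$, so $C_{z_1}\cap C_{z_2}\subset L\cap X$. Since $C_{z_i}$ is a plane curve in $\Pi_{z_i}$ and $L\subset\Pi_{z_i}$ is a line, each $C_{z_i}$ meets $L$ in $b$ points. As $z$ varies in the $(n-1)$-dimensional $Z$, these points must therefore lie in a finite set, so all the curves $C_z$ pass through a fixed point $x_0\in L\cap X$. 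The tangent space $T_{x_0}X$ has dimension $n$, but it would have to contain the tangent directions of the curves $C_z$ at $x_0$. These directions sweep out the whole cone $T_{x_0}Y$, which spans $\langle L,Z\rangle=\PP^N$ with $N\geq 2n+1>n$. This contradicts smoothness of $X$ at $x_0$, so $L\subset X$.

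\emph{Step 2: $Z$ is a rational scroll.} Since $L\subset X$, blow up $X$ along $L$. The projection $\pi_L$ then resolves to a morphism $\widetilde X\to Z$ whose fibres are the residual curves $C_z$ in $\Pi_z$, and the lines through a point of $L$ give a family of curves meeting $L$. Consider, for $x\in L$ general, the fibre of $T_xX$ under projection from $L$: $T_xX\supset L$ and $\pi_L(T_xX)$ is a linear space of dimension $n-2$ contained in the cone $\pi_L(T_xY)$. The collection of these $(n-2)$-planes, as $x$ moves on $L\cong\PP^1$, gives a one-dimensional family of $\PP^{n-2}$'s contained in $Z$. A dimension count (the map $x\mapsto \pi_L(T_xX)$ is nonconstant because $X$ is non-degenerate) shows that they cover $Z$. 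Hence $Z$ is a scroll in $\PP^{n-2}$'s over $L\cong\PP^1$. The ruling is parametrized by the Gauss map restricted to $L$, which is a morphism of $\PP^1$ into $\GG(n-1,N)$. This exhibits $Z$ as the image of $\PP(\mathcal E)$ with $\mathcal E$ a rank $n-1$ bundle on $\PP^1$, via a base-point-free subsystem of $|\mathcal O(1)|$. Hence $Z$ is an external projection of a smooth rational normal scroll $S(a_1,\ldots,a_{n-1})\subset\PP^M$. This step is the main obstacle. The key difficulty is making rigorous that the restriction of the Gauss map of $X$ to $L$ gives an embedding of the relevant projective bundle, i.e.\ that the $\PP^{n-2}$'s are distinct and vary in a smooth family. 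I would first try to prove it using smoothness of $X$ along $L$ and the normal bundle $N_{L/X}\simeq\bigoplus\mathcal O_{\PP^1}(c_i)$.

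\emph{Step 3: linearly normal case.} If $X$ is linearly normal and $Z$ were a proper projection, I would lift $Z$ to $S(a_1,\ldots,a_{n-1})\subset\PP^M$ and $Y$ to a cone $S(L,S(a_i))$. The bundle description of $X$ over $L$ would then give a non-degenerate $X'\subset\PP^{M+2}$ projecting isomorphically to $X$. Linear normality forces $M=N-2$, so $Z$ is a smooth rational normal scroll. Then $Z$ is cut out by quadrics with linear syzygies, so $\tau(Z)=1$ by Example \ref{ex:minimalTan}. Example \ref{cones:omega>1} then gives $\Tan(X)=\Tan(Y)=S(L,\Tan(Z))$. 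Taking a cone does not change the degree, so
\[\deg(\Tan(X))=\deg(\Tan(Z))=2\bigl((N-2)-2(n-1)+1\bigr)=2(N-2n+1),\]
as in the computation of Example \ref{ex:minimalTan}.
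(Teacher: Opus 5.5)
There is a genuine gap, mainly in Steps~2--3, and it shows in the fact that you never use the hypothesis that $Z$ is not a cone. Let $E\simeq\PP(\mathcal E)$ be the exceptional divisor of $\Bl_LX$; this is your bundle over $L$. Since $\widetilde{\pi_L}$ is given by $|H-E|$ and contracts the strict transforms $\widetilde C_z$, one has $E\cdot\widetilde C_z=H\cdot C_z=b$. So $E\to Z$ has degree $b$: each of the $b$ points $x\in C_z\cap L$ satisfies $z\in\pi_L(T_xX)$ (compare $\sum b_i=b\cdot\deg(Z)$ in Remark~\ref{Roth:char}).

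Hence, for $b\geq 2$ (exactly the Roth case used later), the family of $\PP^{n-2}$'s parametrized by $L$ is not birational onto $Z$. The ``embedding'' you plan to prove is false, and your Step~3 lift of $X$ into $S(L,\PP(\mathcal E))$ cannot exist. For a Roth variety with $b\geq2$ one has $\PP(\mathcal E)\subset\PP^M$ with $M=b(N-n)+n-2>N-2$, so your argument would contradict its linear normality (Theorem~\ref{Roth:Ilic}).

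What is needed is ampleness, not injectivity. Write $E=\PP(N^*_{L/X}(1))$ with $N^*_{L/X}(1)=\bigoplus\mathcal O_{\PP^1}(b_i)$. The restriction $\widetilde{\pi_L}|_E$ is given by a base-point-free subsystem of $|\mathcal O_E(1)|$, so $b_i\geq 0$. If some $b_i=0$, then $E$ maps by $|\mathcal O_E(1)|$ onto a cone, and $Z$, being a projection of it, is a cone. So the non-cone hypothesis gives all $b_i>0$, and $Z$ is an external projection of the smooth scroll $S(b_1,\ldots,b_{n-1})$. Without this step, ``hence smooth'' does not follow.

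In the linearly normal case the paper does not lift anything. It observes that $\widetilde{\pi_L}$ is given by the complete system $|H-L|$, of dimension $N-2$, so $Z\subset\PP^{N-2}$ is linearly normal and hence a smooth rational normal scroll. Once that is known, your degree computation via Examples~\ref{ex:minimalTan} and~\ref{cones:omega>1} is fine.

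Step~1 also misses a case. Your argument works when $L\not\subset T_{x_0}X$. Then $T_{x_0}C_z$ is a line $\ell_z\neq L$ in $\Pi_z\cap T_{x_0}X$ with $\pi_L(\ell_z)=z$, so $Z\subseteq\pi_L(T_{x_0}X)\simeq\PP^{n-1}$. This contradicts $\langle Z\rangle=\PP^{N-2}$ with $N-2\geq n$, which is the inequality actually available: $N\geq 2n+1$ is not a hypothesis, and the lines $\ell_z$ sweep only an $n$-dimensional cone, not $T_{x_0}Y$.

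If instead $L$ is tangent to $X$ at $x_0$, then $\Pi_z\cap T_{x_0}X=L$ for general $z$, since otherwise $z\in\pi_L(T_{x_0}X)\simeq\PP^{n-2}$. So every $C_z$ is smooth at $x_0$ with tangent line $L$, and first-order data give no contradiction. The paper covers this case by resolving $\pi_{L|X}$ through blow-ups at the points of $X\cap L$ and at infinitely near points; this works because $X\cap L\subset L$ is a curvilinear scheme. The last exceptional divisor $\PP^{n-1}$ is mapped onto a linear $\PP^{n-1}\subseteq Z$, forcing $Z$ to be linear, which is excluded.
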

\begin{proof} Since $X_L\subseteq Z$, we necessarily have $L\cap X\neq\emptyset$. Also, since $$X\subsetneq S(L,X)\subseteq S(L,S(L,Z))=S(L,Z),$$ we deduce $\dim(S(L,X))=n+1$ so that a general $T_xX$ cuts $L$ in a point by Terracini's  Lemma. Hence $\dim(X_L)=n-1$, yielding  $X_L=Z$. Suppose
$L\cap X\subsetneq L$. Then the projection from $L$ would be solved by blowing-up $X$ in points of $X\cap L$, eventually infinitely near to one of the points
of intersection in $X\cap L$.
The last exceptional divisor $E_i=\PP^{n-1}$ over a point in $X\cap L$ would be sent into a linear $\PP^{n-1}\subseteq Z^{n-1}$, which is impossible because $X\subset\PP^N$ is non-degenerate (in this case $Y=S(L,Z)$ would be a linear subspace of dimension $n+1$).  This contradiction shows
that $L\subset X$. 

If $E\subset \Bl_LX$ is the exceptional divisor and if $\widetilde{\pi_L}:\Bl_LX\to Z$ is the resolution of the projection $\pi_L$ from $L$, then
$Z=\widetilde{\pi_L}(E)\subset \PP^{N-2}$. If $n=2$, then $Z\subset \PP^{N-2}$ is a rational curve being the image of $E=\PP^1$ by a non-constant morphism
of degree $b\geq 1$, where $b=\deg(\langle p,L\rangle\cap X)$ for $p\in Z$ general. 

We shall prove that $Z\subset\PP^{N-2}$ is a rational scroll arguing by induction on $n\geq 3$ (see also \cite[Propositions 4.12 and 4.13]{Ilic.1998}).
Then, letting $E=\PP(N_{L/X}^*(1))$, the restriction of $\widetilde{\pi_L}$ to $E$  is given by a globally generated linear system in $|\mathcal O_{\PP(N_{L/X}^*(1))}(1)|$.
 Suppose  
 \begin{equation}\label{splitE} N_{L/X}^*(1)=\bigoplus_{i=1}^{n-1}\mathcal O_{\PP^1}(b_i)
 \end{equation} 
 with $b_i\geq 0$. If $b_i=0$ for some $i$, then $Z\subset\PP^{N-2}$ would be  a  cone, contrary to our assumption. Hence $b_i>0$  for every $i=1,\ldots, n-1$ and $N_{L/X}^*(1)$ is very ample.
In particular, if $H\subset\PP^N$ is a general hyperplane through $L$, then $Y^{n-1}=X^n\cap H$ is a smooth variety (see also \cite[Proposition 4.9]{Ilic.1998}), contains $L$ and projects from $L$ onto a general hyperplane section $Y_L^{n-2}$ of $Z^{n-1}$. Now recall  that a linearly normal variety whose general curve section is a rational curve
is either a quadric hypersurface, a rational normal scroll or a cone over a Veronese surface in $\PP^5$ by a classical result of Bertini and del Pezzo (see for example \cite{EH}). Then $Z^{n-1}\subset\PP^{N-2}$, which is not a cone by hypothesis and which contains linear spaces of dimension $n-2\geq 1$ coming from $E$, is the external projection of a smooth rational normal scroll because its general hyperplane section $Y_L$ is a rational scroll of dimension $n-2\geq 1$ by induction.

Let $H\subset X$ be a hyperplane section. If $X\subset\PP^N$ is linearly normal, then $\dim(|H-L|)=N-2$. Since $Z=X_L$, then $\widetilde{\pi_L}:\Bl_LX\to Z\subset\PP^{N-2}$ is given by the complete linear system $|H-L|$ and $Z\subset\PP^{N-2}$ is linearly normal, yielding that $Z=S(a_1,\ldots, a_{n-1})\subset\PP^{N-2}$ is a smooth rational normal scroll. 
\end{proof}

The previous result says that smooth varieties contained in developable scrolls in planes $Y^{n+1}$ are
quite rare. For example, if $Y^{n+1}=S(L,Z)$ with $Z^{n-1}\subset\PP^{N-2}$  not a cone and  different from  an external projection of a  smooth rational normal scroll, then any irreducible $X^n\subset Y^{n+1}$ is necessarily singular. 

The manifolds  appearing in the last part of Lemma \ref{lem:LinX} share a notable geometric property, assuming they exist.

\begin{lem}\label{lem:tauXRoth} Let $a_i>0$,  $i=1,\ldots, n-1$, be such that  $a_1+\ldots+a_{n-1}=N-n$, let $X^n\subset S(0,0,a_1,\ldots, a_{n-1})=Y\subset\PP^N$  be a smooth 
variety  and let $\PP^2_z:=\langle z, L\rangle$ with $z\in S(a_1,\ldots, a_{n-1})$  general and $L=\Sing(Y)$. If $\PP^2_z\cap  X=L\cup C_z$ and  if $C_z$ is a smooth curve of degree 
$b\geq 1$, then $\tau(X)=b^2.$
\end{lem}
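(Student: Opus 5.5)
The plan is to reduce the computation of $\tau(X)$ to a degree computation on the single plane curve $C_z$, using the general analysis of Example \ref{cones:omega>1}. Here $Y=S(0,0,a_1,\ldots,a_{n-1})=S(L,Z)$ with $Z=S(a_1,\ldots,a_{n-1})\subset\PP^{N-2}$ a smooth rational normal scroll. By Example \ref{ex:minimalTan}, $\tau(Z)=1$, since rational normal scrolls are cut out by quadrics with linear syzygies. Formula \eqref{tau(X)tau} then gives $\tau(X)=\tau$. Here $\tau$ is the number of tangent spaces $T_xX$, with $x\in C_z$ a smooth point, passing through a general point $s$ of $T_yY=\PP^{n+1}$, for $y\in\PP^2_z$ general. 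Equivalently, $\tau$ is the degree of the curve $C_z^{\#}\subset(T_yY)^*$ traced by the hyperplanes $T_xX\subset T_yY$, provided the map $x\mapsto[T_xX]$ is birational onto its image. I would check birationality first. For $x\in C_z$ we have $T_xX\cap\PP^2_z=t_xC_z$, the tangent line to $C_z$ at $x$. The reason is that $T_xX\subset T_yY$ is a hyperplane and $X\cap\PP^2_z\supseteq C_z$, so the intersection is at least a line containing $t_xC_z$. Since $C_z$ is a smooth plane curve of degree $b\geq 2$ (the case $b=1$ is trivial), it is not a line, and a general tangent line determines its point of tangency. Hence $[T_xX]$ determines $x$.

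Next I would identify $C_z^{\#}$ with the image of $C_z$ under a line bundle. A hyperplane of $T_yY=\PP(V)$ corresponds to a line in $V^*$. The point $[T_xX]$ is the annihilator of the affine cone over $T_xX$ inside $V$, which is the conormal line of $X$ inside $Y$ at $x$, twisted by $\mathcal O(-1)$. Here I use that $T_yY$ is constant along $\PP^2_z$, so all these conormal lines live in the same $V^*$. Thus the map $C_z\to(T_yY)^*$ is given by a subsystem of $|N_{X/Y}|_{C_z}\otimes\mathcal O_{C_z}(-1)|$, and we get
$$\tau=\deg\bigl(N_{X/Y}|_{C_z}\bigr)-\deg\mathcal O_{C_z}(1)=\deg\bigl(N_{X/Y}|_{C_z}\bigr)-b .$$

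To compute $\deg(N_{X/Y}|_{C_z})$, I would restrict to the plane. Locally along $\PP^2_z$, and away from $L$, the variety $Y$ is a product of $\PP^2_z$ with a smooth transversal germ, because it is developable along the planes. So the restriction of a local equation of the Cartier divisor $X\subset Y$ to $\PP^2_z$ is a local equation of the plane curve $X\cap\PP^2_z=L\cup C_z$. This plane curve has degree $b+1$, which I would confirm by intersecting with a general line of $\PP^2_z$, or via $\mathcal O_X(1)$ and the class of $X$ in $Y$. This yields $N_{X/Y}|_{C_z}\simeq\mathcal O_{\PP^2}(b+1)|_{C_z}$, of degree $b(b+1)$, and therefore $\tau=b(b+1)-b=b^2$. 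As a sanity check, this equals the class $b(b-1)$ of $C_z$ plus $b$.

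The main obstacle is the $b$ points of $C_z\cap L$, where $Y$ is singular, so that $X$ need not be Cartier in $Y$ there. The degree computation must be justified on the whole of $C_z$, not only on an open subset. My first attempt would be to work on the blow-up $\Bl_LX$ from the proof of Lemma \ref{lem:LinX}, or directly on the smooth $X$. I would write $T_xX$ as the span of $t_xC_z$ and the $(n-1)$-dimensional $T_yY$-directions transversal to $\PP^2_z$. Then I would verify that the limiting tangent hyperplanes at the points of $C_z\cap L$ extend the line bundle $\mathcal O_{\PP^2}(b+1)|_{C_z}$, with no extra base points. These are exactly the points where $T_xX\cap L=t_xC_z\cap L$ degenerates, so smoothness of $X$ along $L$ must be used there. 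If this extension is delicate, the fallback is to count directly the points $x\in C_z$ with $s\in T_xX$. Projecting $s$ from $\PP^2_z$ would convert this count into the number of tangent lines to $C_z$ through a point determined by $s$, namely $b(b-1)$, plus the $b$ contributions concentrated near $L$. This count would again give $b^2$.
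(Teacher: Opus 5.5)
Your reduction is correct and matches the paper's in substance. Via \eqref{tau(X)tau} you count the hyperplanes $T_xX\subset T_yY=\PP(V)$, $x\in C_z$, through a general point. So you need the degree of the line bundle $V\otimes\mathcal O_{C_z}/\widehat TX|_{C_z}$ (with $\PP(\widehat T_xX)=T_xX$), and this degree is $-\deg(\widehat TX|_{C_z})=c_1(\mathcal P_X|_{C_z})$, which is exactly the quantity the paper computes.

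The gap is in the computation of that degree. Away from $L$, $Y$ is smooth, $X$ is Cartier and you may restrict to the plane. But there a local equation of $X$ restricts to an equation of $C_z$ alone, not of $L\cup C_z$. Moreover $\Pic(\PP^2_z\setminus L)=\Pic(\mathbb A^2)=0$, so ``$N_{X/Y}|_{C_z\setminus L}\simeq\mathcal O_{\PP^2}(b+1)|_{C_z\setminus L}$'' carries no information: on that open set it is equally $\mathcal O_{\PP^2}(b)$ or $\mathcal O$. The whole degree is decided at the $b$ points of $C_z\cap L$. There $X$ is not Cartier in $Y$ (its class is $bH+F$, and $F$ is not Cartier along $L$), so neither $N_{X/Y}|_{C_z}$ nor ``restrict to the plane and take $\deg(L\cup C_z)=b+1$'' has a usable meaning. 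In fact, if you make your idea rigorous on $\Bl_LY$, a smooth $\PP^2$-bundle over $Z$ containing $\Bl_LX$ as a Cartier divisor, the restriction to the fibre over $z$ is the degree-$b$ curve $\widetilde C_z$. So $N_{\Bl_LX/\Bl_LY}(-1)|_{\widetilde C_z}$ has degree $b^2-b$. It agrees with your line bundle off $E$, and the missing $b$ is a correction supported on $E\cap\widetilde C_z$. Both your ``first attempt'' and your ``fallback'' (``plus the $b$ contributions concentrated near $L$'') assert this correction without computing it, so the value $b(b+1)$ is a guess rather than a proof.

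The way around this is never to localize at $L$. Since $X$ is smooth, $\widehat TX$ is a vector bundle along all of $C_z$, dual to $\mathcal P_X$. By \eqref{PXext} your degree therefore equals $(K_X+(n+1)H)\cdot C_z$, and $K_X\cdot C_z$ can be computed globally on $\Bl_LX$, as the paper does:
\begin{enumerate}
\item The curve $\widetilde C_z$ is a smooth fibre of $\widetilde{\pi_L}\colon\Bl_LX\to Z$, hence has trivial normal bundle. Adjunction then gives $K_{\Bl_LX}\cdot\widetilde C_z=2g(C_z)-2=b^2-3b$.
\item We have $K_{\Bl_LX}=\pi^*K_X+(n-2)E$.
\item We have $E\cdot\widetilde C_z=H\cdot C_z=b$, because $H-E$ is pulled back from $Z$.
\end{enumerate}
Together these give $K_X\cdot C_z=b^2-(n+1)b$, hence $\tau(X)=b^2$. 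This intersection-theoretic step accounts for the contribution of the points over $L$ that your local argument cannot see.
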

\begin{proof} The number $\tau$ of tangent spaces to $X$ at points of $C_z\subset\PP^2_z$ passing through the general point of 
$$\PP^{n+1}_z:=\bigcup_{u\in C_z}T_uX=\widetilde q(\mathcal P_{X|C_z})$$
is equal to $\tau(X)$ by \eqref{tau(X)tau}. Hence $$\tau(X)=c_1(\mathcal P_{X|C_z})=(K_X+(n+1)H)\cdot C_z$$ by \eqref{PXext}.

After blowing-up $L$, the strict transform $\widetilde C_z$ of $C_z$ on $\Bl_LX$ is a smooth irreducible fiber of the resolution of indeterminacy $\widetilde{\pi_L}:\Bl_LX\to Z$ of the rational map $\pi_L$. By the Adjunction Formula we get
$$K_{\Bl_LX}\cdot \widetilde C_z=2g(\widetilde C_z)-2=b^2-3b.$$ Hence 
$$K_X\cdot C_z=\pi^*(K_X)\cdot \widetilde C_z=K_{\Bl_LX}\cdot \widetilde C_z-(n-2)(E\cdot \widetilde C_z)=b^2-3b-(n-2)b=b^2-(n+1)b,$$
yielding $$\tau(X)=(K_X+(n+1)H)\cdot C_z=b^2.$$
\end{proof}

In view of the above results we shall  concentrate on the construction of  smooth $$X^n\subset S(0,0,a_1,\ldots, a_{n-1})\subset\PP^N$$ with $a_1+\ldots+a_{n-1}=N-n$ and $a_i>0$. This has already been done by Ilic in \cite{Ilic.1998}, who named these manifolds  as {\it Roth varieties}. 

\begin{rem}\label{nameRoth} Let notation be as above   and let $L=\Sing(S(0,0,a_1,\ldots, a_{n-1}))$. Ilic put  the condition $L\subset X$ in the definition of Roth variety.
This  is
a consequence of smoothness (and not of linear normality of $Z$), as shown in Lemma \ref{lem:LinX} (this has also been essentially  proved in \cite{Ilic.1998}). 

In \cite[\S 3.5]{Roth} Roth studied smooth surfaces $X^2\subset \PP^N$, $N\geq 5$, contained in three dimensional rational normal cones, i.e.  $X^2\subset S(0,a_1,a_2)\subset\PP^N$ with $a_i>0$ and $a_1+a_2=N-2$ or $X^2\subset S(0,0,N-2)\subset\PP^N$. 

A general hyperplane section of the above surfaces is a smooth curve of maximal genus (in the sense of the Castelnuovo bound for the genus of a curve; for this reason they are  also called {\it surfaces of maximal genus}, see \cite[\S 3.5]{Roth} and \cite{Ilic.1998} for a short summary of results on this and on the definition of {\it Castelnuovo varieties}).  The second case is special  in the sense of moduli (see also \cite[Remark 5.7]{Ilic.1998}). The existence of smooth varieties in $S(0,a_1,\ldots, a_{n-1})\subset\PP^N$ with $a_i>0$ and $a_1+\ldots+a_{n-1}=N-2$ is obvious (one can take a general hypersurface not passing through the vertex
of the cone, which  is a point). The other case considered here is less obvious, although already known to Roth. 
\end{rem}

Ilic proved the following characterization of Roth varieties. We formulate it  in a slightly modified form due to the previous remarks.

\begin{thm}[{\cite[Theorem~3.8]{Ilic.1998}}]\label{Roth:Ilic}   
Let $n\geq 2$ and let 
$S=S(0,0,a_1,\ldots,a_{n-1})\subset\PP^N$
with $a_1+\ldots+a_{n-1}=N-n$ and $a_i>0$.
    
For every $b\geq 1$ there exists a  Roth variety $X^n\subset S\subset\PP^N$ of degree $b(N-n)+1$, whose class in the Weil group
of divisors of $S$ is $bH+F$,  with $b\geq 1$, $H$ the class of a hyperplane section of $S$ and $F$ the class of a $\PP^n\subset S$.

 Conversely, if   $X^n \subset S\subset\PP^N$ is a smooth manifold  of degree $d$, then $b=\frac{d-1}{N-n}$ is an integer and 
 the class of $X$ as a Weil divisor on $S$ is  $bH+F$.  In particular, Roth varieties are linearly normal.
 
\end{thm}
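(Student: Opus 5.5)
The natural approach is to resolve the singularities of the cone and compute on a smooth model. Let $\mathcal E=\mathcal O_{\PP^1}^{\oplus 2}\oplus\bigoplus_{i=1}^{n-1}\mathcal O_{\PP^1}(a_i)$ and $\widetilde S=\PP(\mathcal E)\to\PP^1$. Let $\nu:\widetilde S\to S$ be the morphism given by $|\mathcal O_{\widetilde S}(1)|$, with $\widetilde H=c_1(\mathcal O(1))$ and $\widetilde F$ the class of a fiber $\PP^n$. The map $\nu$ contracts $\PP(\mathcal O^{\oplus 2})\simeq \PP^1\times\PP^1$ onto $L=\Sing(S)$. Its exceptional locus has codimension $n-1\geq 1$, and equals a divisor only when $n=2$. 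Weil divisors on $S$ correspond to divisors on $\widetilde S$ modulo those supported on the exceptional locus, so $\mathrm{Cl}(S)=\mathbb Z\langle H,F\rangle$. We have $\widetilde H^{n+1}=N-n$ and $\widetilde H^n\widetilde F=1$, so the degree of a Weil divisor of class $bH+cF$ is $b(N-n)+c$.

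\textbf{Existence.} Consider $|b\widetilde H+\widetilde F|$ on $\widetilde S$. I would take the strict transform viewpoint: a general member $\widetilde X$ of this system has a base locus contained in the sub-bundle $\PP(\mathcal O^{\oplus2})$. Sections of $\mathcal O(b)\otimes\pi^*\mathcal O(1)$ are bihomogeneous forms of degree $b$ in the fiber coordinates $x_0,x_1,y_1,\dots,y_{n-1}$, with coefficients in degrees $1$ and $1+ba_i$ (etc.) on $\PP^1$. Restricted to $\PP^1\times\PP^1$ they are forms of bidegree $(b,1)$, which are base point free. So I expect $\widetilde X$ to be smooth by Bertini, and I would check directly that it meets $\PP^1\times\PP^1$ in a curve $\Gamma$ of bidegree $(b,1)$, i.e.\ a section of the second projection. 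Then $\nu|_{\widetilde X}$ contracts $\Gamma\simeq\PP^1$ isomorphically onto $L$. The subtle point is smoothness of $X=\nu(\widetilde X)$ along $L$. For this I would use a local computation: near a point of $L$, write the equation of $X$ in the cone as $\ell_0 s+\ell_1 t+(\text{higher order in the } y\text{-directions})$, where the linear term in the $y$'s (coming from the $b\widetilde H$ part twisted by the single $\widetilde F$) is nonvanishing because the coefficient forms of degree $1+ba_i$ can be chosen generally. This shows $X$ is Cartier-like and smooth at points of $L$. This local check is the main obstacle, and I would try first to verify it in Roth's surface case $n=2$ and then extend by taking cones and hyperplane sections. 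The degree is then $b(N-n)+1$, as claimed.

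\textbf{Converse.} Let $X\subset S$ be smooth of degree $d$, with class $bH+cF$. By Lemma~\ref{lem:LinX} we know $L\subset X$. Intersect with a general $\PP^2_z=\langle z,L\rangle$: $X\cap\PP^2_z=L\cup C_z$, where $C_z$ is a curve of degree $b$, and $X$ must be smooth along $L$. Choose a plane curve $D\subset\PP^2_z$ in the class corresponding to $bH+cF$ restricted to $\PP^2_z$. Since $F\cap\PP^2_z$ is the line $L$ and a general hyperplane through $L$ counts as well, the restriction is a curve of degree $b+c'$ containing $L$ with some multiplicity. I would instead compute intersection numbers on $\widetilde S$ with a curve $\gamma$ in $\PP^1\times\PP^1$ of class ``ruling over $\PP^1$''. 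Smoothness of $X$ along $L$, meaning the tangent spaces cut $L$ appropriately, forces $\widetilde X|_{\PP^1\times\PP^1}$ to have bidegree $(b,1)$; otherwise $X$ would contain the cone $L\times$ (several points), or be singular at points of $L$ where two branches meet. This gives $c=1$, hence $d=b(N-n)+1$. Linear normality follows from $H^1(\widetilde S,\mathcal O(\widetilde F-(b-1)\widetilde H))=0$ via the Leray spectral sequence, using Kodaira-type vanishing on the projective bundle.
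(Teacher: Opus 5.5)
\textbf{Existence.} Your framework is sound: the resolution $\nu:\widetilde S=\PP(\mathcal E)\to S$, the system $|b\widetilde H+\widetilde F|$, and $\Gamma=\widetilde X\cap E$ of type $(1,b)$ on $E=\PP(\mathcal O^{\oplus 2})\simeq\PP^1\times\PP^1$. But the decisive step, smoothness of $X=\nu(\widetilde X)$ along $L$, is not proved, and the mechanism you sketch is the wrong one.

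\begin{itemize}
\item There is no local equation ``of $X$ in the cone'' along $L$, since the class $bH+F$ is in general not Cartier there.
\item Terms linear in $y_i$ have coefficients of degree $1+a_i$; degree $1+ba_i$ belongs to $y_i^b$. In any case these terms play no role.
\item Going from $n=2$ to higher $n$ ``by cones and hyperplane sections'' cannot produce smooth examples: cones are singular, and sections lower the dimension.
\end{itemize}

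What works is the following. In the chart $\nu(u,x_1,y)=(1:x_1:\ldots:y_iu^j:\ldots)$, the kernel of $d\nu$ at a point of $E$ ($y=0$) is spanned by $\partial_u$, the tangent to the contracted curve $\PP^1\times\{q\}$. The curve $\Gamma$ is the zero locus of $sA(x_0,x_1)+tB(x_0,x_1)$, with $A,B$ general coprime binary forms of degree $b$. So $\Gamma$ meets every contracted curve transversally in exactly one point. Hence $\partial_u f\neq 0$ along $\Gamma$, where $f$ is a local equation of $\widetilde X$. Therefore $\nu|_{\widetilde X}$ is proper, injective and unramified, i.e.\ a closed embedding, and $X\simeq\widetilde X$ is smooth.

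\textbf{Converse.} Here the gap is more serious. You only assert that smoothness forces $\widetilde X|_E$ to have type $(1,b)$. The ``two branches'' argument (Zariski's Main Theorem for a \emph{finite} birational $\widetilde X\to X$) excludes at best $c\geq 2$. You never exclude that $\widetilde X$ contains contracted curves, or all of $E$ (e.g.\ $c\le 0$). For a fixed resolution this really occurs.

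Take $n=3$, $N=5$. Then $S=S(0,0,1,1)=\{x_2x_5=x_3x_4\}$ is a rank $4$ quadric with two rulings $F_A,F_B$ by $\PP^3$'s, and it has two small resolutions. The Segre threefold $\PP^1\times\PP^2\subset\PP^5$, cut out by the $2\times 2$ minors of the matrix with rows $(x_0,x_2,x_3)$ and $(x_1,x_4,x_5)$, has these properties:
\begin{itemize}
\item it is smooth, lies in $S$ and contains $L$;
\item its class is $H+F_B=2H-F_A$;
\item in the resolution adapted to $F_A$, its strict transform is $2\widetilde H-\widetilde F$, which contains $E$.
\end{itemize}
So your claim $c=1$ is false there.

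The missing idea is the blow-up of $L$ used in Lemma~\ref{lem:LinX} and Remark~\ref{Roth:char}; the paper itself only quotes Ilic for this statement.
\begin{enumerate}
\item Since $X$ is smooth and $L\subset X$, $\pi_L$ extends to a morphism on $\Bl_LX$.
\item The fibre over $q\in L$ of the exceptional divisor is a $\PP^{n-2}$, mapping linearly onto $\pi_L(T_qX)\subset Z$.
\item These linear spaces cover $Z$, so they are the fibres of a ruling of $Z$. That ruling is unique unless $Z\simeq\PP^1\times\PP^1$.
\item Take $\widetilde S$ adapted to that ruling, viewed inside $S\times\PP^1$. Then $\Bl_LX\to\widetilde S$ factors through $\widetilde X$ and contracts each $\PP^{n-2}$ to a point.
\item Hence $\widetilde X\to X$ is bijective, so it is an isomorphism because $X$ is normal.
\item Then the scheme $\widetilde X\cap\nu^{-1}(q)$ is a reduced point, i.e.\ $c=\widetilde X\cdot(\PP^1\times\{q\})=1$.
\end{enumerate}

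\textbf{Smaller corrections.}
\begin{itemize}
\item For $n=2$ the exceptional locus $E\sim\widetilde H-(N-2)\widetilde F$ is a divisor. So $\mathrm{Cl}(S)\simeq\mathbb Z F$ with $H=(N-2)F$, not $\mathbb Z\langle H,F\rangle$, and $b,c$ must be read off the strict transform in $\Pic(\widetilde S)$.
\item For linear normality the relevant group is $H^1(\widetilde S,\mathcal O(-(b-1)\widetilde H-\widetilde F))$; your sign on $\widetilde F$ is wrong. Its vanishing follows directly from $R^i\pi_*\mathcal O(-k)=0$ for $0<i<n$, with no Kodaira-type vanishing needed. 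You also need $X\simeq\widetilde X$.
\end{itemize}
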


\begin{rem}\label{Roth:char}
For an arbitrary smooth projective variety $X^n\subset\PP^N$ of degree $d$ the linear system
$|(d-n-2)H-K_X|$ is base-point free as pointed out by Mumford (see the introduction of \cite{Ilic.1998}
for a discussion of this property).

By \eqref{splitE} and by the arguments in the proof of Lemma \ref{lem:LinX}, for a Roth variety
$X^n\subset S(L,Z)\subset\PP^N$ of degree $d=b(N-n)+1$ we have 
 $$\deg(N^*_{L/X})+(n-1)=\sum_{i=1}^{n-1}b_i=b\cdot \deg(Z)=b\cdot (N-n),$$
 yielding 
 $$K_X\cdot L=\deg(N^*_{L/X})-2=b(N-n)-n-1$$
and 
$$((d-n-2)H-K_X)\cdot L=0.$$
The non-ampleness of  $|(d-n-2)H-K_X|$  characterizes (isomorphic projections of) Roth
varieties by the main result of \cite{Ilic.1998}.
\end{rem}

\begin{rem}\label{Roth:sec}

For a Roth variety of degree $d=b(N-n)+1$ a general plane curve $C_z=\langle L,z\rangle \cap  X$ is smooth and  has degree $b\geq 1$. 

 Let $X^n\subset\PP^N$, $N\geq 2n+2$,  be a Roth variety of dimension $n\geq 2$. Then through a general point $p\in\Sec(X)$ there pass
$b^2$ secant lines. Indeed, if $p\in\langle p_1,p_2\rangle$, $p_i\in X$, and if $C_{z_i}\subset X$ are the plane curves of degree $b$ passing through $p_i$, then 
$$\langle C_{z_1}, C_{z_2}\rangle=\PP^3_p.$$
Hence through $p$ there pass $b^2$ secant lines to $C_{z_1}\cup C_{z_2}$ by B\' ezout's  Theorem. Conversely, all  secant lines to $X$ passing through $p$ project from $L$ onto a secant line
to $S(a_1,\ldots, a_{n-1})\subset \PP^{N-2}$ passing through $r:=\pi_L(p)\in \Sec(S(a_1,\ldots, a_{n-1}))$. Since through $r$ there passes a unique secant line $\langle z_1,z_2\rangle$ to $S(a_1,\ldots, a_{n-1})$,
then every secant line to $X$ passing through $p$ is contained in $\PP^3_p$ and hence it is one of the secant lines described above. The linear space $T_p\Sec(X)$ is
tangent to $X$ along $C_{z_1}\cup C_{z_2}$ although $\dim(\Sec(X))=5$ (see \cite{Chiantini.Ciliberto.wd} for a modern treatment of this phenomenon, called {\it weakly defectiveness} by Chiantini and Ciliberto).

When $p_2$ tends to the general point $p_1$, the secant line $\langle p_1,p_2\rangle$ becomes a tangent line to $X$ at $p_1$ and  the curve $C_{z_1}\cup C_{z_2}$ degenerates into a double structure on $C_{z_1}$. The line $\langle z_1,z_2\rangle$ becomes a tangent line $M$ to $S(a_1,\ldots, a_{n-1})$ at $z_1$ and $\PP^3_p$ degenerates into
$\langle L,M\rangle=\PP^3$. Hence the $b^2$ tangent spaces passing through a general point  $t\in\Tan(X)$ are determined by $b^2$ normal directions to the double structure
on $C_{z_1}$ passing through $t$.

When $Y^3=S(p, TZ)$ or when $Y=T^{(2)}Z$  one can construct (singular) irreducible surfaces $X\subset\PP^N$, $N\geq 6$, with  $\mu(X)=1$  and $\tau(X)\geq 2$
and also smooth irreducible surfaces with $\tau(X)=1$ and $\mu(X)\geq 1$ (see also \cite{Chiantini.Ciliberto.wd}).
\end{rem}

We now proceed to the classification of non-degenerate irreducible surfaces $X\subset \PP^N$ with $\tau(X)\geq 2$ and with $N\geq 5$. We start with a general remark.

\begin{pr}\label{Contact locus of TX in X is 1-dimensional}
    Let $N\geq 2n+1$ and let $X^n\subset\PP^N$  be a  non-degenerate variety  such that $\tau(X)\geq 2$. Let 
       $$\PP^{\varrho(X)}_w=\overline{\mathcal G_{\Tan(X)}^\inv
(\mathcal G_{\Tan(X)}(w))}$$ with $w\in \Tan(X)$ general. Then $\varrho(X)\geq 2$ and  $\PP^{\varrho(X)}_w\cap X$ contains an equidimensional curve $C_w$ and $T_w\Tan(X)$ is tangent to $X$ along $C_w\cap X_{\reg}$. 
\end{pr}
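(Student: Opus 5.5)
Here $\varrho(X)$ is read as the Gauss defect of $\Tan(X)$, i.e. the dimension of the closure of a general fibre of $\mathcal G_{\Tan(X)}$. The plan is to exploit Proposition \ref{TX is developable} together with $\tau(X)\geq 2$. Since $\tau(X)\geq 1$ we have $\dim(\Tan(X))=2n$. Fix a general $w\in\Tan(X)$. By definition of $\tau(X)$ there are $\tau(X)\geq 2$ distinct smooth points $x_1,\dots,x_{\tau}\in X$, each general in $X$, with $w\in T_{x_i}X$. By Proposition \ref{TX is developable}, $T_w\Tan(X)\supseteq T_{x_i}X$ for every $i$, and $T_w\Tan(X)$ is constant along the line $\langle x_i,w\rangle$. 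Hence the fibre $\PP^{\varrho}_w$ of the Gauss map of $\Tan(X)$ through $w$ contains the lines $\langle x_1,w\rangle$ and $\langle x_2,w\rangle$. These lines are distinct because $x_1\neq x_2$ and $w\notin X$. Since $\PP^{\varrho}_w$ is a linear space, we get $\varrho\geq 2$, and the plane $\langle x_1,x_2,w\rangle$ lies in $\PP^{\varrho}_w$.

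Next I produce the curve. Let $F:=\PP^{\varrho}_w$. For every $w'\in F\cap\Tan(X)_{\reg}$ general we have $T_{w'}\Tan(X)=T_w\Tan(X)=:\Pi$. I consider the incidence
\[
I_w=\overline{\{x\in X_{\reg}\ :\ T_xX\cap F\neq\emptyset \text{ and } T_xX\subseteq \Pi\}}.
\]
Every point $w'\in F$ general lies on $\tau(X)$ tangent spaces $T_{x'}X$. For each of them Proposition \ref{TX is developable} gives $T_{x'}X\subseteq T_{w'}\Tan(X)=\Pi$, and also $\langle x',w'\rangle\subseteq F$, so $x'\in F$. Thus, as $w'$ moves in the $\varrho$-dimensional $F$, the points $x'$ sweep out a subvariety of $F\cap X$. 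Each $T_{x'}X\cap F$ has dimension at most $\varrho$, and the map from the incidence $\{(x',w')\}$ to $F$ is generically finite. Hence the family of such $x'$ has positive dimension as soon as a single $T_{x'}X\cap F$ does not fill $F$. It fills $F$ only if $F\subseteq T_{x'}X$, and that would force all $\tau\geq 2$ tangent spaces through $w$ to coincide with $F$ near $w$, contradicting genericity (two general tangent spaces are distinct, and $\tau(X)$ counts distinct ones). Hence $F\cap X$ contains a positive-dimensional locus of points $x'$ with $T_{x'}X\subseteq\Pi$.

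To pin it down as an equidimensional curve $C_w$, I would use that $X$ is non-developable with $\dim\Tan(X)=2n$. The tangent spaces $T_xX$ contained in the fixed hyperplane-type space $\Pi$ (of dimension $2n$) form a family $\Sigma$ of points of $\widetilde X$. I would then compute $\dim\Tan(X)=\dim\widetilde{\Tan(X)}+\varrho$. Since $\Tan(X)$ is swept by the families $\{T_xX: x\in\Sigma\}$, a dimension count gives $\dim\Sigma=\varrho-n+(\text{contribution})$. The aim is to show $\dim\Sigma=1$ using $\dim\widetilde{\Tan(X)}=2n-\varrho$ and $\dim\widetilde X=n$. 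Concretely, the union of the $T_xX$ over $x\in\Sigma$ is $\Pi$-contained and covers $F$, while each general $x$ lies in exactly a $(2n-\varrho)$-codimensional family of such fibres. Taking the equidimensional part of the positive-dimensional components of $\Sigma\cap F$ passing through the $x_i$ gives $C_w$, and tangency of $\Pi$ to $X$ along $C_w\cap X_{\reg}$ is then immediate from $T_xX\subseteq\Pi$.

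The main obstacle is this last dimension count, namely showing that the contact locus is exactly one-dimensional rather than larger. If it fails, I would fall back on the weaker statement that $F\cap X$ contains a positive-dimensional locus along which $\Pi$ is tangent, and take $C_w$ to be a general curve section of it.
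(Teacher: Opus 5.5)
\textbf{Gap: positive-dimensionality of the contact locus.} Your first step is the paper's: Proposition \ref{TX is developable} puts the lines $\langle w,x_i\rangle$ into $F=\PP^{\varrho}_w$, so $\varrho\geq 2$. (Note that $x_1\neq x_2$ and $w\notin X$ do not by themselves exclude that $w,x_1,x_2$ are collinear; the paper appeals to the fact that a general tangent space is not bitangent.) Your incidence $\Gamma=\overline{q^{-1}(F)}$ is also sound: every contact point $x'$ of a general $w'\in F$ lies in $F$ and has $T_{x'}X\subseteq\Pi$.

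The gap is the claim that $p(\Gamma)$ is positive-dimensional. The component of $\Gamma$ through $(x_j,w)$ is contracted by $p$ exactly when $F\subseteq T_{x_j}X$. So what you must exclude is
\[
F\subseteq T_{x_1}X\cap\cdots\cap T_{x_\tau}X .
\]
Your reason, that one such inclusion forces all $\tau$ tangent spaces through $w$ to coincide with $F$, does not follow. An inclusion $F\subseteq T_{x_j}X$ says nothing about the other $T_{x_i}X$. Even if all of them contain $F$, you contradict $T_{x_1}X\neq T_{x_2}X$ only when $\varrho\geq n$: for $\varrho>n$ the inclusion is impossible, and for $\varrho=n$ it gives $T_{x_1}X=F=T_{x_2}X$. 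So the corrected argument covers surfaces, where $\varrho\geq 2=n$. For $n\geq 3$ and $2\leq\varrho<n$ you still need a genuine reason why the contact points cannot all stay fixed as $w'$ runs over $F$.

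\textbf{Comparison with the paper.} The paper does not sweep the whole fibre. Inside an analytic ball where $q$ is a $\tau$-sheeted covering, it moves $z$ along the single line $\langle w,x_i\rangle\subset F$ and follows the contact point $x(z)$ in another sheet $j\neq i$. The same developability argument you use puts $\langle z,x(z)\rangle$, hence $x(z)$, inside $F$, and the traced locus is at most a curve by construction. That point is constant exactly when $\langle w,x_i\rangle\subseteq T_{x_j}X$. Your bad case implies this for all pairs, so your sweep is the more robust construction. However, the paper simply asserts that the traced locus is a curve, and you likewise must prove non-contraction rather than rely on the invalid contradiction.

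\textbf{Your third paragraph.} Drop it. Its dimension count is not coherent as written: $\Sigma$ is defined inside $\widetilde X$ but then intersected with $F$, and $\Pi$ is not a hyperplane unless $N=2n+1$. It is also unnecessary. The statement only asks that $F\cap X$ \emph{contain} an equidimensional curve. Once $p(\Gamma)$ is positive-dimensional, any irreducible curve in $\overline{p(\Gamma)}\subseteq F\cap X$ through some $x_i$ will do. Tangency of $\Pi$ along all of its smooth points follows because $\{x\in X_{\reg}: T_xX\subseteq\Pi\}$ is closed in $X_{\reg}$ and contains a dense subset of $p(\Gamma)$. Your ``fallback'' is therefore already the full statement; the only missing ingredient is the non-contraction above.
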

\begin{proof}
     Let $w\in \Tan(X)$ be general. By hypothesis, there exist $x_1,x_2,\dots,x_{\tau(X)}\in X_{\reg}$ such that $w\in T_{x_i}X$ and $T_{x_i} X\neq T_{x_j} X$ for all $i\neq j$. Also, we can assume $\langle w,x_i\rangle\cap \langle w,x_j\rangle=\{w\}$ since a general projective tangent space to $X$ is not bitangent by the non developability of $X$. We have $T_w \Tan(X) =T_z \Tan(X)$ for $z\in \langle w, x_i \rangle \cap \Tan(X)_{\reg}$ by  
     Proposition \ref{TX is developable}.  Hence, $\langle w,x_i ,x_j\rangle\subseteq\PP^{\varrho(X)}_w$, yielding $\varrho(X)\geq 2$.

 Let $ U_w\subset \Tan(X)_{\reg}$ be an open analytic ball centered at $w$ such that $$q_{|q^\inv(U_w)}:q^\inv(U_w)\rightarrow U_w$$ is a holomorphic $\tau(X)$-covering map. Also, let $q^\inv(U_w)=\bigsqcup_{i=1}^{\tau} U_w^i$, where $U_w^i\subseteq T_X$ is an open analytic neighborhood of $(x_i,w)$ isomorphic to $U_w$ via $q$. We can shrink $U_w$ as much as necessary so that  $x_i\notin U_w$ for every $i$. Lastly, define $C_w^i:=q_{|U_w^i}^{-1}(\langle w,x_i \rangle\cap U_w)\subset U_w^i$.
    
    Fix $i\in\{1,2,\dots,\tau(X)\}$. Let $z\in \langle w,x_i \rangle\cap U_w$ be general and let $x\in X_{\reg}\setminus x_i$ be such that $(x,z)\in C_w^j$ for some $j\neq i$, which in particular implies $z\in T_x X$. 
 Then $\overline{p(C_w^j)}\subset X$ (Zariski closure) is a curve passing through $x_j$ and $x$ and we let
 $$C_w=\bigcup_{j=1}^{\tau(X)}\overline{p(C_w^j)}\subset X.$$
  We claim that this curve is contained in $\PP^{\varrho(X)}_w$.
 Indeed if $i\in\{1,\ldots,\tau(X)\}$ is fixed, if $z\in \langle w,x_i \rangle$ is general, then $\langle z, x \rangle \cap \Tan(X)_{\reg}\neq \emptyset$ by the generality of $w$ and $\langle z, x\rangle\subset T_xX$ by definition of $C_w^i$. So,  for $y\in \langle z, x \rangle \cap \Tan(X)_{\reg}$ general, we have 
  $$T_y\Tan(X) =T_z\Tan(X)=T_w \Tan(X),$$ yielding  $y\in \PP^{\varrho}_w$.  Hence $\langle z,x\rangle \subset \PP^{\varrho(X)}_w$ and $C_w\subset \PP^{\varrho(X)}_w$ by the generality of $x$, proving the  claim.    Let notation be as above. If $x\in C_w\cap X_{\reg}$, then $T_w\Tan(X)=T_z\Tan(X)\supset T_xX$ by the last part of Proposition \ref{TX is developable}, concluding the proof.
 \end{proof}

Let us remark that that  by monodromy of the covering we have that either $C_w$ is irreducible or there exist
 $\tau(X)\geq 2$ distinct irreducible components of $C_w$. Moreover,  $\deg(C_w)\geq 2$ holds also
 in the irreducible case because $X\subset\PP^N$ is not a linear subspace.
\medskip

Suppose now $N\geq 5$ and that $X^2\subset\PP^N$ is an irreducible surface with $\tau(X)\geq 2$. Since $\dim(\Tan(X))=4$ and since $\varrho(X)\geq 2$ we only have  two possibilities:
either $\varrho(X)=2$ or $\varrho(X)=3$. We treat the two cases separately.

\begin{lem}\label{varrho=2} Let $N\geq 5$ and let $X^2\subset\PP^N$ be a non-degenerate irreducible surface such that $\tau(X)\geq 2$ and $\varrho(X)=2$. Then $N=5$,
$X\subset\PP^5$ is projectively equivalent to the Veronese surface $\nu_2(\PP^2)\subset\PP^5$ and $\tau(X)=2$. 
\end{lem}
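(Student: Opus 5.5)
We use the picture given by Proposition \ref{Contact locus of TX in X is 1-dimensional}. Since $\dim(\Tan(X))=4$ and $\varrho(X)=2$, the Gauss map of $\Tan(X)$ has two-dimensional image $\widetilde W$. Its general fibre is a plane $\PP^2_w$. By Proposition \ref{Contact locus of TX in X is 1-dimensional}, $\PP^2_w\cap X$ contains an equidimensional curve $C_w$ of degree at least two, along whose smooth points $T_w\Tan(X)$ is tangent to $X$.

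\textbf{Step 1: a two-dimensional family of plane conics.} As $w$ varies, the planes $\PP^2_w$ form a two-dimensional family, so $X$ carries a two-dimensional family of plane curves $C_w$. Through a general $x\in X$ there pass infinitely many of them, since $w$ can range over the plane $T_xX$ modulo the fibres. If two general points $x,y\in X$ lay on a common $C_w$, then $\langle x,y\rangle\subset\PP^2_w\subset\Tan(X)$, and hence $\Sec(X)=\Tan(X)$. We aim to prove exactly this.

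Take $w$ general and two general points $x_1,x_2$ of $C_w$; these exist by the construction in the proof of Proposition \ref{Contact locus of TX in X is 1-dimensional}. The planes $T_{x_1}X$ and $T_{x_2}X$ both lie in the fixed hyperplane-like space $T_w\Tan(X)\cong\PP^4$, and they meet in $w$. Letting $w$ move in $T_{x_1}X$ gives a two-parameter family of points $x_2$. So the curves $C_w$ through $x_1$ sweep out $X$, and two general points of $X$ lie on some $C_w$.

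It follows that $\Sec(X)=\Tan(X)$ has dimension 4. By Terracini's Lemma, $\delta(X)=1$ and the entry locus through a general point of $\Sec(X)$ is the curve $C_w\subset\PP^2_w$. Since $C_w$ is a plane curve whose secant lines through a general point of $\PP^2_w$ are determined by $\tau$, a Bézout count in $\PP^2_w$ shows that $C_w$ is a conic, and in fact irreducible and reduced, because $X$ is irreducible and not a plane.

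\textbf{Step 2: conclude via the Severi-type classification.} The surface $X$ is now covered by a two-dimensional family of conics, with one conic through two general points. It is non-degenerate in $\PP^N$, $N\geq5$, and satisfies $\dim\Sec(X)=4<5$. By the classical theorem of Severi (surfaces with defective secant variety), or by the classification of conic-connected surfaces \cite{IR}, $X$ is either a cone or the Veronese surface $\nu_2(\PP^2)$ or one of its projections. A cone is developable and is therefore excluded, since $\dim\Tan(X)=4$. Projections into $\PP^4$ are excluded because $N\geq5$, so $X=\nu_2(\PP^2)\subset\PP^5$. Finally, $\tau(\nu_2(\PP^2))=2$: the point $w$ lies on exactly the two tangent lines to the conic $C_w$ through it, and $\tau(X)$ equals the number of such tangent lines by the argument of Example \ref{QELex}.

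\textbf{Main obstacle.} The hard part is the first part of Step 1: showing rigorously that two general points of $X$ lie on a common contact curve $C_w$, which is what gives $\Sec(X)=\Tan(X)$. The plan is a dimension count on the incidence $\{(w,x_1,x_2): x_i\in C_w\}$, which has dimension $2+1+1=4$ and should map dominantly onto $X\times X$. Dominance amounts to ruling out that all the $C_w$ through $x_1$ sweep out only a curve. If they did, a general $x$ would lie on a single curve $C$ (or finitely many) containing all the $C_w$. Then $T_xX\subset T_w\Tan(X)$ for every $w\in T_xX$, which forces $\Tan(X)$ to have a one-dimensional Gauss image, contradicting $\varrho=2$.
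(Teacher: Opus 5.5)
Your overall plan is viable, but the argument you give for the step you yourself flag as the crux does not work. The inclusion $T_xX\subset T_w\Tan(X)$ for general $w\in T_xX$ is Proposition~\ref{TX is developable}, and it holds for every $X$. It therefore cannot produce a contradiction, and nothing in your reasoning forces the Gauss image of $\Tan(X)$ to be a curve. Your dominance argument also never uses $\tau(X)\geq 2$. Without some input of this kind, the curves $C_w$ could a priori be lines shared by many planes $\PP^2_w$. In that case the incidence of dimension $2+1+1=4$ has positive-dimensional fibres over $X\times X$, and dominance fails.

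The missing idea is that $C_w$ spans $\PP^2_w$, so distinct fibres of the Gauss map of $\Tan(X)$ carry distinct curves. This is what the paper uses when it says the $C_w$ form a two-dimensional family birationally parametrized by the Gauss image.

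\begin{itemize}
\item $C_w$ contains points $x_1,x_2$ with $w\in T_{x_i}X$, $w\notin\langle x_1,x_2\rangle$ and $T_{x_1}X\neq T_{x_2}X$.
\item If $C_w$ lay in a line, that line would be $\langle x_1,x_2\rangle\subset X$, and then $T_{x_1}X=\langle x_1,x_2,w\rangle=T_{x_2}X$, a contradiction.
\item A two-dimensional family of pairwise distinct curves cannot lie in a fixed curve, since a curve has finitely many components. Hence a general $x$ lies on a one-parameter subfamily.
\item The members of that subfamily again cannot all lie in a fixed curve. So they sweep out $X$, and two general points lie on a common $C_w$.
\end{itemize}

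With this repaired, your route is somewhat shorter than the paper's. From $\langle x_1,x_2\rangle\subset\PP^2_w\subset\Tan(X)$ you get $\Sec(X)=\Tan(X)$ directly. Severi's theorem then gives the Veronese surface once cones are excluded; it is valid for arbitrary irreducible surfaces, whereas \cite{IR} requires smoothness. This bypasses the paper's identification of the $C_w$ with irreducible conics and its appeal to C.~Segre's theorem.

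In particular your B\'ezout step becomes unnecessary. As written it proves nothing, because knowing only $\tau(X)\geq 2$ cannot bound $\deg C_w$ from above. If you want it, use the paper's argument: a general secant line $\langle x_1,x_2\rangle\subset\PP^2_w$ is not trisecant, so $\deg C_w=2$.
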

\begin{proof} Let $w\in\Tan(X)$ be general and let $C_w\subset \PP^2_w\cap X$ be the curve constructed in Proposition \ref{Contact locus of TX in X is 1-dimensional}. The curves $C_w\subset X$  vary  in a two dimensional family $\mathcal C$ birationally parametrized by the Gauss image of $\Tan(X)$.
These curves cover $X$ and through two general points $x_1,x_2\in X$ there passes at least one plane curve $C\in\mathcal C$, yielding $\deg(C)\geq 2$.
The general secant line $L=\langle x_1, x_2\rangle$ to $X$ is not trisecant so that $\deg(L\cap  C)=2$, showing that  the curves $C_w$ are conics.
Through a general point $p\in L$ there pass infinitely many secant lines to $X$, yielding  $\dim(\Sec(X))\leq 4$. Therefore $\Tan(X)=\Sec(X)\subsetneq\PP^N$, a general point $p\in L$ is also a general point of $\Tan(X)$ and the entry locus $\Sigma_p\subset X$ is a conic (see for example \cite[Theorem 1.1]{Chiantini.Ciliberto.wd}), which coincides with $C_p$.  Since $\tau(X)\geq 2$, the conic $C_p$ is irreducible. 
A classical result by C. Segre assures that  a  non-degenerate irreducible surface $X^2\subset\PP^N$ with $N\geq 5$  and containing a 
two--dimensional family of irreducible conics
is projectively equivalent to a Veronese surface in $\PP^5$ (see for example the proof of \cite[Theorem 3.4.1]{Russo.Book2016}).
\end{proof}
\begin{rem}
More generally, the above proof can be easily adapted to prove that if $N\geq 2n+1$ and if $X^n\subset\PP^N$ is a non-degenerate variety with $\tau(X)\geq 2$, then
$\Tan(X)=\Sec(X)$, $X$ is a $QEL$-variety of type $\delta=1$ and $\tau(X)=2$. 
\end{rem}
We now consider the case $\varrho(X)=3$.

\begin{lem}\label{varrho=3} Let $N\geq 5$ and let $X^2\subset\PP^N$ be a non-degenerate irreducible surface such that $\tau(X)\geq 2$ and $\varrho(X)=3$. Then
$X^2\subset Y^3\subset \PP^N$ where $Y^3$ is the locus of the focal planes of the developable family of $\PP^3$'s determined
by $\Tan(X)$. 

In particular, $X^2\subset\PP^N$ is contained in a three-dimensional developable scroll in planes such that a general plane of the scroll
cuts $X$ along a plane curve of degree at least two.
\end{lem}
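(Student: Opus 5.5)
Since $\varrho(X)=3$ and $\dim(\Tan(X))=4$, the Gauss image $\widetilde W$ of $W:=\Tan(X)$ is a curve, and the closures of the fibers of $\mathcal G_W$ are $3$-planes $\PP^3_t$, $t\in\widetilde W$. So $W$ is the non-degenerate one-dimensional family of $3$-spaces \eqref{Tkuniv} with $r=3$, and it is developable in the sense of \S\ref{dev:fam}. By the discussion there, a general $\PP^3_t$ meets its infinitely near space $(\PP^3_t)'$ in a plane $\PP^2_t$. These planes consist of foci of the family and lie in $\Sing(W)$. Set $Y:=\overline{\bigcup_t\PP^2_t}$. By the analysis in \S\ref{dev:fam}, either all $\PP^2_t$ coincide, so that $W$ is a cone with vertex a plane, or $Y^3$ is a developable scroll in planes over the curve $\widetilde W$. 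In the latter case the $\PP^2_t$ are the osculating planes of the family of osculating $2$-spaces to the edge curve.

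\textbf{Step 1: $C_w\subset\PP^2_t$.} Let $w\in W$ be general with $t=\mathcal G_W(w)$. By Proposition \ref{Contact locus of TX in X is 1-dimensional}, $\PP^3_t\cap X$ contains a curve $C_w$, and $T_w\Tan(X)=T_tW$ is tangent to $X$ along $C_w$. For $x\in C_w\cap X_{\reg}$ general, $T_xX$ is a plane in $\PP^3_t$: it is contained in the fibre because $w$ lies on the $\tau$ tangent planes through it and the fibre is spanned by them, as in the proof of Proposition \ref{Contact locus of TX in X is 1-dimensional}. Hence $\PP^3_t\supseteq\overline{\bigcup_{x\in C_w}T_xX}$, and this union is all of $\PP^3_t$ because $X$ is not developable.

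Now move $t$ infinitesimally. The points of $X$ in $\PP^3_t$ are limits of points of $X$ in nearby $\PP^3_{t'}$. Equivalently, for $x\in C_w$ we have $x\in T_xX\subset\PP^3_t$, and the tangent planes at points of $X$ near $x$ lie in the nearby fibers. So $x$ is a focal point of the one-dimensional family $\{\PP^3_t\}$. The focal locus on $\PP^3_t$ of the developable family is exactly $\PP^2_t=\PP^3_t\cap(\PP^3_t)'$. To make this precise, I would argue as in Example \ref{dev:surf}: take a local curve $\gamma\subset X$ through $x$ transversal to $C_w$. Then $\mathcal G_W$ restricted to $X$ maps $\gamma$ onto $\widetilde W$, since $\mathcal G_W(X)=\widetilde W$. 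The derivative of the point $\gamma(s)\in\PP^3_{t(s)}$ shows that $x$ lies in the kernel of $\partial/\partial t$, i.e. in $(\PP^3_t)'$. Therefore $C_w\subseteq\PP^2_t$, and in particular $X\subseteq Y$.

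\textbf{Step 2: $Y$ is a genuine three-dimensional developable scroll.} If the $\PP^2_t$ were constant, $X$ would lie in a plane, contradicting non-degeneracy. So $\dim Y=3$ and $Y$ is the locus of the focal planes, as stated. It is developable because, as recalled in \S\ref{dev:fam}, the family of focal $(r-1)$-spaces of a developable non-cone family is again developable. If $W$ is a cone with vertex $\PP^s$, $s\le1$, I would first reduce to the non-cone case $W=S(\PP^s,W')$, as explained there.

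\textbf{Step 3: degree of the plane curves.} A general $\PP^2_t$ meets $X$ in a plane curve containing $C_w$. By the remark after Proposition \ref{Contact locus of TX in X is 1-dimensional}, $\deg(C_w)\ge2$. Alternatively, if the curve were a line, Example \ref{cones:omega>1} would give $\tau(X)=1$, a contradiction.

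The main obstacle is Step 1: proving rigorously that the contact curve lies in the focal plane rather than merely in $\PP^3_t$. I would do this with the local parametrization \eqref{parY}, writing points of $X$ as $\sum\lambda_i(t,s)a_i(t)$ and differentiating in $t$.
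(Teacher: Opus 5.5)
Your proposal is correct in outline but reaches the key inclusion $C_w\subset\PP^2_t$ by a different route from the paper, and one step needs a real argument.

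\textbf{The paper's route (second order).} By \eqref{diffG}, $T^{(2)}_xX=T_z\Tan(X)=:\PP^4_t$ is constant along $C_t$, so $C_t$ lies in a fibre of the second Gauss map $\mathcal G^{(2)}_X$. After reducing to $N=5$, the hyperplane $\PP^4_t$ cuts $X$ along $3C_t$ near $x$. Differentiating twice in $t$ then shows that $\PP^4_t\cap(\PP^4_t)'=\PP^3_t$ cuts $2C_t$, and $\PP^4_t\cap(\PP^4_t)'\cap(\PP^4_t)''=\PP^2_t$ cuts $C_t$.

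\textbf{Your route (first order).} You lift $X$ to the incidence variety of $\{\PP^3_t\}$ via $x\mapsto t(x)$. In
$$\hat\gamma'(0)=\sum_i\lambda_i'(0)a_i(t_0)+t'(0)\sum_i\lambda_i(0)a_i'(t_0),$$
the facts $\hat\gamma'(0)\in\hat T_xX\subseteq\mathcal L(a_0(t_0),\ldots,a_3(t_0))$ and $t'(0)\neq 0$ put $x$ in the kernel of $\partial/\partial t$, i.e.\ in $\PP^2_t$. This is valid for every $N$. It avoids osculating spaces, the reduction to $N=5$, and the rather heuristic step of differentiating the divisor $t^3$. The paper's route gives extra contact information: $\PP^4_t$ osculates $X$ along $C_t$ and $\PP^3_t$ is tangent to second order, in the spirit of Segre's higher-order foci. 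Your Steps 2 and 3 are fine.

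\textbf{The step to repair.} Everything rests on $T_xX\subseteq\PP^3_t$. Proposition \ref{Contact locus of TX in X is 1-dimensional} only gives $T_xX\subseteq T_w\Tan(X)=\PP(\mathcal L(a_i,a_i'))$, a $\PP^4$, and with that your derivative computation yields nothing. Your stated reason (``the fibre is spanned by them'') is circular. A correct argument runs as follows.
\begin{itemize}
\item For general $x\in X$, Proposition \ref{TX is developable} shows that $\mathcal G_{\Tan(X)}$, restricted to $T_xX$, is constant on lines through $x$.
\item If this restriction were non-constant, its image would be dense in the curve $\widetilde W$. Then $x$ would lie in every $\PP^3_t$, so $X\subseteq\bigcap_t\PP^3_t$, contradicting non-degeneracy.
\item Hence $T_xX\subseteq\PP^3_{t(x)}$ for general $x$.
\item For $x\in C_w$ we get $t(x)=t$, because $T_xX$ contains a general point $z\in\langle w,x_i\rangle\subset\PP^3_t$ of $\Tan(X)$, and such a $z$ lies on a unique $\PP^3$ of the ruling.
\end{itemize}

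\textbf{A related imprecision.} $\mathcal G_{\Tan(X)}$ is not defined on $X\subseteq\Sing(\Tan(X))$. The map you need is $x\mapsto t(x)$, which is essentially the paper's $\mathcal G^{(2)}_X$. It is dominant by non-degeneracy, and by generic smoothness it is submersive at a general point of $C_w$, which lies in one of its fibres. This is what guarantees $t'(0)\neq0$ for $\gamma$ transversal to $C_w$.
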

\begin{proof} Let $W=\overline{\mathcal G_{\Tan(X)}(\Tan(X))}\subset \GG(4,N)$ be the Gauss image of $\Tan(X)$, which is a developable scroll in $\PP^3$'s over a curve birational to $W$.
Let $\PP^3_t\subset \Tan(X)$ be the closure of the fiber of the Gauss map of $\Tan(X)$ over a general $t\in W$ and let 
 $C_t\subset \PP^3_t\cap X$  be the curve constructed in Proposition \ref{Contact locus of TX in X is 1-dimensional}.
 
 The locus of foci in  $\PP^3_t$ of the family of developable $\PP^3_t$'s is  $\PP^2_t:=\PP^3_t\cap (\PP^3_t)^\prime$.  Since $\Tan(X)$ is not a cone with vertex a fixed plane,
the $\PP^2_t$'s move and describe a variety  $$Y^3=\overline{\bigcup_{t\in U\subseteq W}\PP^2_t}$$ of dimension three, which is an irreducible component of $\Sing(\Tan(X))$. Also the ruling of $\PP^2_t$'s is developable and such that $T_qY=\PP^3_t$ for $q\in \PP^2_t$ general. In particular $\Tan(Y)=\Tan(X)$.

From the parametrization of $T_z\Tan(X)$ for $z\in T_xX$ general in \eqref{diffG},  we deduce that $T^{(2)}_xX=T_z\Tan(X)$  for $z\in\PP^3_t$ general and $x\in C_t$ general. In particular $T_x^{(2)}X$ is constant along $C_t$ and equal to $\PP^4_t:=T_y^{(2)}Y$ with $y\in \PP^2_t$ general. Let $\mathcal G_X^{(2)}:X\map \mathbb G(4,N)$ be the second Gauss map of $X$, associating to a general $x\in X$ the point in $\mathbb G(4,N)$ corresponding to $T_x^{(2)}X$. Then $\overline{G_X^{(2)}(X)}=W$ and a dense open subset of $C_t$ is contained in  the fiber of $G_X^{(2)}$ over $t$, which is smooth by generic smoothness.  We claim that $C_t\subset \PP^2_t$.

Without loss of generality we can suppose $N=5$. By abusing notation we shall also indicate by $t$  a local parameter at a general point of the curve $W$. By the generality of $t\in W$ and of $x\in C_t$, we have that $t$ is a local equation of $C_t$ at $x$ because it coincides with the fiber of $G^{(2)}_X$ through $x.$  The hyperplane $\PP^4_t=T^{(2)}_xX$ cuts $X$ 
near $x$ along the divisor $3\cdot C_t$, which is locally defined at $x$ by the pull-back of the local equation $t^3$. Then the infinitesimally near $\PP^3_t=\PP^4_t\cap (\PP^4_t)'$ cuts $X$  near $x$ along the divisor defined by $\frac{\partial}{\partial t}t^3$, i.e. along an open subset of the divisor $2\cdot C_t$ containing $x$. Finally, $\PP^2_t=\PP^3_t\cap (\PP^3_t)'=\PP^4_t\cap (\PP^4_t)'\cap (\PP^4_t)^{''}$ cuts $X$  near $x$ along the divisor defined by $\frac{\partial^2}{\partial t^2}t^3$, i.e. along an open subset of $C_t$ containing $x$. The generality of $x$ assures that a general point
in each irreducible component of $C_t$ has a neighbourhood contained in $\PP^2_t$, proving the claim and thus concluding the proof.
\end{proof}

\begin{lem}\label{nonsing} Let $N\geq 5$ and let $X^2\subset Y^3\subset\PP^N$  be a smooth linearly normal non-degenerate irreducible surface  with $Y^3$ a developable scroll in planes. Then
$Y=S(0,0,N-2)$ and $X^2\subset \PP^N$ is a Roth surface of degree $d=b(N-2)+1$ with $b\geq 1$. 
\end{lem}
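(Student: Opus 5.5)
We use the classification of developable three-dimensional scrolls in planes recalled in Example \ref{cones:omega>1} (see also \S\ref{dev:fam}). Either $Y=S(L,Z)$ with $L$ a line and $Z$ a curve in $\PP^{N-2}$, or $Y=S(p,\Tan(Z))$ with $Z\subset\PP^{N-1}$, or $Y=T^{(2)}Z$ with $Z\subset\PP^N$ a curve. In each case $Z$ is an irreducible non-degenerate curve in the indicated space. The plan is to exclude the last two cases using the smoothness of $X$. In the first case we then apply Lemma \ref{lem:LinX} and Theorem \ref{Roth:Ilic}.

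\emph{Excluding $S(p,\Tan(Z))$ and $T^{(2)}Z$.} In both cases $Y$ is singular along a surface $Y^1$. For the cone this is $S(p,Z)$. For the osculating scroll it is $\Tan(Z)$, the union of the focal lines $\PP^1_t=\PP^2_t\cap(\PP^2_t)'$. Suppose the general plane $\PP^2_t$ cuts $X$ in a curve $C_t$ of degree $b$. If $b=1$, then $X$ would be a scroll whose tangent plane is constant along the rulings, so $X$ would be developable. Hence $\Tan(X)$ would be three-dimensional and $X$ would be degenerate or $\Tan(X)=\Tan(Y)$ of small dimension; we should still check this case against the hypotheses.

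So assume $b\ge 2$. Then $C_t$ meets the focal line $\PP^1_t$ in $b$ points, counted with multiplicity. Along the focal line, two infinitely near planes of $Y$ meet. Since $X$ is a Cartier divisor on $Y$ near such a point, the curves $C_t$ and $C_{t+dt}$ both pass through the points of $C_t\cap \PP^1_t$ up to first order. The union of these points is a curve $\Gamma\subset X\cap Y^1$, and along $\Gamma$ the tangent plane $T_xX$ must contain the tangent directions of both $C_t$ and of $\PP^1_t$ inside $T_xY$, the latter space being singular there. The cleanest route is probably the one used in Lemma \ref{lem:LinX}. Project from the point $p$ in the cone case, or use the family of focal lines in the osculating case. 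Then show that smoothness forces the focal locus to be contained in $X$. In the cone case this means $p\in X$, and every plane $\langle p,T_zZ\rangle$ through $p$ meets $X$ in a curve through $p$. This produces a one-dimensional family of curves through $p$ whose tangent lines at $p$ sweep the tangent directions of $\Tan(Z)$'s cone, which span more than a plane. This contradicts $\dim T_pX=2$. In the osculating case the focal line $\PP^1_t$ would lie in $X$ for general $t$, so $\Tan(Z)\subset X$ and hence $X=\Tan(Z)$, which is singular along $Z$. This is a contradiction.

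\emph{The cone $S(L,Z)$.} If $Z$ is a cone, meaning $Z$ is a line, then $Y$ is a linear $\PP^3$, contradicting non-degeneracy. Otherwise Lemma \ref{lem:LinX} applies with $n=2$. Since $X$ is linearly normal, $Z$ is a rational normal curve $S(N-2)\subset\PP^{N-2}$, so $Y=S(0,0,N-2)$. Theorem \ref{Roth:Ilic} then shows that $X$ is a Roth surface of degree $b(N-2)+1$.

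The main obstacle is the exclusion step, namely proving rigorously that smoothness of $X$ forces the focal locus of $Y$ into $X$ and that this leads to a contradiction. I would first try to mimic the blow-up argument of Lemma \ref{lem:LinX}: resolve the rational map $X\dashrightarrow$ (base curve of the planes) and analyse the exceptional curves over $X\cap Y^1$.
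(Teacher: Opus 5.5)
\textbf{Case $Y=S(L,Z)$.} Your treatment is the paper's. If $Z$ is a line, $Y$ is a $\PP^3$, contradicting non-degeneracy. Otherwise Lemma \ref{lem:LinX} and linear normality make $Z$ a rational normal curve and $Y=S(0,0,N-2)$, and Theorem \ref{Roth:Ilic} gives the degree.

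\textbf{Excluding $S(p,\Tan(Z))$ and $T^{(2)}Z$.} This is the real content of the lemma, and here there is a genuine gap: none of the steps you sketch holds.

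(i) $b=1$ does not make $X$ developable. Along a line $\ell\subset\PP^2_t$ the tangent planes of $X$ only have to contain $\ell$ and lie in the fixed $\PP^3_t=T_yY$. So they may move in the pencil of planes of $\PP^3_t$ through $\ell$. The $b=1$ Roth surfaces $S(1,N-2)\subset S(0,0,N-2)$ are non-developable. Hence the case $b=1$ is simply left open.

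(ii) $X$ cannot be Cartier on $Y$ at a point $x\in X\cap\Sing(Y)$ if $X$ is smooth, since otherwise $\dim t_xX\geq \dim t_xY-1\geq 3$. A smooth $X$ through $\Sing(Y)$ is necessarily non-Cartier there, as Roth surfaces are along $L$. So this cannot be used.

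(iii) The claim that smoothness forces the focal locus into $X$ is asserted, not proved, and the mechanism of Lemma \ref{lem:LinX} is unavailable. There, $L$ lies in every plane, so every plane curve $C_z$ must meet it. The focal line $\PP^1_t=\PP^2_t\cap(\PP^2_t)'$ of $T^{(2)}Z$ is not fundamental, so there is nothing to project from. For $S(p,\Tan(Z))$, a plane curve need not pass through the point $p$ of its plane. If $p\notin X$, then $\pi_p\colon X\to\Tan(Z)$ is just a finite morphism of degree $b$, and you give no reason this is impossible. Even granting $p\in X$, you would still need $p\in C_z$ for general $z$, which does not follow.

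\textbf{The missing idea.} The paper treats $T^{(2)}Z$ and leaves the cone to the reader. It works on the abstract $\PP^2$-bundle $\PP(\mathcal P^2(\mathcal L))$ over the normalization $\overline Z$ (with the analogous bundle for the cone). The map $g$ onto $Y$ is birational and ramifies along the divisor $\PP(\mathcal P(\mathcal L))$ lying over $\Tan(Z)$ (over $S(p,Z)$ for the cone). At such a point $r$, $(dg)_r$ has image $t_{g(r)}\PP^2_t$ and kernel a line transverse to the fibre.

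The strict transform $\overline X$ meets this divisor in a curve $D$, because every plane curve $A_t$ meets the focal line $\PP^1_t$. This, not any fundamental locus, is what brings $X$ into the singularities. If $g_{|\overline X}$ is not an immersion at some $r\in D$, then $X$ is singular at $g(r)$ by Zariski's Main Theorem.

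Note that this rank drop is not automatic at a given $r$, although the paper's sentence reads as if it were. It happens exactly when $\ker(dg)_r\subset t_r\overline X$, i.e.\ when $A_t$ is tangent at $g(r)$ to the curve $g(D)$. So a global count along $D$ is still needed to produce such a point. One such count is the degree $\deg(N_{\overline X}|_D)-\deg(\pi^*T_{\overline Z}|_D)$ of the degeneracy locus of the map from $t\overline X|_D$ to the relative tangent bundle of $\PP(\mathcal P^2(\mathcal L))$ over $\overline Z$.
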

\begin{proof} Let $Y^3=\Tan^{(2)}C\subset\PP^N$ with $C\subset\PP^N$ a non-degenerate irreducible curve. Let $\overline C$ be the normalization of $C$, let $\mathcal L$ be the pull-back of $\mathcal O_C(1)$ to $\overline C$, let $\mathcal P(\mathcal L)$ be the  sheaf of  principal parts of $\mathcal L$ on $\overline C$ and let $\mathcal P^2(\mathcal L)$ be the  sheaf of second order parts of $\mathcal L$ on $\overline C$. The latter two are locally free sheaves of ranks two and three. The bundle $\mathcal P(\mathcal L)$ can also be defined via the extension:
$$0\to \Omega_{\overline C}^1\otimes \mathcal L\to \mathcal P(\mathcal L)\to \mathcal L\to 0$$
corresponding to  $c_1(\mathcal L)\in H^1(\Omega^1_{\overline C})$. There exists  also
a surjective homomorphism  
$$ \mathcal P^2(\mathcal L)\to \mathcal P(L),$$
yielding an embedding $\PP(\mathcal P(L))\subset \PP(\mathcal P^2(\mathcal L))$. This is just the abstract realization of $\Tan(C)\subset \Tan^{(2)}C$.

If $\PP^N=\PP(V)$, the surjection $V\otimes \mathcal O_{\overline C}\to \mathcal L$ induced by pull-back of sections
yields surjections of $V\otimes \mathcal O_{\overline C}$ onto  $\mathcal P(\mathcal L)$ and  $\mathcal P^2(\mathcal L)$. The latter one
defines  a morphism $g:\PP(\mathcal P^2(\mathcal L))\to \PP^N$, whose image is $\Tan^{(2)}C$. The morphism $g$ is a bijection between the fibers
over a general  point of $\overline C$ and the corresponding osculating plane to $C\subset\PP^N$. The morphism $g$ ramifies along $\PP(\mathcal P(L))$, whose image via $g$ is $\Tan(C)$,
and $\Tan^{(2)}C$ has cuspidal singularities along $\Tan(C)$. Indeed, letting $\pi:\PP(\mathcal P^2(L))\to \overline C$ be the structural morphism, the reinterpretation  of the analysis in \S \ref{dev:fam} is that, taking a  point $r\in \PP^1_t\subset \PP^2_t=\pi^\inv(t)$, $t\in C$ general, the image of  $(dg)_r:t_r\PP(\mathcal P^2(\mathcal L))\to t_{g(r)}\PP^N$ is $t_{g(r)}\PP^2_t$ and the kernel of $(dg)_r$ maps isomorphically onto $t_{\pi(r)}\overline C$. 

Any smooth $\overline X\subset\PP(\mathcal P^2(\mathcal L))$, different from a $\PP^2_t$ from the ruling, dominates $\overline C$,
 intersects $\PP(\mathcal P(\mathcal L))$ in a curve $D$ and intersects $\PP^2_t$ in a smooth  plane curve $A_t$  with $t\in C$  general.
Let $r\in A_t\cap \PP^1_t$, $t\in C$ general (i.e. $r\in D$ general).  Then $t_r\overline X\cap t_r\PP^2_t=t_rA_t\subset t_r\PP^2_t$ so that  $t_r \overline X$ has one dimensional
image via $(dg)_r$ and $g(\overline X)$ will be singular at $g(r)$.
The case $Y=S(p,\Tan(C))$ is similar and left to the reader.

Suppose   $X\subset Y=S(L,C)\subset\PP^N$ with $C\subset\PP^{N-2}\subset\PP^N$ a  non-degenerate irreducible curve in its linear span and with
$L\subset\PP^N$ a line disjoint from $\langle C\rangle=\PP^{N-2}$. From Lemma \ref{lem:LinX} we deduce that $C\subset\PP^N$ is a rational normal
curve, that  $Y=S(0,0,N-2)$ and that $X$ is a Roth surface. These surfaces exist by Theorem \ref{Roth:Ilic}.
\end{proof}

We can now collect some relevant consequences of the previous results.

\begin{thm}\label{class:surf:taugeq2} Let $N\geq 5$ and let $X^2\subset \PP^N$ be a non-degenerate irreducible surface with $\tau(X)\geq 2$.
Then one of the following holds:

\begin{enumerate}
\item  $\varrho(X)=2$, $N=5$ and $X$ is projectively equivalent to a Veronese surface;
\medskip
\item

$\varrho(X)=3$ and $X^2\subset Y^3\subset\PP^N$
with $Y^3\subset\PP^N$ a developable scroll in planes which intersect the surface in curves of degree at least 2.
\end{enumerate}
\end{thm}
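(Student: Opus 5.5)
The argument is essentially a case split on the Gauss defect $\varrho(X)$ of $\Tan(X)$, assembling the preceding lemmas. Since $\tau(X)\geq 2>0$, we have $\dim(\Tan(X))=4$. By Proposition \ref{Contact locus of TX in X is 1-dimensional}, $\varrho(X)\geq 2$, where $\varrho(X)$ is the dimension of the closure $\PP^{\varrho(X)}_w$ of a general fiber of the Gauss map of $\Tan(X)$. This fiber is a linear space contained in $\Tan(X)$. If it had dimension $4$, then $\Tan(X)$ would be a $\PP^4$, contradicting the non-degeneracy of $X$ (since $N\geq 5$, $X$ would lie in a $\PP^4$). Hence $\varrho(X)\in\{2,3\}$.

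For $\varrho(X)=2$, Lemma \ref{varrho=2} applies directly: $N=5$, $X$ is projectively equivalent to the Veronese surface, and $\tau(X)=2$. This gives case (1).

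For $\varrho(X)=3$, Lemma \ref{varrho=3} applies. It gives $X\subset Y^3$, where $Y^3$ is the union of the focal planes $\PP^2_t=\PP^3_t\cap(\PP^3_t)'$ of the developable family of $\PP^3$'s ruling $\Tan(X)$. This $Y$ is a developable scroll in planes, and the curve $C_t\subset X$ from Proposition \ref{Contact locus of TX in X is 1-dimensional} is contained in $\PP^2_t$. It remains to check that the general plane cuts $X$ in a curve of degree at least two. Here I would use Example \ref{cones:omega>1}: if $X\cap\PP^2_t$ (its one-dimensional part) were a line, then $\tau(X)=1$, contrary to the hypothesis. Alternatively, by the remark after Proposition \ref{Contact locus of TX in X is 1-dimensional}, $C_t$ has either $\tau(X)\geq 2$ components or is irreducible of degree at least $2$, so $\deg(C_t)\geq 2$. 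Either way the intersection curve has degree at least $2$, which gives case (2).

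The main point to verify, and the only genuine obstacle, is the hypothesis of Example \ref{cones:omega>1}: that $X$ is non-developable, not contained in $\Sing(Y)$, and meets a general plane of $Y$ in a curve. Non-developability holds because $\dim\Tan(X)=4$. The curve condition follows from $C_t\subset\PP^2_t\cap X$ with $\dim C_t=1$ and $C_t$ not contained in the singular locus of $Y$ for $t$ general. That last fact is ensured by the construction in Lemma \ref{varrho=3}, where a neighbourhood of a general point of each component of $C_t$ lies in the plane $\PP^2_t$.
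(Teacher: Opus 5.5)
Your proposal is correct and follows the paper's route: the paper also uses $\dim(\Tan(X))=4$ and Proposition \ref{Contact locus of TX in X is 1-dimensional} to restrict to $\varrho(X)\in\{2,3\}$, then reads off the two cases from Lemmas \ref{varrho=2} and \ref{varrho=3}. The extra verification you flag as the main obstacle is not needed, since the degree-at-least-two conclusion is already the ``in particular'' part of Lemma \ref{varrho=3} (and your alternative argument via $\deg(C_t)\geq 2$ suffices in any case).
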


\begin{cor}\label{smooth:taugeq2} Let $N\geq 5$ and let $X^n\subset \PP^N$ be a smooth  linearly normal  irreducible surface with $\tau(X)\geq 2$.
Then one of the following holds:
\begin{enumerate}  
\item $\tau(X)=2$, $N=5$ and $X$ is projectively equivalent to the Veronese surface;
\medskip
\item $\tau(X)=b^2$ and 
$X\subset\PP^N$ is a Roth surface of degree $d=b(N-2)+1$ for some $b\geq 2$.
\end{enumerate}

In particular, a smooth linearly normal irreducible surface $X^2\subset\PP^N$ with $N\geq 5$, $\Tan(X)\subsetneq \Sec(X)$ and $\tau(X)\geq 2$
is a Roth surface of degree $d=b(N-2)+1$ with $b\geq 2$ and $\tau(X)=b^2$.
\end{cor}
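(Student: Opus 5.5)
The plan is to combine Theorem \ref{class:surf:taugeq2} with Lemmas \ref{varrho=2}, \ref{nonsing} and \ref{lem:tauXRoth}. Since $\tau(X)\geq 2$, Theorem \ref{class:surf:taugeq2} leaves two cases: $\varrho(X)=2$ or $\varrho(X)=3$.

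\emph{Case $\varrho(X)=2$.} Lemma \ref{varrho=2} directly gives $N=5$, that $X$ is projectively equivalent to $\nu_2(\PP^2)$, and that $\tau(X)=2$. This is alternative (1).

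\emph{Case $\varrho(X)=3$.} By Lemma \ref{varrho=3}, $X\subset Y^3$, where $Y^3$ is a developable scroll in planes whose general plane $\PP^2_t$ meets $X$ in a curve $C_t$ of degree at least two. Since $X$ is smooth and linearly normal, Lemma \ref{nonsing} forces $Y=S(0,0,N-2)=S(L,Z)$, with $Z$ a rational normal curve. It also shows that $X$ is a Roth surface of degree $d=b(N-2)+1$.

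\begin{itemize}
\item \emph{Identifying $b$.} By Lemma \ref{lem:LinX} we have $L\subset X$, so $\PP^2_z\cap X=L\cup C_z$ with $\deg(C_z)=b$. By Remark \ref{Roth:sec}, $C_z$ is smooth.
\item \emph{Ruling out $b=1$.} If $b=1$, then $C_z$ is a line and $\tau(X)=1$ by the discussion in Example \ref{cones:omega>1}. This contradicts $\tau(X)\geq 2$, so $b\geq 2$.
\item \emph{Computing $\tau$.} Lemma \ref{lem:tauXRoth} applies with $n=2$ and $a_1=N-2$, giving $\tau(X)=b^2$. This is alternative (2).
\end{itemize}

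One point needs care here. The curve of degree $\geq 2$ from Lemma \ref{varrho=3} is the full intersection $\PP^2_t\cap X=L\cup C_t$. To exclude $b=1$ I will not rely on that curve's degree. Instead I will use the $\tau=1$ criterion directly: $\tau(Z)=1$ for a rational normal curve, so formula \eqref{tau(X)tau} gives $\tau(X)=\tau$, the number computed on $C_z$. That number equals $1$ exactly when $C_z$ is a line.

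\emph{Final assertion.} Assume in addition $\Tan(X)\subsetneq\Sec(X)$. For the Veronese surface, $\Tan(X)=\Sec(X)$ is a cubic hypersurface (Remark \ref{lowTan}), so case (1) is excluded and only the Roth case with $b\geq 2$ and $\tau(X)=b^2$ remains.

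The only delicate points are checking that Lemma \ref{nonsing} applies, which needs smoothness and linear normality, both assumed, and justifying the smoothness of $C_z$, which comes from Theorem \ref{Roth:Ilic}.
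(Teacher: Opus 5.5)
Your proposal is correct and follows the paper's route. The paper's proof is just ``apply Lemma \ref{nonsing} to the conclusions of Theorem \ref{class:surf:taugeq2}'', and you supply the implicit steps: Lemma \ref{varrho=2} for the Veronese case, and Lemma \ref{lem:tauXRoth} (applied to the residual curve $C_z$) for $\tau(X)=b^2$. Your separate argument excluding $b=1$ via Example \ref{cones:omega>1} is harmless but redundant, since $\tau(X)=b^2\geq 2$ already forces $b\geq 2$.
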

\begin{proof} It is enough to apply Lemma \ref{nonsing} to the conclusions of Theorem \ref{class:surf:taugeq2}.
\end{proof}

The formula \eqref{degTan} below has been derived many times under much stronger assumptions, see for example \cite[Theorem 5.3]{Cattaneo}.

\begin{cor}\label{cor:PX} Let $N\geq 5$ and let $X^2\subset\PP^N$ be a smooth irreducible non-degenerate surface and let  $$\widetilde q:\PP(\mathcal P_X)\to \Tan(X)\subset\PP^N$$ be the tautological morphism.
Then the following conditions are equivalent:
\vskip 0.2cm
\begin{enumerate}
\item either $X\subset\PP^5$ is a Veronese surface or $X\subset\PP^N$ is (the projection of) a Roth surface.
\vskip 0.2cm
\item $\widetilde q$ is not birational.
\end{enumerate}

In particular, if  $X^2\subset\PP^N$ is a smooth  linearly normal surface, not a Roth or a Veronese surface, and if $H\subset X$
is a hyperplane section, then 
\begin{equation}\label{degTan}
\deg(\Tan(X))=6\deg(X)+4H\cdot K_X +2K_X^2-12\chi(\mathcal{O}_X).
\end{equation}
\end{cor}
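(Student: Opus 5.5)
We need to prove the equivalence (1)$\Leftrightarrow$(2) and then the degree formula for $\Tan(X)$. First note that for a smooth non-degenerate surface with $N\geq 5$ the family of tangent planes is non-degenerate, so $\dim(\Tan(X))=4$. Indeed, otherwise $X$ would be developable, hence a cone or the tangent surface of a curve, hence singular. Moreover, $\widetilde q:\PP(\mathcal P_X)\to\Tan(X)$ has degree exactly $\tau(X)$, since $\mathcal G_X$ is birational for a smooth non-developable variety. So (2) is equivalent to $\tau(X)\geq 2$.

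For (2)$\Rightarrow$(1), assume $\tau(X)\geq 2$. Theorem \ref{class:surf:taugeq2} then gives two cases. Either $X$ is a Veronese surface, or $X\subset Y^3$ with $Y$ a developable scroll in planes cutting $X$ in curves of degree at least two. If $X$ is not linearly normal, we replace it by its linearly normal embedding $X'\subset\PP^{N'}$, of which $X$ is an isomorphic projection. We must check that the second alternative lifts to $X'$. The tangent degree can only grow under the projection, but we need the converse control. We use the plane curves $C_w$ of Proposition \ref{Contact locus of TX in X is 1-dimensional}: $\langle C_w\rangle$ is a plane, and the span of a plane curve of degree at least two is preserved under isomorphic projection. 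So $X'$ also contains a one-dimensional family of plane curves of degree at least two whose planes sweep out a developable threefold; here we use that the osculating data along $C_w$ is determined intrinsically by the principal parts. Then Lemma \ref{nonsing} applied to $X'$ shows that $X'$ is a Roth surface (or $X'=X$ is the Veronese surface, since the Veronese surface admits no isomorphic projections with $N\geq 5$). This lifting to the linear normalization is the step I expect to be the main obstacle. The route I would try first is to argue directly that the planes $\langle C_w\rangle$ come from $\mathcal P_X$ restricted to $C_w$, and so are intrinsic.

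For (1)$\Rightarrow$(2), the Veronese surface has $\tau=2$ by Lemma \ref{varrho=2}, or directly because it is a $QEL$-surface with conic entry loci. For a Roth surface with $b\geq 2$, Lemma \ref{lem:tauXRoth} gives $\tau=b^2\geq 4$. We also have to cover $b=1$: there $X$ is a rational normal scroll and $\tau=1$, so the statement implicitly means Roth surfaces with $b\geq 2$, and we read it that way. For a projection $X_\Lambda$ of such a surface, the tangent planes project, $\Tan(X_\Lambda)=\Tan(X)_\Lambda$, and the fibre of $\widetilde q$ over a general point can only increase. Hence $\tau(X_\Lambda)\geq\tau(X)\geq 2$.

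Finally, for the formula \eqref{degTan}: if $X$ is linearly normal and neither Roth nor Veronese, the equivalence gives $\tau(X)=1$. Then $\deg(\Tan(X))=\omega_2(X)=\mathcal O_{\PP(\mathcal P_X)}(1)^4$ by \eqref{SegreclasseFormula}, which equals the second Segre class $s_2(\mathcal P_X)=c_1^2-c_2$. From \eqref{PXext} we get $c_1(\mathcal P_X)=K_X+3H$ and $c_2(\mathcal P_X)=c_2(\Omega^1_X(1))+c_1(\Omega^1_X(1))\cdot H=c_2(X)+K_X\cdot H+H^2+(K_X+2H)\cdot H=c_2(X)+2K_X\cdot H+3H^2$. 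Hence $s_2=K_X^2+6K_X\cdot H+9H^2-c_2(X)-2K_X\cdot H-3H^2=6\deg(X)+4H\cdot K_X+K_X^2-c_2(X)$. Noether's formula $c_2(X)=12\chi(\mathcal O_X)-K_X^2$ then yields \eqref{degTan}.
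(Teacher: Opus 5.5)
Your route is the paper's. Since $\mathcal G_X$ is birational, $\deg\widetilde q=\tau(X)$, so (2) means $\tau(X)\geq 2$. The equivalence then comes from Theorem \ref{class:surf:taugeq2} and Lemma \ref{nonsing} (that is, Corollary \ref{smooth:taugeq2}) together with Lemma \ref{lem:tauXRoth}. The formula comes from $\deg(\Tan(X))=\omega_2(X)=\mathcal O_{\PP(\mathcal P_X)}(1)^4=c_1^2-c_2$.

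Several of your details are right:
\begin{itemize}
\item Your Chern class computation is correct and supplies the ``standard computation'' the paper omits; it returns $\omega_2=6$ for the Veronese surface, as it should.
\item Reading (1) with $b\geq 2$ is right: for $b=1$ the Roth surface is $S(1,N-2)$, which has $\tau=1$.
\item (1)$\Rightarrow$(2) for isomorphic projections is fine, since $\mathcal P_{X_\Lambda}\simeq\mathcal P_X$ and $\widetilde q_{X_\Lambda}=\pi_\Lambda\circ\widetilde q_X$.
\end{itemize}

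The genuine gap is the one you flagged: lifting (2)$\Rightarrow$(1) to the linear normalization $X'\subset\PP^{N'}$. The paper's one-line proof also passes over this, since Corollary \ref{smooth:taugeq2} is stated only for linearly normal surfaces.

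The plane curves do lift. A plane curve $C$ of degree $\geq 2$ has $h^0(\mathcal O_C(1))=3$, so its isomorphic copy in $X'$ spans a plane. But developability of the lifted planes $P'_t$ does not follow from your remark that the principal parts are intrinsic. The bundle $\mathcal P_X|_{C_t}$ is intrinsic, but the span of $\bigcup_{x\in C_t}T_xX$ is governed by the image of $H^0(\PP^N,\mathcal O(1))$ in $H^0(C_t,\mathcal P_X|_{C_t})$. That image can grow when you pass to the complete linear system. Equivalently, projection can only lower $\dim\mathcal L(a_i,a_i')$, so the value $4$ downstairs still allows $5$ or $6$ upstairs.

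To close the gap, proceed as follows.
\begin{itemize}
\item Apply to $X$ itself the parts of Lemma \ref{nonsing} and Lemma \ref{lem:LinX} that use only smoothness and non-degeneracy. They give $Y=S(L,C)$ with $L\subset X$.
\item Then $L\cup C_t\subset P_t$ is a plane curve of degree $\geq 3$. By the same $h^0$ count, its isomorphic copy $L'\cup C'_t\subset X'$ lies in a plane.
\item Hence every $P'_t$ contains the fixed line $L'$, and $X'$ lies in a cone $S(L',Z')$, which is automatically a developable scroll in planes.
\item The last paragraph of the proof of Lemma \ref{nonsing} then shows that $X'$ is a Roth surface, with $\deg C'_t=\deg C_t\geq 2$.
\end{itemize}
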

\begin{proof} The first part is clear by Corollary \ref{smooth:taugeq2}. Hence the hypothesis in the second part assure $\tau(X)=1$ so we have
$$\deg(\Tan(X))=\omega_2(X)=\deg(\mathcal O_{\PP(\mathcal P_X)}(1)^4)$$
by \eqref{SegreclasseFormula}. Then formula  \eqref{degTan} follows
by a standard computation of Chern/Segre classes, taking into account the exact sequence \eqref{PXext}.
\end{proof}

\bibliographystyle{plain}
\bibliography{bib}{}

\begin{thebibliography}{123}

\bibitem{Catanese}
F.~Catanese, 
\newblock {\it On Severi’s proof of the double point formula}, 
\newblock Comm. Algebra {\bf 7} (1979), 763--773.

\bibitem{Cattaneo}
A.~Cattaneo, 
\newblock {\it On the degree of the tangent and of the secant variety to a projective surface}, 
\newblock Adv. Geom. {\bf 20} (2020), 233--248.

\bibitem{Chiantini.Ciliberto.wd}
L.~Chiantini, C.~Ciliberto,
\newblock {\it Weakly defective varieties}
\newblock Trans. Amer. Math. Soc. {\bf 354} (2001), 151--178.

\bibitem{Chiantini.Ciliberto.korder}
L.~Chiantini, C.~Ciliberto,
\newblock {\it On the concept of $k$-secant order of a variety}
\newblock J. London Math. Soc. {\bf 73} (2006), 436--454.


\bibitem{Chiantini.Ciliberto.Russo}
L.~Chiantini, C.~Ciliberto, F.~Russo
\newblock {\it On secant defective varieties, in particular of dimension 4}
\newblock Math. Z. {\bf 311} (2025), article 51,  https://doi.org/10.1007/s00209-025-03836-1


\bibitem{Ciliberto.Russo.2006}
C.~Ciliberto, F.~Russo,
\newblock {\it Varieties with minimal secant degree and linear systems of
              maximal dimension on surfaces}
\newblock Adv. Math. {\bf 200} (2006), 1--50.



\bibitem{Ciliberto.Sernesi.2010}
C.~Ciliberto, E.~Sernesi,
\newblock {\it Projective geometry related to the singularities of theta divisors of
  {J}acobians},
\newblock Boll. Unione Mat. Ital. {\bf 9} (2010), 93--109.

\bibitem{EH} D.~Eisenbud, J.~Harris,
\newblock {\it On varieties of minimal degree}, 
\newblock Alg. Geometry Bowdin 1985, 
Proc. Symp. in Pure Math. {\bf 46} (1987), 3--13.

\bibitem{Fischer.Piontkowski.Book2001}
G.~Fischer, J.~Piontkowski,
\newblock {\it Ruled varieties. An introduction to algebraic differential geometry},
\newblock Advanced Lectures in Mathematics. Friedr. Vieweg \& Sohn,
  Braunschweig, 2001.

\bibitem{Griffiths.Harris.1979}
P.~Griffiths, J.~Harris,
\newblock {\it Algebraic geometry and local differential geometry},
\newblock Ann. Sci. \'Ecole Norm. Sup. {\bf 12} (1979), 355--452.

\bibitem{Ilic.1998}
B,~Ilic,
\newblock {\it Geometric properties of the double-point divisor},
\newblock Trans. Amer. Math. Soc. {\bf 350} (1998), 1643--1661.


\bibitem{IR}
P.~Ionescu, F.~Russo,
\newblock {\it Conci-connected manifolds},
\newblock J. reine angew. Math. (Crelle's Journal)  {\bf 644}  (2010), 145--157.


\bibitem{Italiani}
M.~Italiani,
\newblock {\it Sulle congruenze di piani di $S_4$},
\newblock Boll. Unione Mat. Ital.  {\bf 13} (1958), 105--111.


\bibitem{Johnson}
K.W.~Johnson,
\newblock {\it Immersion and embedding of algebraic varieties},
\newblock Acta Math.  {\bf 140} (1978), 49--74.

\bibitem{Pedreira}
M.~Pedreira, L.~Sol\' a--Conde,
\newblock {\it Classification of congruences of planes in $\PP^4_\mathbb C$. I.},
\newblock Geom. Dedicata  {\bf 85}  (2001), 69--83.

\bibitem{PR1}
L.~Pirio, F.~Russo,
\newblock {\it Varieties $n$ covered by curves of degree $\delta$},
\newblock Comm. Math. Helv.   {\bf 88}  (2013), 715--757.

\bibitem{PR2}
L.~Pirio, F.~Russo,
\newblock {\it The $XJC$-correspondence},
\newblock J. reine angew. Math. (Crelle's Journal)  {\bf 716}  (2016), 229--250.

\bibitem{PR3}
L.~Pirio, F.~Russo,
\newblock {\it Quadro-quadric Cremona maps and varieties 3-connected by cubics: the semi--simple part and the radical},
\newblock Int. J. Math  {\bf 24} no. 13 (2013), 1350105 (33 pages).


\bibitem{Roth}
L.~Roth,
\newblock {\it On the projective classification of surfaces},
\newblock Proc. London Math. Soc.  {\bf 42} (1937), 142--170.



\bibitem{Russo.Book2016}
F.~Russo.
\newblock {\it On the geometry of some special projective varieties},
  Lecture Notes of the Unione Matematica Italiana, vol.~18,
\newblock Springer, 2016.

\bibitem{Segre1}
C.~Segre,
\newblock {\it Preliminari di una teoria delle varietà luogo di spazi},
\newblock Rend. Circolo Mat. Palermo   {\bf 30} (1910), 87--121.

\bibitem{Segre2}
C.~Segre,
\newblock {\it Sui fuochi di $2^o$ ordine dei sistemi infiniti di piani e sulle curve iperspaziali con una doppia infinità di piani plurisecanti},
\newblock Rend. R. Acc. Naz. Lincei    {\bf 30} (1921), 67--71.



\bibitem{Severi}
F.~Severi,
\newblock {\it Sulle intersezioni delle varietà algebriche e sopra i loro caratteri
e singolarità proiettive},
\newblock  Mem. Accad. Scienze di Torino, S. II, {\bf 52} (1902), 61--118.

\bibitem{Terracini} 
A.~Terracini,  
\newblock{\it Alcune questioni sugli
spazi tangenti e osculatori ad una variet\' a}, I, II, III,
\newblock Selecta Alessandro Terracini, pages 24--90, Edizioni Cremonese, Roma,
1968.

\bibitem{Vermeire}
P.~Vermeire, {\it Some results on secant varieties leading to a geometric flip
  construction}, Comp. Math. {\bf 125} (2001), 263--282.

\end{thebibliography}
\end{document}